     \numberwithin{equation}{section}
     \theoremstyle{definition}
     \newtheorem{definition}{Definition}[section]
     \newaliascnt{example}{definition}
     \newtheorem{example}[example]{Example}
     \newaliascnt{problem}{definition}
     \theoremstyle{definition}
     \newaliascnt{theorem}{definition}
     \newtheorem{theorem}[theorem]{Theorem}
     \newaliascnt{lemma}{definition}
     \newtheorem{lemma}[lemma]{Lemma}
     \newaliascnt{corollary}{definition}
     \newtheorem{corollary}[corollary]{Corollary}
     \crefname{definition}{Definition}{Definitions}
     \crefname{example}{Example}{Examples}
     \crefname{problem}{Problem}{Problems}
     \crefname{theorem}{Theorem}{Theorems}
     \crefname{lemma}{Lemma}{Lemmas}
     \crefname{corollary}{Corollary}{Corollaries}
     \newcommand{\C}{\mathbb{C}} 
     \newcommand{\geh}{\mathcal{G}}
     \newcommand{\injten}{\otimes_{\varepsilon}}
     \newcommand{\projten}{\otimes_{\pi}}
     \newcommand{\etale}{\'{e}tale } 
     \newcommand{\inv}{^{-1}}
     \newcommand{\mbraket}[2]{\left\langle #1, #2 \right\rangle}
     \newcommand{\mnorm}[1]{\left\| #1 \right\|}
     \newcommand{\mabs}[1]{\left| #1 \right|}
     \newcommand{\p}[1]{\left( #1 \right)}
     \newcommand{\xto}[1]{\xrightarrow{\: #1 \:}}
     \newcommand{\e}{\varepsilon}
     \newcommand{\act}{\curvearrowright} 
     \newcommand{\tca}{\curvearrowleft}
     \newcommand{\Forall}{\text{ for any }}
     \DeclareMathOperator{\Ad}{Ad}
     \newcommand{\kaigyo}{\textcolor{white}{.}}  
\title{Amenability of group actions on compact spaces and the associated Banach algebras}
\author{Hikaru Awazu}
\begin{document}

\renewcommand{\thefootnote}{\fnsymbol{footnote}}
\footnote[0]{The University of Tokyo, Graduate School of Mathematical Sciences}
\renewcommand{\thefootnote}{\arabic{footnote}}

\begin{abstract}
     For a topological group $G$, amenability can be characterized by the amenability of the convolution Banach algebra $L^1(G)$. 
     Here a Banach algebra $A$ is called amenable 
     if every bounded derivation from $A$ into any dual--type $A$--$A$--Banach bimodule is inner. 

     We extend this classical result to the case of discrete group actions on compact Hausdorff spaces 
     in our main theorem \autoref{thm:main}. 
     By introducing a Banach algebra naturally associated with the action 
     and adopting a suitably weakened notion of amenability for Banach algebras, 
     we obtain an analogous characterization of amenable actions. 

     As a lemma, we also proved a fixed--point property 
     for amenable actions in \autoref{thm:Awazu fixed point} that strengthens the theorem of Dong and Wang (2015). 
\end{abstract}

\maketitle 

\tableofcontents

\section{Introduction and basic definitions}\label{sec:introduction_and_basic_definitions}


\subsection{Notations} 

     \hspace{5mm}

     \begin{itemize}

     \item In this article, let $X$ be a compact Hausdorff space. 

     Denote by $(CX,\| \cdot \|_{\infty} )$ the $C^{*}$--algebra of $\mathbb{C}$--valued continuous functions on $X$. 

     Throughout, we work over $\C$ as the scalar field for vector spaces.

     \item Let $\Gamma$ be a discrete group, and suppose that $\Gamma$ acts continuously on $X$.

     We denote the action by $g.x$ for $g \in \Gamma$ and $x\in X$. 

     The symbol $e\in\Gamma$ denotes the unit of $\Gamma$. 

     \item Then $\Gamma$ acts on $CX$ isometrically and denote it as $p^g$ for $g\in \Gamma$ and $p\in CX$. 

     Here $p^g$ is defined by $p^g(x) = p(g^{-1}.x)$ \; .

     \item $\mathrm{Prob}(\Gamma)$ is the closed cone of $\ell ^1 (\Gamma)$ consisting of norm--one, positive and unital elements. 

     It has the natural left action of $\Gamma$ defined by 
     \[
     g.f(h) := f(g^{-1}h) \quad \text{for } g,h \in \Gamma,\: f\in \mathrm{Prob}(\Gamma) 
     \]

     and is equipped with $\ell ^1$--norm $\|\cdot\|_1$. 

     \item Topological groups are always assumed to be second--countable, Hausdorff, and locally compact.   

     \end{itemize}


\subsection{Amenable groups and amenable actions}
     \hspace{5mm}   	

     First we review (topological) amenable groups: 

     \begin{definition}\label{def:amen groups}

     Let $G$ be a topological group with its Haar measure $\mu$. 

     We say that $G$ is \textit{amenable} if there exists $\phi \in L^{\infty} (G,\mu) ^{*}$ 
     which is positive, unital, and left $G$--invariant, i.e., 
     \[
     \phi (g.f) = \phi(f)  \text{ for } g\in G, \: f \in L^{\infty} (G,\mu) \; .
     \] 

     We call this $\phi$ a \textit{left invariant mean} on $G$. 

     \end{definition}

     Amenability of groups has many characterizations and is widely used for analyzing groups by operator algebraic techniques. 
     Among these, one can easily find the following using $\ell^{\infty} (\Gamma)^{*} = \ell^1 (\Gamma) ^{**}$: 

     \begin{theorem}\label{thm:approximate mean of group}

     The group $\Gamma$ is amenable if and only if it has a norm--$1$ net $(f_i)_i$ in $\ell^1 (\Gamma)$ 
     satisfying 
     \[
     \Forall g \in \Gamma, \quad \|f_i \ast \delta_g -f_i \|_1 \xrightarrow{\hspace{5mm} i\hspace{5mm}} 0 \; .
     \]

     \end{theorem}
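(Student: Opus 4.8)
The plan is to prove both directions of the equivalence by identifying when $\Gamma$ admits a left-invariant mean with the existence of an approximately invariant net of probability densities. The key analytic tool is the duality $\ell^{\infty}(\Gamma)^{*} = \ell^1(\Gamma)^{**}$ together with the weak-$*$ density of the unit ball of $\ell^1(\Gamma)$ in the unit ball of its bidual (Goldstine's theorem), and a standard convexity argument to pass from weak convergence to norm convergence.

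\begin{proof}[Proof sketch]
First I would prove the easier implication. Suppose $\Gamma$ has a norm-$1$ net $(f_i)_i$ in $\ell^1(\Gamma)$ with $\|f_i \ast \delta_g - f_i\|_1 \to 0$ for every $g \in \Gamma$. Replacing each $f_i$ by $|f_i|/\||f_i|\|_1$ I may assume each $f_i$ lies in $Prob(\Gamma)$, since passing to absolute values does not increase the norm of the difference. Regard each $f_i$ as an element of $\ell^{\infty}(\Gamma)^{*}$ by $\phi_i(F) = \sum_{h} f_i(h) F(h)$. The net $(\phi_i)$ lies in the weak-$*$ compact unit ball of $\ell^{\infty}(\Gamma)^{*}$, so it has a weak-$*$ cluster point $\phi$, which is automatically positive and unital because each $\phi_i$ is. The invariance estimate shows that for each fixed $g$ and $F \in \ell^{\infty}(\Gamma)$ the quantity $|\phi_i(g.F) - \phi_i(F)|$ is bounded by $\|F\|_{\infty}\,\|f_i \ast \delta_g - f_i\|_1 \to 0$, so $\phi$ is left-invariant and hence a left-invariant mean, giving amenability.

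For the converse, suppose $\Gamma$ is amenable with left-invariant mean $\phi \in \ell^{\infty}(\Gamma)^{*} = \ell^1(\Gamma)^{**}$. By Goldstine's theorem there is a net $(g_j)_j$ in the unit ball of $\ell^1(\Gamma)$ converging weak-$*$ to $\phi$; again taking absolute values and normalizing arranges $g_j \in Prob(\Gamma)$. Invariance of $\phi$ translates into the statement that for each fixed $g$ the net $g_j \ast \delta_g - g_j$ converges to $0$ in the weak topology $\sigma(\ell^1, \ell^{\infty})$ of $\ell^1(\Gamma)$. This is the crux of the argument, and the main obstacle is converting this weak convergence into the required norm convergence. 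The standard device is a convexity (Mazur) argument: consider the product net and form the convex combinations, using that the weak closure and the norm closure of a convex set in $\ell^1(\Gamma)$ coincide, to extract from suitable convex combinations of the $g_j$ a new net $(f_i)_i \subset Prob(\Gamma)$ with $\|f_i \ast \delta_g - f_i\|_1 \to 0$ simultaneously for every $g$. One runs this over the directed set of finite subsets $S \subset \Gamma$ paired with $\varepsilon > 0$, obtaining for each $(S,\varepsilon)$ a single $f_{S,\varepsilon}$ with $\max_{g \in S}\|f_{S,\varepsilon} \ast \delta_g - f_{S,\varepsilon}\|_1 < \varepsilon$, which assembles into the desired net.
\end{proof}
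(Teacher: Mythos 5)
Your overall architecture---a weak-$*$ cluster point for one implication, Goldstine plus a Mazur convexity argument over the directed set of pairs $(S,\varepsilon)$ for the other---is exactly the standard argument that the paper alludes to when it says the theorem ``can easily be found'' from $\ell^{\infty}(\Gamma)^{*}=\ell^{1}(\Gamma)^{**}$. However, both halves of your proof contain the same concrete error: a confusion of left and right translations. With the paper's conventions, $f\ast\delta_g(h)=f(hg^{-1})$ is a \emph{right} translation, while amenability is defined by a \emph{left} invariant mean, i.e.\ invariance under $g.F(h)=F(g^{-1}h)$; the duality pairing gives
\[
\phi_i(g.F)-\phi_i(F)\;=\;\sum_{k\in\Gamma}\bigl(f_i(gk)-f_i(k)\bigr)F(k),
\qquad\text{hence}\qquad
\sup_{\|F\|_\infty\le 1}\bigl|\phi_i(g.F)-\phi_i(F)\bigr|\;=\;\bigl\|\delta_{g^{-1}}\ast f_i-f_i\bigr\|_1,
\]
which is a \emph{left} translation difference. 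Your claimed bound by $\|f_i\ast\delta_g-f_i\|_1$ is simply false: in $\Gamma=S_3$, taking $f=\tfrac13(\delta_e+\delta_{(13)}+\delta_{(123)})$ and $g=(12)=g^{-1}$, one computes $\|f\ast\delta_g-f\|_1=\tfrac23$ while $\|\delta_{g}\ast f-f\|_1=2$. Consequently the weak-$*$ cluster point of a right approximate mean is a \emph{right} invariant mean, not a left invariant one; symmetrically, in your converse direction a left invariant mean only yields $\delta_g\ast g_j-g_j\to 0$ weakly, not $g_j\ast\delta_g-g_j\to 0$, so the Mazur argument produces the wrong kind of approximate invariance.

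The missing ingredient is the inversion involution $\check f(h):=f(h^{-1})$, an isometry of $\ell^{1}(\Gamma)$ satisfying $(\delta_g\ast f)^{\vee}=\check f\ast\delta_{g^{-1}}$, together with its dual $\check\phi(F):=\phi(\check F)$, which exchanges left and right invariant means. Inserting it at the appropriate point in each direction (replace $f_i$ by $\check f_i$ before taking the cluster point in the first half; convert the Mazur output $\|\delta_g\ast f_i-f_i\|_1\to 0$ into $\|\check f_i\ast\delta_{g^{-1}}-\check f_i\|_1\to 0$ in the second) repairs both halves. Everything else in your sketch is sound: the reduction to $Prob(\Gamma)$ via absolute values is compatible with the relevant estimates because $|f|\ast\delta_g=|f\ast\delta_g|$ and $\bigl||a|-|b|\bigr|\le|a-b|$ pointwise, and running Mazur in the product space $\bigoplus_{g\in S}\ell^{1}(\Gamma)$ correctly handles the simultaneity over finite sets $S$.
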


     This $(f_i)_i$ is called a \textit{right approximate mean} for $\Gamma$.

     In 2000, this characterization of amenability was extended for discrete group actions on topological spaces by Anantharaman-Delaroche \cite{AD00}. 

     \begin{definition}\label{def:amenable action}
     We say that the topological action $\gamma \act X$ is \textit{amenable} 
     if there exists a net $( m_i )_i$ in $C(X,\mathrm{Prob}(\Gamma))$ satisfying 

     \[
     \Forall g \in \Gamma,\quad \sup\limits_{x\in X} \| m_i(g.x) - g.(m_i(x)) \|_1 \xrightarrow{\hspace{5mm} i\hspace{5mm}} 0 \; .
     \]

     \end{definition}

     Amenability of group actions is also used in operator--theoretic research on (discrete) groups. 
     There is a large class of groups called \textit{exact groups} 
     and it had been a difficult problem to construct one an example of a NON--exact group 
     until Gromov constructed in \cite{Gromo03}. 

     In 2000, Ozawa showed that a discrete group $\Gamma$ is exact 
     if and only if the action of $\Gamma$ on its Stone--\v{C}ech compactification $\beta \Gamma$ is amenable \cite{Ozawa00}.  
     Therefore, amenable actions are particularly used for analysing exact groups. 

     \hspace{5mm}


\subsection{Amenable Banach algebras and Johnson's theorem}
     \kaigyo 

     First, we introduce the definition of amenable Banach algebras.

     \begin{definition}\label{def:amenable Banachalg}
      For a Banach algebra $A$, we make definitions as follows. 

     \begin{enumerate}

     \item We say that a Banach space $E$ is an $A$--$A$--\textit{Banach bimodule} 

     when $E$ has left and right contractive actions of $A$.  

     \item Then $E^{*}$ also has the structure of an $A$--$A$--Banach bimodule by letting 

     \[
     a.\phi .b (v) \: := \: \phi (b.v.a)
     \]
     for $a,b \in A$, $v\in E$, $\phi \in E^{*}$. 

     \item We say that $D:A \rightarrow E$ is a \textit{derivation} on $E$ if $D$ is bounded and linear, and 

     \[
     D(ab) = a.D(b) + D(a).b 
     \]
     for all $a,b \in A$. 

     \item We say the derivation $D$ is \textit{inner} if there exists $v \in E$ such that 
     \[
     D(a) = a.v - v.a \; .
     \] 
     The right side is denoted by $ad_v (a)$. 

     \item We say that $A$ is an \textit{amenable Banach algebra} 
     if for any $A$--$A$--bimodule $E$ and for any derivations $D:A \rightarrow E^{*}$, $D$ is inner. 

     \end{enumerate}

     We remark that $D(1_A) = 0 $ if $D$ is a derivation on $A$. 

     \end{definition}

     Johnson proved the connection among amenable groups, amenable Banach algebras and vanishing of bounded cohomologies of (discrete) groups. 

     The bounded cohomology $ \{ H_b ^n (\Gamma ;V) \}_n $ of $\Gamma$ 
     with $\mathbb{C}[\Gamma]$-module coefficient $V$ is a variation of the ordinary group cohomology.  
     This is obtained by restricting cochains of the group cohomology to uniformly bounded ones 
     with respect to the norm $\|\cdot\|_V$.  
     For precise descriptions, see \cite{Frige16}. 

     \begin{theorem}[Johnson]\label{thm:Johnson} (\cite{Johns72}, Theorem 2.1.10 of \cite{Runde20}, Section 3.4 in \cite{Frige16})
     
     For a topological group $G$, the following statements are equivalent: 

     \begin{enumerate} 

     \item The group $G$ is amenable. 

     \item The Banach algebra $L^1(G,\mu)$ equipped with the convolutional product $\ast$ is amenable. 

     \item[] Moreover, when $G$ is discrete, the following are also equivalent;

     \item $ H_b ^1(G; (\ell ^{\infty}(G) / \mathbb{C})^*) = 0 $

     \item For all $\mathbb{C}[\Gamma]$-module $V$ and for all $n \ge 1$, we have $H_b ^n (G; V^{*}) = 0$ \; .

     \end{enumerate}

     \end{theorem}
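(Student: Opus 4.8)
The plan is to prove the cycle of equivalences by first settling the central equivalence (1) $\Leftrightarrow$ (2) for general locally compact $G$, and then connecting the discrete-case statements (3) and (4) to amenability through the identification of the Hochschild cohomology of the convolution algebra $\ell^1(\Gamma)$ with the bounded cohomology of $\Gamma$. The main tool I would set up first is the approximate-diagonal criterion: a Banach algebra $A$ with a bounded approximate identity is amenable precisely when it admits a bounded net $(m_i)_i$ in $A \projten A$ — an \emph{approximate diagonal} — satisfying $a \cdot m_i - m_i \cdot a \to 0$ and $\pi(m_i)\, a \to a$ for every $a \in A$, where $\pi : A \projten A \to A$ is the multiplication map.

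For (1) $\Rightarrow$ (2) I would construct such a diagonal from an approximate mean. In the discrete case one has $\ell^1(\Gamma) \projten \ell^1(\Gamma) \cong \ell^1(\Gamma \times \Gamma)$, and starting from the approximate mean $(f_i)_i$ furnished by \autoref{thm:approximate mean of group} (after passing to the equivalent left-invariant version $\| \delta_g \ast f_i - f_i \|_1 \to 0$) I would set $m_i := \sum_{h \in \Gamma} f_i(h)\, \delta_h \otimes \delta_{h\inv}$. A direct computation gives $\pi(m_i) = \delta_e = 1$, while reindexing shows $\delta_g \cdot m_i - m_i \cdot \delta_g = \sum_k \bigl[(g.f_i)(k) - f_i(k)\bigr]\, \delta_k \otimes \delta_{k\inv g}$, whose $\ell^1$-norm equals $\| g.f_i - f_i \|_1 \to 0$; hence $(m_i)_i$ is an approximate diagonal and $\ell^1(\Gamma)$ is amenable. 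For non-discrete $G$ the same construction runs with a net of probability densities extracted from the invariant mean, with the unit $\delta_e$ replaced by a bounded approximate identity. The converse (2) $\Rightarrow$ (1) reverses the map: pushing an abstract approximate diagonal forward along the first-coordinate projection $A \projten A \to A$ produces an approximate mean $\mu_i$, and a weak* cluster point of $(\mu_i)_i$ inside $L^\infty(G)^*$ is a left-invariant mean, so $G$ is amenable.

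For the discrete statements I would identify the continuous Hochschild cohomology $\mathcal{H}^n(\ell^1(\Gamma), V^*)$ with the bounded cohomology $H_b^n(\Gamma; V^*)$ for every dual coefficient module $V^*$: using $\ell^1(\Gamma)^{\projten n} \cong \ell^1(\Gamma^n)$, bounded $n$-cochains $\ell^1(\Gamma)^{\projten n} \to V^*$ correspond exactly to bounded maps $\Gamma^n \to V^*$, and the Hochschild and bar differentials match, so the two cochain complexes and their cohomologies coincide. Under this identification, amenability of $\ell^1(\Gamma)$ — the vanishing of $\mathcal{H}^1(\ell^1(\Gamma), E^*)$ for all $E$ — is precisely statement (4) in degree one; a standard dimension-shifting argument for amenable algebras (embedding a dual module into a co-free dual module with computable cohomology) promotes degree-one vanishing to all degrees $n \ge 1$, yielding (2) $\Leftrightarrow$ (4). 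The implication (4) $\Rightarrow$ (3) is then the mere specialization to $n = 1$ and $V = \ell^\infty(\Gamma)/\C$.

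The remaining and, I expect, most delicate step is (3) $\Rightarrow$ (1): manufacturing a genuine invariant mean from the vanishing of a single bounded cohomology group. Here I would exhibit a canonical bounded $1$-cocycle attached to the translation action on $\ell^\infty(\Gamma)/\C$, morally the coboundary of a fixed state measuring the failure of left invariance, and invoke $H_b^1(\Gamma; (\ell^\infty(\Gamma)/\C)^*) = 0$ to realize it as an honest coboundary; unwinding the coboundary relation and lifting through the dual of the quotient $\ell^\infty(\Gamma) \to \ell^\infty(\Gamma)/\C$ produces a left-invariant mean. The two obstacles I anticipate are (i) the bookkeeping that keeps the cocycle in (3) bounded and forces its primitive to be a \emph{state} rather than merely a bounded functional, and (ii) the analytic care needed for the non-unital algebra $L^1(G)$ in (1) $\Leftrightarrow$ (2) when $G$ is not discrete, where approximate diagonals and bounded approximate identities must systematically replace the clean unital computation above.
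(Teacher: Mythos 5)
Your proposal is correct in outline, but it takes a genuinely different route from the paper's own sketch in \autoref{sub:proof_of_johnson_s_theorem_and_fixed_point_theorem_for_amenable_groups}. There, the core equivalence (1)$\Leftrightarrow$(2) is run entirely through derivations: for (2)$\Rightarrow$(1) the paper builds the concrete derivation $D(f) = f.\tau_0 - \tau_0.f$ into $(\ell^\infty(\Gamma)/\C)^*$ and reads the invariant mean off the difference $\tau_0 - \tau_1$ of the reference functional and the implementing element, and for (1)$\Rightarrow$(2) it invokes Day's fixed point theorem (\autoref{thm:Day}) for the affine action $\alpha_g(\tau) = \delta_g.\tau.\delta_{g\inv} - D(\delta_g).\delta_{g\inv}$ on the weak*--compact convex hull of $\{D(\delta_g).\delta_{g\inv} \mid g\in\Gamma\}$. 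You instead run both directions through the approximate-diagonal criterion (stated in the paper only much later, as \autoref{thm:approx diagonals}): your diagonal $m_i = \sum_h f_i(h)\,\delta_h \otimes \delta_{h\inv}$, the commutator identity with norm $\|g.f_i - f_i\|_1$, and the first-marginal argument for the converse are all correct, and your treatment of (3) and (4) via the identification of Hochschild cohomology of $\ell^1(\Gamma)$ with bounded cohomology of $\Gamma$ covers ground the paper does not sketch at all. The trade-off is worth noting: the paper's derivation-plus-fixed-point proof is chosen deliberately, since it is exactly the argument the paper then generalizes to group actions (\autoref{thm:Awazu fixed point} and \autoref{thm:main} reuse the same affine action $\alpha$ and the same compact convex set), whereas your approximate-diagonal machinery is precisely what the paper's final section flags as resistant to that generalization, because $A_0(\Gamma,X)\projten A_0(\Gamma,X)$ need not be right--$CX$--$\ell^1$--geometric. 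One shared imprecision, present in the paper's sketch as well: the weak* cluster point of your marginals $\mu_i$ (like the paper's $\tau_0 - \tau_1$) is a priori only a left-invariant functional taking the value $1$ at $1_G$, not a positive one; one must still pass to an invariant \emph{state}, e.g.\ via $|\mu|/\|\mu\|$, using that translations are $*$-automorphisms of $\ell^\infty(\Gamma)$ so that the absolute value of an invariant functional remains invariant --- you flag this obstacle for your step (3)$\Rightarrow$(1), but it equally affects your (2)$\Rightarrow$(1).
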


\subsection{Proof of Johnson's Theorem and Fixed--Point Theorem for Amenable Groups}\label{sub:proof_of_johnson_s_theorem_and_fixed_point_theorem_for_amenable_groups}
     \kaigyo 

     We will give a sketch of the proof of \autoref{thm:Johnson} 
     because we obtained our main theorems by imitating this proof and it makes understanding our proof clear. 
     We work for the case that $G$ is discrete (and use the symbol $\Gamma$) for concise.  

     To show (2) $\Rightarrow$ (1) in \autoref{thm:Johnson}, 
     it suffices to construct a concrete derivation from $\ell ^1 (\Gamma)$ 
     and uses the fact that every derivation is inner. 

     \begin{itemize}

     \item We define the Banach space $E$ by 
     \[E := \ell ^{\infty} (\Gamma) /\mathbb{C} 1_G\]

     where $1_G \in  \ell ^{\infty} (\Gamma)$ is the constant--$1$ function.

     \item Then, 
     \[
     E^{*} \: \cong \: \{ \tau \in \ell ^{\infty} (\Gamma)^{*} \: | \: \tau (1_G) = 0 \} \; .
     \] 

     \item The left action $\ell ^1(\Gamma) \act E^{*} $ is defined by 
     \[
     f.\tau (\phi) := \tau (\phi .f) \quad 
     \text{for } \phi \in \ell ^{\infty} (\Gamma),\: f \in \ell ^1(\Gamma), \:\tau \in E^{*}
     \]

     where $\phi .f \in \ell ^{\infty} (\Gamma)$ is defined by $\phi .f (f') := \phi (f \ast f')$. 

     \item The right action $E^{*} \tca \ell ^1(\Gamma) $ is defined by 
     $\tau .f := \left(\sum\limits_{g \in \Gamma} f_g \right)\cdot \tau$. 

     \item Also, $\ell ^{\infty} (\Gamma)^{*} $ is an $\ell ^1(\Gamma)$--bimodule in a similar manner to the case of $E^{*}$. 

     \vspace{2mm}

     \item Fix $\tau _0 \in \ell ^{\infty} (\Gamma)^{*}$ such that $(\sum\limits_{g \in \Gamma} f_g) = 1$. 

     \vspace{2mm}

     \item Then, $f.\tau _0 - \tau _0 .f$ is in $E^{*}$ for any $f \in \ell ^1(\Gamma)$. 

     Therefore, a derivation $D: \ell ^1 (\Gamma) \rightarrow E^{*}$ can be defined by 
     \[
     D(f) := f.\tau _0 - \tau _0 .f \; .
     \] 

     \item Using amenability of $\ell ^1 (\Gamma)$, we obtain $\tau _1 \in E^{*}$ with 
     \[
     f.\tau _0 - \tau _0 .f = f. \tau _1 - \tau _1 .f \; .
     \] 

     \item Then, $\tau _0 - \tau _1 \: \in \ell ^{\infty} (\Gamma)^{*}$ is a desired left invariant mean for $\ell ^1(\Gamma)$.  

     \end{itemize}

     \hspace{5mm}

     To show (1) $\Rightarrow$ (2) in \autoref{thm:Johnson}, 
     we invoke Day's Fixed--point characterization of amenable groups \cite{Day61}, Theorem 1.5.1 in \cite{Runde20}. 

     \begin{theorem}[Day]\label{thm:Day}  For a locally compact group $G$, the following are equivalent: 

     \begin{enumerate}

     \item The group $G$ is amenable. 

     \item For any locally convex space $V$ and any nonempty compact convex subset $K$, 
     if $G$ acts affinely and separate--continuously on $K$, then $K$ has a $G$--fixed point. 

     \end{enumerate}

     Here the meaning of an affine action and separate coninuity is as follows: 

     \begin{itemize}

     \item Acting \textit{affinely} on $K$ means 
     $g.(tx + (1-t)y) \: = \: t g.x \: + (1-t) g.y$ is satisfied for all $x,y \in K$ and $t \in [0,1]$.

     \item Acting \textit{separately contimuous} on $K$ means 
     $G \times K \rightarrow K$ ;$(g,k) \mapsto g.k$ is separately continuous. 

     \end{itemize}
     \end{theorem}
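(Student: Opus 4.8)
The plan is to prove the two implications separately, with the bulk of the work in $(1)\Rightarrow(2)$, where a fixed point is manufactured by averaging an orbit against an invariant mean.

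For $(1)\Rightarrow(2)$, assume $G$ is amenable and fix a left-invariant mean $m$ on $L^\infty(G,\mu)$. Let $K$ be a nonempty compact convex subset of a locally convex space $V$ on which $G$ acts affinely and separately continuously. Choose any base point $x_0 \in K$. For each $\lambda \in V^*$ the orbit map $F_\lambda : g \mapsto \lambda(g.x_0)$ is continuous (by separate continuity) and bounded (since $\lambda(K)$ is bounded), hence $F_\lambda \in L^\infty(G,\mu)$. I would then seek a barycenter $\bar x \in K$ characterized by $\lambda(\bar x) = m(F_\lambda)$ for all $\lambda \in V^*$. To produce it, recall that $m$ is a weak-* limit of a net $(\mu_i)$ of absolutely continuous probability measures (densities in $L^1(G)$); setting $x_i$ to be the barycenter $\int_G (g.x_0)\, d\mu_i(g)$, which lies in the compact convex set $K$, and passing to a convergent subnet $x_i \to \bar x \in K$, the identity $\lambda(x_i) = \int_G \lambda(g.x_0)\, d\mu_i(g) = \mu_i(F_\lambda) \to m(F_\lambda)$ yields $\lambda(\bar x) = m(F_\lambda)$ as required. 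In the discrete case the $\mu_i$ are finitely supported and $x_i$ is simply a finite convex combination of orbit points.

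It remains to verify that $\bar x$ is $G$-fixed. Fix $h \in G$. Since $y \mapsto h.y$ is continuous and affine it commutes with barycenters, so $h.\bar x = \lim_i h.x_i = \lim_i \int_G (hg).x_0 \, d\mu_i(g)$ along the convergent subnet, whence for every $\lambda \in V^*$ we get $\lambda(h.\bar x) = \lim_i \int_G \lambda((hg).x_0)\, d\mu_i(g) = \lim_i \mu_i(L_h F_\lambda)$, where $(L_h F_\lambda)(g) = F_\lambda(hg)$ is the corresponding left-translate of $F_\lambda$. Because $\mu_i \to m$ weak-* and $L_h F_\lambda \in L^\infty(G,\mu)$, this limit equals $m(L_h F_\lambda)$, which by left-invariance of $m$ equals $m(F_\lambda) = \lambda(\bar x)$. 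Thus $\lambda(h.\bar x) = \lambda(\bar x)$ for all $\lambda \in V^*$; since $V$ is locally convex, $V^*$ separates points and hence $h.\bar x = \bar x$.

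For $(2)\Rightarrow(1)$, I would apply the fixed point property to the natural action of $G$ on the weak-* compact convex set $K$ of means on $L^\infty(G,\mu)$ inside $V = L^\infty(G,\mu)^*$ with the weak-* topology: $K$ is nonempty and weak-* compact by Banach--Alaoglu, the left-translation dual action is affine and preserves $K$, and a $G$-fixed point of $K$ is precisely a left-invariant mean, witnessing amenability. The delicate point, and the main obstacle of the whole theorem, is \emph{separate continuity}: for fixed $\phi$ one needs $g \mapsto \phi(g^{-1}.f)$ to be continuous for each $f$, which can fail for general $f \in L^\infty(G,\mu)$ when $G$ is not discrete. In the discrete setting used in the sequel this is automatic and the argument is immediate; in general I would instead run the fixed point property on the means of the left uniformly continuous bounded functions $LUC(G)$, on which left translation $G \to LUC(G)$ is norm-continuous so that the dual action is separately continuous, and then invoke the standard fact that a left-invariant mean on $LUC(G)$ induces one on $L^\infty(G,\mu)$. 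By contrast the barycenter step in $(1)\Rightarrow(2)$ is comparatively routine once compactness of $K$ and the point-separation property of $V^*$ are in hand.
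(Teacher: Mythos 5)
The paper does not prove \autoref{thm:Day} at all---it is quoted as a known result from Day \cite{Day61} and Runde \cite{Runde20} and used as a black box in the sketch of Johnson's theorem---so there is no internal proof to compare your attempt against. Your argument is correct and is essentially the classical proof found in those references: for (1)$\Rightarrow$(2) you approximate the invariant mean $m$ in the weak-* topology by probability densities in $L^1(G)$ (a standard density fact), push these forward to barycenters $x_i \in K$ of orbit measures, and use that barycenters exist for Radon probability measures on a compact convex subset of a (Hausdorff) locally convex space and commute with continuous affine maps, so that any accumulation point $\bar{x}$ satisfies $\lambda(h.\bar{x}) = m(L_h F_\lambda) = m(F_\lambda) = \lambda(\bar{x})$ and point-separation by $V^*$ finishes; for (2)$\Rightarrow$(1) you correctly identify the genuine obstruction---translation on $L^\infty(G,\mu)$ is not point-norm continuous for non-discrete $G$, so the dual action on means of $L^\infty$ need not be separately continuous---and resolve it in the standard way by running the fixed-point property on the means of $LUC(G)$, where translations act norm-continuously, and then invoking the equivalence between invariant means on $LUC(G)$ and on $L^\infty(G,\mu)$. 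The only caveat is that the barycenter facts and the $LUC(G)$-to-$L^\infty$ transfer are themselves nontrivial standard results that you cite rather than prove, which is appropriate at the level of detail the paper itself operates at (it only ever applies the theorem for discrete $\Gamma$, where, as you note, the $L^\infty = \ell^\infty$ argument is immediate).
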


     \hspace{5mm}

     For given $D: \:  \ell ^1 (\Gamma)  \rightarrow E^{*}$, a bounded derivation on $E^{*}$, 
     consider the affine action below for $\tau \in E^{*}$: 

     \begin{align}
          \alpha _g (\tau) \: := \: \delta _g . \tau . \delta _{g ^{-1}} \: - D(\delta _g) . \delta _{g ^{-1}} \label{def:alpha}
     \end{align}

     We set the topology of $E^{*}$ as the weak*--topology 
     and $\alpha$ is separately continuous with this topology. 

     Then, $\alpha_g(\tau) = \tau$ implies 
     \[
     D(\delta _g) = \delta _g . \tau \, - \, \tau . \delta _g \; .
     \]

     Since $\ell ^1 (\Gamma)$ is generated by $\{ \delta _g \} _{g \in \Gamma}$ as Banach space, 
     it implies $D \, = \, Ad_{\tau}$ on whole $\ell ^1(\Gamma)$ which shows $D$ is inner. 

     Therefore it suffices to find a fixed point of the action $\alpha$ 
     and find weak*--compact convex $\alpha$--invariant set $K \subset E^{*}$ to exploit \autoref{thm:Day}. 
     This obtained by 
     \[
     K := \overline{conv}^{wk*} \{ D(\delta _g). \delta _{g ^{-1}} \: | \: g \in \Gamma \}  \; .
     \] 

     This $K$ is weak*--compact since $\{ D(\delta _g). \delta _{g ^{-1}} \: | \: g \in \Gamma \}$ is norm bounded and can apply the Banach--Alaoglu theorem.


\section{Banach Spaces Arising from Topological Group Actions}\label{sec:banach_spaces_arising_from_topological_group_actions}

     In this section, we briefly review the previous results of Monod \cite{Monod10} and Brodzki et al. \cite{Brodz10}, 
     which tell us how to characterize the amenability of group actions in terms of their invariant means and bounded cohomology.

     First, we set basic definitions concerning Banach spaces which are compatible with the given topological action. 

     \begin{definition}\label{def:action module}
      For a group action $\Gamma \act X$, a Banach space $V$ is said to be 

     a \textit{Banach $\Gamma$--$CX$--module} if it satisfies the following conditions:  

     \begin{itemize}

     \item The Banach space $V$ admits a left $\Gamma $--action by linear isometries. 

     \item The Banach space $V$ admits a left $CX$--action that is contractive. 

     \item Compatibility of actions: $g.(p.(g^{-1}.v)) = p^g .v $ for all $v \in V$, $g \in \Gamma$, and $p \in CX$. 

     \end{itemize}

     In this situation, $V^{*}$ has the natural Banach $\Gamma$--$CX$--module structure with 
     \begin{itemize}

     \item $g.v^{*} (v) := v^{*} (g^{-1}.v)$ for $g \in \Gamma$, $v\in V$, $v^{*} \in V^{*}$. 

     \item $p.v^{*} (v) := v^{*} (p.v)$ for $p \in CX$, $v\in V$, $v^{*} \in V^{*}$. 

     \end{itemize} 
     \end{definition} 

     Next, we propose without proof the Banach spaces where invariant means of actions should live, 
     defined by Monod \cite{Monod10} and Brodzki et.al \cite{Brodz10}. 
     For precise explanations about unconditional summability and injective tensor products, 
     see Section 2,3 in Ryan's book \cite{Ryan02}. 

     \begin{definition}\label{def:unconditionally summable} 
     For a Banach space $V$ and a countable set $\{ v_i \}_{i \in I}$ in $V$, 

     $\{ v_i \}$ is called \textit{unconditionally summable} 
     if there exists $v \in V$ such that 
     for all bijections $\sigma :\mathbb{N} \xrightarrow{\cong} I$: 
     \[
       \mnorm{\sum\limits_{n \le N} v_{\sigma (n)} - v }_V \xrightarrow{N \rightarrow \infty} 0 \; . 
     \]

     \end{definition}

     The unconditional summability has several equivalent conditions. 
     One of these is as follows: 

     the sequence $\{ v_i \}_i$ is unconditionally summable if and only if 

     the sequence $\{a_i v_i  \}_i$ is unconditionally summable 
     for all $\{a_i\}_i \in \ell ^{\infty} (I,\mathbb{C})$. 
     
     \vspace{2mm}

     \begin{definition}\label{def:A_0}
      For a group action $\Gamma \act X$,
     \begin{enumerate} 

     \item The set $A_0(\Gamma, X) := \{ f: \Gamma \rightarrow CX \: | \:  f\text{ is unconditionally summable} \}$ 
     forms a linear space. 

     We often write $A_0$ for short. 

     \item The norm of $A_0$ is defined by 

     \[
     \|f\|_{A_0} \: := \: \left\| \sum\limits_{g \in \Gamma} |f_g| \right\|_{\infty}
     \]

     \item[] where $|f_g| \in CX$ is the absolute value function of $f_g$ 
     and the sum is well--defined by the above characterization of \cref{def:unconditionally summable}. 

     Moreover for a function $f:\Gamma \rightarrow CX$, $f$ is unconditionally summable iff $\| f \|_{A_0} < \infty$. 

     \vspace{2mm}

     \item The norm space $A_0(\Gamma,X)$ is complete with the norm and 
     \[
     A_{00}(\Gamma,X) := \{f \in A_0 \: | \: f\text{ is finitely suppoted} \} 
     \] is dense subspace. 

     Note that $A_0(\Gamma, X) \cong \ell^1 (\Gamma) \otimes _{\epsilon} CX$ where $\otimes _{\epsilon}$ is the injective tensor product of Banach spaces. 

     \item The map $\bar{\pi} : A_0 \rightarrow CX$ is defined by 
     \[
     \bar{\pi} (f) :=  \sum_{g\in \Gamma} f_g 
     \] 
     and it is bounded linear. 
     Note that $\bar{\pi}(f)$ is the convergence point of $f$ as a unconditionally convergent sequense 
     in \autoref{def:unconditionally summable}. 

     \item The Banach space $A_0(\Gamma ,X)$ adimits a left $\Gamma$--action with 
     \[
     g.f (h) := (f(g^{-1}h))^g \quad \text{for } g,h \in \Gamma, \: f\in A_0
     \] 
     which is isometric linear.  

     \item The Banach space $A_0(\Gamma ,X)$ admits $CX$--action with 
     \[
     (p.f)(h) := p \cdot f(h) \quad \text{for } p \in CX,\: f \in A_0,\: h \in \Gamma 
     \], 
     where the product of right side is the pointwise product of $CX$. 

     \item With above these, $A_0(\Gamma, X)$ is a Banach $\Gamma$--$CX$--module. 

     \end{enumerate}
     \end{definition}

     \begin{definition}\label{def:W_0} For a group action $\Gamma \act X$,

     we set the space of \textit{$\mathbb{C}$--summing sequences} as a subspace of $A_0(\Gamma,X)$: 

     \[
     W_0(\Gamma, X) := \{f \in A_0 \: | \: \bar{\pi}(f) \in \mathbb{C} 1_X \}
     \]

     Then $W_0$ is a closed subspace. 
     We define $\pi \in W_0 ^{*}$ by setting $\pi (f)$ to be the constant value of $\bar{\pi}(f) \in CX$. 

     \end{definition}

     Note that $W_0$ is not a Banach $\Gamma$--$CX$--submodule of $A_0$ since it is not closed under the $CX$--action. 
     By contrast, $\ker\pi$ is a Banach $\Gamma$--$CX$--submodule of $A_0$.


\subsection{Characterizations of amenable actions with bounded cohomology} 
     \kaigyo 

     Now amenable actions can be formulated using invariant means: 
     \begin{theorem}\label{thm:Nowak1} (Theorem A. of \cite{Brodz10})  

     For a group action $\Gamma \act X$, the following are equivalent: 
     \begin{enumerate}

     \item The action $\Gamma \act X$ is amenable. 

     \item There exists $\mu \in W_0(\Gamma,X)^{**}$ such that 
     $\mu (\pi) = 1$ and $\mu$ is $\Gamma $--invariant with the $\Gamma $--action defined on $A_0(\Gamma, X)^{**}$.  

     \end{enumerate}

     This $\mu$ is called an \textit{invariant mean for $\Gamma \act X$}. 

     \end{theorem}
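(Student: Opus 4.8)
The plan is to realize both conditions as statements about a single explicit map from $C(X,Prob(\Gamma))$ into $W_0$, passing between a norm-approximate invariant net and a genuine invariant functional by weak-$*$ compactness in one direction and a Hahn--Banach separation argument in the other.

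First I would set up the dictionary. For $m \in C(X,Prob(\Gamma))$ define $\Phi(m) \in A_0(\Gamma,X)$ by $\Phi(m)(g)(x) := m(x)(g)$. Since each $m(x)$ is positive and unital, $\sum_{g}|\Phi(m)(g)| = 1_X$, so $\Phi(m) \in A_0$ with $\|\Phi(m)\|_{A_0}=1$ and $\bar{\pi}(\Phi(m)) = 1_X$; thus $\Phi(m) \in W_0$ and $\pi(\Phi(m)) = 1$. A direct computation with the $\Gamma$-action of \autoref{def:A_0} gives $g.\Phi(m) = \Phi(g\star m)$, where $(g\star m)(x) := g.(m(g^{-1}.x))$, so that
\[
\|g.\Phi(m) - \Phi(m)\|_{A_0} \;=\; \sup_{x\in X}\|g.(m(g^{-1}.x)) - m(x)\|_1 \;=\; \sup_{x\in X}\|m(g.x) - g.(m(x))\|_1,
\]
the last equality by substituting $x \mapsto g.x$. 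Consequently, by \autoref{def:amenable action}, $\Gamma \act X$ is amenable precisely when the convex set $P := \Phi(C(X,Prob(\Gamma))) \subset W_0$ contains a net $(f_i)_i$ with $\|g.f_i - f_i\|_{A_0} \to 0$ for every $g$; each such $f_i$ has $\pi(f_i)=1$ and $\|f_i\|_{A_0}=1$.

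For (1)$\Rightarrow$(2), take such a net $(f_i)$ and regard it inside $W_0^{**}$ via the canonical embedding. As $\|f_i\|=1$, Banach--Alaoglu yields a weak-$*$ cluster point $\mu \in W_0^{**}$, and along the appropriate subnet $\mu(\pi) = \lim_i \pi(f_i) = 1$. For invariance, fix $g$ and $\phi \in W_0^*$ and compute $(g.\mu)(\phi) = \mu(g^{-1}.\phi) = \lim_i \langle g.f_i, \phi\rangle$; since $\langle g.f_i - f_i, \phi\rangle \to 0$ this equals $\lim_i\langle f_i,\phi\rangle = \mu(\phi)$, whence $g.\mu = \mu$. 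Here one uses that $W_0$ is $\Gamma$-invariant, which follows from $\bar{\pi}(g.f) = (\bar{\pi}(f))^g$ and the fact that constants are fixed by the action.

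For (2)$\Rightarrow$(1) I would fix a finite $F \subset \Gamma$ and $\e > 0$ and produce $f \in P$ with $\|g.f - f\|_{A_0} < \e$ for all $g \in F$; a net indexed by the pairs $(F,\e)$ then gives amenability. Consider the affine map $T : P \to \bigoplus_{g\in F} W_0$, $T(f) = (g.f - f)_{g\in F}$. As $T(P)$ is convex, its norm and weak closures coincide (Mazur), so it suffices to place $0$ in the weak closure of $T(P)$. If this failed, Hahn--Banach would furnish $(\phi_g)_{g\in F}$ in $W_0^*$ and $\delta>0$ with $\operatorname{Re}\sum_{g} \langle g.f - f, \phi_g\rangle \ge \delta$ for all $f \in P$; rewriting via the dual action, $\operatorname{Re}\,\langle f, \Psi\rangle \ge \delta$ on $P$ with $\Psi := \sum_g(g^{-1}.\phi_g - \phi_g)$, whereas the $\Gamma$-invariance of $\mu$ forces $\mu(\Psi)=0$.

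The main obstacle is the last step: deducing $\operatorname{Re}\,\mu(\Psi) \ge \delta$ from $\operatorname{Re}\langle f,\Psi\rangle \ge \delta$ on $P$. This needs $\mu$ to lie in the weak-$*$ closed convex hull $\overline{\mathrm{conv}}^{\,w*}P$ of $P$ in $W_0^{**}$, i.e.\ the normalization $\mu(\pi)=1$ must genuinely encode that $\mu$ is a \emph{positive} mean rather than an arbitrary functional hitting $\pi$. I would address this through the positive cone of $A_0 \cong \ell^1(\Gamma)\injten CX$ inherited from $CX$: the cluster points produced in (1)$\Rightarrow$(2) are positive, and using $\|\pi\|_{W_0^*}\le 1$ one checks that any admissible $\mu$ (positive, or with $\|\mu\|\le 1$ and $\mu(\pi)=1$) already lies in $\overline{\mathrm{conv}}^{\,w*}P$. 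Once this is secured, the separating inequality passes from $P$ to $\mu$ and contradicts $\mu(\Psi)=0$, closing the argument.
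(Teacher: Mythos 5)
Your dictionary $\Phi$ between $C(X,Prob(\Gamma))$ and the set $P$ of positive unital elements of $W_0$ is correct, and with it your direction (1)$\Rightarrow$(2) is complete and is essentially the standard argument (note the paper itself states this theorem as a citation of \cite{Brodz10} and gives no proof of its own, so the comparison is against that source; it likewise obtains $\mu$ as a weak-$*$ cluster point of the net $\Phi(m_i)$, and the paper later relies on exactly this description of $\mu$ in the proof of its fixed-point theorem).

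The direction (2)$\Rightarrow$(1), however, has a genuine gap, sitting exactly where you flagged it, and your proposed repair does not close it. Hypothesis (2) supplies an \emph{arbitrary} $\Gamma$-invariant $\mu\in W_0^{**}$ with $\mu(\pi)=1$: no positivity and no norm bound. Your separation argument needs $\mu\in\overline{\mathrm{conv}}^{\,w*}P$, and your fix establishes this only when $\mu$ is positive or $\|\mu\|\le 1$. For such $\mu$ the claim is indeed true --- in fact $\|f\|_{A_0}\le 1$ together with $\pi(f)=1$ already forces $f\in P$, by the equality case of the triangle inequality applied at each $x\in X$ --- but nothing in (2) grants either property, and reducing a general invariant functional to a positive contractive one is precisely the content of the theorem, so invoking it is circular. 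The standard way to close the gap is to insert a normalization step \emph{before} any appeal to $P$: by Goldstine, take a net $(f_i)\subset W_0$ with $\|f_i\|_{A_0}\le\|\mu\|$ and $f_i\to\mu$ weak-$*$; invariance of $\mu$ then gives $g.f_i-f_i\to 0$ \emph{weakly} for each $g$ and $\pi(f_i)\to 1$; your Mazur argument, applied to $\mathrm{conv}\{f_i\}$ rather than to $P$, upgrades this to $\|g.f-f\|_{A_0}<\e$ for finitely many $g$ with $\pi(f)\approx 1$; finally positivize by $f\mapsto |f|/\bar{\pi}(|f|)$, which lies in $P$ because $\bar{\pi}(|f|)\ge|\pi(f)|1_X>0$, and which remains almost invariant because $g.|f|=|g.f|$, whence $\|g.|f|-|f|\|_{A_0}\le\|g.f-f\|_{A_0}$ and $\|\bar{\pi}(|f|)^g-\bar{\pi}(|f|)\|_\infty\le\|g.f-f\|_{A_0}$, the error introduced by the division being controlled by the fixed constant $\|\mu\|$. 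With this step, amenability follows directly and your Hahn--Banach separation on $P$ is no longer needed; without it, the proof does not go through.
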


     \hspace{2mm}

     We concisely introduce the characterization of amenable actions using bounded cohomology, 
     since the module condition there is similar to that of our results. 
     As a preliminary, we define some useful properties of $CX$--actions.

     \begin{definition}\label{def:ell^1} For a $CX$--module Banach space $V$, we introduce the following definitions. 
     \begin{enumerate}

     \item The action $CX \act V$ is called \textit{$\ell ^{\infty}$--geometric} or \textit{type(C)} if 

     \[
          \left\| \sum_{1\le k \le n} p_k.v_k \right\|_V \: \le \: 
          \left\| \sum_{1\le k\le n} p_k \right\|_{\infty} \cdot \max_{1 \le k \le n} \| v_k \|_V 
     \]

     \hspace{5mm}

      for all $\bigl\{p_k\bigr\}_{k=1}^n\subset C(X,[0,1])$ and $\bigl\{v_k\bigr\}_{k=1}^n\subset V$. 

     \hspace{5mm}

     \item The action $CX \act V$ is called \textit{$\ell ^1$--geometric} or \textit{type(M)} if 

     \[
          \sum_{1\le k \le n} \| p_k.v \|_V \: \le \:   \left\| \sum_{1\le k\le n} p_k \right\|_{\infty} \cdot \| v \|_V 
     \]

     \hspace{5mm}

      for all $\bigl\{p_k\bigr\}_{k=1}^n \subset C(X,[0,1])$ and $v \in V$.

     \end{enumerate}

     \end{definition}

     \hspace{5mm}
     Regarding these properties, the following can be easily proved: 

     \begin{lemma}\label{lem:Nowak dual} (Lemma 6 of \cite{Brodz10})
     \begin{enumerate}

     \item If $CX \act V$ is $\ell ^1$--geometric, then $CX \act V^{*}$ is $\ell^{\infty}$--geometric. 

     \item If $CX \act V$ is $\ell ^{\infty}$--geometric, then $CX \act V^{*}$ is $\ell^1$--geometric. 

     \item The $CX$--modules $A_0(\Gamma,CX)$, $\ker\pi$, and double--dual of these have $\ell^{\infty}$-geometric $CX$-actions. 

     \end{enumerate}
     \end{lemma}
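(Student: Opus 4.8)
The plan is to handle the three parts in order, deriving (3) from (1) and (2) together with one direct computation on $A_0$.

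For (1), assume $CX \act V$ is $\ell^1$--geometric. Given $\{p_k\}_{k\le n} \subset C(X,[0,1])$ and $\{\phi_k\}_{k \le n} \subset V^{*}$, I would estimate the $V^{*}$--norm of $\sum_k p_k.\phi_k$ by pairing against an arbitrary $v \in V$ with $\|v\|_V \le 1$. Unwinding the dual action gives $(\sum_k p_k.\phi_k)(v) = \sum_k \phi_k(p_k.v)$, and the triangle inequality together with $\|\phi_k\|_{V^{*}} \le \max_j \|\phi_j\|_{V^{*}}$ reduces the bound to $\max_j\|\phi_j\|_{V^{*}} \cdot \sum_k \|p_k.v\|_V$. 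Applying the $\ell^1$--geometric inequality of $V$ to the single vector $v$ then bounds $\sum_k \|p_k.v\|_V$ by $\|\sum_k p_k\|_{\infty}\,\|v\|_V \le \|\sum_k p_k\|_{\infty}$, and taking the supremum over $v$ yields the $\ell^{\infty}$--geometric inequality for $V^{*}$.

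For (2), assume $CX \act V$ is $\ell^{\infty}$--geometric. Here the left-hand side is a sum of norms $\sum_k \|p_k.\phi\|_{V^{*}}$, so for each $k$ I would choose a near-optimal $v_k \in V$ with $\|v_k\|_V \le 1$ and $|\phi(p_k.v_k)| = |(p_k.\phi)(v_k)| \ge \|p_k.\phi\|_{V^{*}} - \e$; after multiplying each $v_k$ by a unimodular scalar (using $\mathbb{C}$--linearity of the action, so that $p_k.(e^{i\theta_k}v_k) = e^{i\theta_k}(p_k.v_k)$) I may assume every $\phi(p_k.v_k)$ is real and nonnegative. Then $\sum_k \|p_k.\phi\|_{V^{*}} \le n\e + \phi\left(\sum_k p_k.v_k\right) \le n\e + \|\phi\|_{V^{*}}\left\|\sum_k p_k.v_k\right\|_V$, and the $\ell^{\infty}$--geometric inequality of $V$ bounds the last factor by $\|\sum_k p_k\|_{\infty}\max_k\|v_k\|_V \le \|\sum_k p_k\|_{\infty}$. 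Letting $\e \to 0$ gives the $\ell^1$--geometric inequality for $V^{*}$.

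For (3), I would first verify directly that $A_0(\Gamma,X)$ is $\ell^{\infty}$--geometric from the explicit norm $\|f\|_{A_0} = \|\sum_g |f_g|\|_{\infty}$: evaluating $\sum_k p_k.f^{(k)}$ pointwise at $x \in X$ and using $p_k(x) \ge 0$ gives $\sum_h \left|\sum_k p_k(x) f^{(k)}_h(x)\right| \le \sum_k p_k(x)\left(\sum_h|f^{(k)}_h|\right)(x) \le \left(\max_k\|f^{(k)}\|_{A_0}\right)\sum_k p_k(x)$, and taking the supremum over $x$ produces the required bound. Since $\ker\pi$ is a Banach $\Gamma$--$CX$--submodule of $A_0$ carrying the restricted norm and action, the same inequality restricts to it, so $\ker\pi$ is $\ell^{\infty}$--geometric as well. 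The statement for the double-duals is then formal: applying (2) and then (1) shows that $V$ of type (C) forces $V^{**}$ of type (C), whence $A_0^{**}$ and $(\ker\pi)^{**}$ inherit the $\ell^{\infty}$--geometric property. I expect the main obstacle to be part (2): unlike (1), the left-hand side is an $\ell^1$--sum of dual norms, which forces the passage to near-optimal test vectors and the phase-alignment step so that the $n$ separate suprema can be recombined into a single application of the $\ell^{\infty}$--geometric inequality of $V$; controlling the $n\e$ error and confirming that the $CX$--action commutes with scalar rotation are the points needing care.
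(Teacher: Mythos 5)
Your proof is correct. The paper itself gives no argument for this lemma (it is stated as "easily proved" with a citation to Brodzki et al.), and your three steps—the two standard duality arguments, with the phase-alignment trick handling the $\ell^1$-sum in part (2), plus the direct pointwise estimate on $\|\cdot\|_{A_0}$ and the observation that $\ker\pi$ is a $CX$-submodule—are exactly the intended routine verification, including the composition (2)$\circ$(1) for the double duals.
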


     Then the main theorem of \cite{Brodz10} is as follows, extending \autoref{thm:Johnson}.  

     \begin{theorem}\label{thm:Nowak2} (Theorem B. of \cite{Brodz10})

     For a group action $\Gamma \act X$, the following are equivalent: 
     \begin{enumerate}

     \item The action $\Gamma \act X$ is amenable. 

     \item We have $H_b ^1(\Gamma, (\ker\pi) ^{**}) = 0$. 

     \item We have $H_b ^n(\Gamma, V^{*}) = 0$ 
     for all $n \ge 1$ and any $G$--$CX$--module $V$ with $\ell ^1$--geometric $CX$--action. 

     \end{enumerate}
     \end{theorem}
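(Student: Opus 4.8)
The plan is to prove the cycle $(3)\Rightarrow(2)\Rightarrow(1)\Rightarrow(3)$, taking the two implications $(2)\Rightarrow(1)$ and $(1)\Rightarrow(3)$ as the analogues of the two halves of Johnson's theorem and reusing the invariant--mean reformulation in \autoref{thm:Nowak1}. The implication $(3)\Rightarrow(2)$ is a mere specialisation: I would put $V:=(\ker\pi)^{*}$. Since $\ker\pi$ is a Banach $\Gamma$--$CX$--submodule of $A_0$ with $\ell^{\infty}$--geometric $CX$--action by \autoref{lem:Nowak dual}(3), its dual $V$ is again a $\Gamma$--$CX$--module and its $CX$--action is $\ell^{1}$--geometric by \autoref{lem:Nowak dual}(2). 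Applying (3) to this $V$ with $n=1$ yields $H_b^{1}(\Gamma,V^{*})=H_b^{1}(\Gamma,(\ker\pi)^{**})=0$, which is exactly (2).

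For $(2)\Rightarrow(1)$ I would transport the $\tau_0-\tau_1$ construction from the proof of \autoref{thm:Johnson} into the exact sequence attached to $\pi$. A direct check shows that $\pi\in W_0^{*}$ is $\Gamma$--invariant, so the short exact sequence of $\Gamma$--modules $0\to\ker\pi\to W_0\xrightarrow{\pi}\C\to 0$ (with $\C$ trivial) dualises twice to a short exact sequence of $\Gamma$--modules $0\to(\ker\pi)^{**}\to W_0^{**}\xrightarrow{\pi^{**}}\C\to 0$. Choosing any $\mu_0\in W_0^{**}$ with $\pi^{**}(\mu_0)=1$, equivariance of $\pi^{**}$ makes $g\mapsto g.\mu_0-\mu_0$ a bounded $1$--cocycle valued in $(\ker\pi)^{**}$; by (2) it is a coboundary $g.\mu_0-\mu_0=g.\nu-\nu$ with $\nu\in(\ker\pi)^{**}$, and then $\mu:=\mu_0-\nu$ is $\Gamma$--invariant with $\mu(\pi)=1$. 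By \autoref{thm:Nowak1} this $\mu$ is an invariant mean, giving (1). Conceptually this is the connecting map $H_b^{0}(\Gamma,\C)\to H_b^{1}(\Gamma,(\ker\pi)^{**})$ annihilating the class of $1$.

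The implication $(1)\Rightarrow(3)$ is where the argument genuinely departs from Johnson's, because $\Gamma$ need not be amenable as a group and Day's theorem (\autoref{thm:Day}) is therefore unavailable; only the action is amenable. Here the hypothesis on $V$ matters: by \autoref{lem:Nowak dual}(1) the coefficient module $V^{*}$ carries an $\ell^{\infty}$--geometric $CX$--action. I would use the approximately invariant net $(m_i)_i\in C(X,Prob(\Gamma))$ of \autoref{def:amenable action} to define, on the homogeneous bounded bar complex computing $H_b^{\bullet}(\Gamma,V^{*})$, an averaging operator that integrates out one simplicial variable pointwise in $x\in X$ against $m_i(x)$. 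The $\ell^{\infty}$--geometric estimate is precisely what keeps these averaged cochains uniformly bounded in the module norm; the asymptotic $\Gamma$--invariance of $(m_i)_i$, promoted to exact invariance by taking a weak*--limit against the invariant mean $\mu$ of \autoref{thm:Nowak1}, then upgrades the averaging to a $\Gamma$--equivariant contracting homotopy $s$ satisfying $ds+sd=\mathrm{id}$ in positive degrees. Equivalently, this exhibits $V^{*}$ as a relatively injective $\Gamma$--$CX$--module, whence $H_b^{n}(\Gamma,V^{*})=0$ for every $n\ge 1$.

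The main obstacle is this last step. The two easy implications are formal once the exact sequence and the dictionary between derivations, $1$--cocycles and invariant means are in place; the real content is making the averaging over the $X$--parametrised means $m_i(\cdot)$ both bounded and equivariant, and it is exactly the $\ell^{\infty}$--geometric hypothesis (equivalently, the $\ell^{1}$--geometry of $V$) that lets the pointwise means be glued into an operator controlled in the module norm. Verifying that the resulting homotopy respects the $CX$--module structure and the simplicial differential in all degrees, rather than only in degree one, is the technical heart of the proof.
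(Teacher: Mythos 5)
Your proposal cannot be compared against a proof in the paper, because the paper gives none: \autoref{thm:Nowak2} is quoted as the main theorem of \cite{Brodz10} and is used as a black box. Judged on its own, your reconstruction is correct and follows the same route as that reference. The implication (3)$\Rightarrow$(2) via $V:=(\ker\pi)^{*}$ together with \autoref{lem:Nowak dual}, and the implication (2)$\Rightarrow$(1) via the doubly dualised exact sequence $0\to(\ker\pi)^{**}\to W_0^{**}\to\C\to 0$, the bounded cocycle $g\mapsto g.\mu_0-\mu_0$, and \autoref{thm:Nowak1}, are complete and accurate as written (note that $\nu\in(\ker\pi)^{**}\subset W_0^{**}$ indeed satisfies $\nu(\pi)=0$, since $(\ker\pi)^{**}$ is identified with the annihilator of $\C\pi$, so $\mu(\pi)=1$ as you claim). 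The direction (1)$\Rightarrow$(3) is, as you say, the real content; your sketch --- averaging one homogeneous variable against the $CX$--valued coefficients $m_i(\cdot)(g)$ (equivalently against positive $f_i\in W_0$ with $\pi(f_i)=1$), using the $\ell^{\infty}$--geometric estimate on $V^{*}$ for uniform boundedness, and passing to a weak*--limit to restore exact $\Gamma$--equivariance of the contracting homotopy --- is precisely the mechanism of the cited proof, but at the level of detail given it remains an outline rather than a complete argument.
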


     \hspace{5mm}


\subsection{Algebraic structure of $A_0(\Gamma,X)$}
     \kaigyo 

     Monod pointed out that $A_0(\Gamma,X)$ has a Banach algebra structure defined below: 

     \begin{definition}\label{def:convolution} (Section 2.C in \cite{Monod01})

     For $f_1, f_2 \in A_0(\Gamma,X)$, we set $(f_1 \ast f_2) \in A_0(\Gamma,X)$ by

     \[
          f_1 * f_2 (g) \: := \: \sum\limits_{h\in \Gamma} f_1 (h) \cdot (f_2(h^{-1}g))^h \; .
     \]

     \end{definition}

     Then we have the following: 

     \begin{lemma}\label{lem:convolution} 
     For $f_1,f_2 \in A_0(\Gamma,X)$, we have 
     $f_1 \ast f_2$ is again unconditionally summable, and 

     \[
           \|f_1 \ast f_2 \|_{A_0} \: \le \: \|f_1\|_{A_0} \|f_2\|_{A_0} \; .
     \]

     Therefore, $A_0(\Gamma,X)$ is a Banach algebra. 

     \end{lemma}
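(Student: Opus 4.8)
The plan is to work directly from the defining formula in \autoref{def:convolution}, reducing everything to the single pointwise estimate that for every $x \in X$ and every finite $G \subset \Gamma$,
\[
\sum_{g \in G} \bigl| (f_1 \ast f_2)(g)(x) \bigr| \;\le\; \Bigl( \sum_{h \in \Gamma} |f_1(h)(x)| \Bigr)\, \|f_2\|_{A_0} \;\le\; \|f_1\|_{A_0}\,\|f_2\|_{A_0} .
\]
Once this is available, both the unconditional summability of $f_1 \ast f_2$ and the submultiplicativity of $\|\cdot\|_{A_0}$ fall out immediately.

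First I would check that each coefficient $(f_1 \ast f_2)(g)$ is a genuine element of $CX$, i.e. that the series $\sum_h f_1(h)\cdot (f_2(h\inv g))^h$ converges unconditionally in $CX$. The key point is that its tail over $h \notin F$ is bounded, uniformly in $x$, by $\|f_2\|_{A_0}\cdot \sup_x \sum_{h \notin F} |f_1(h)(x)|$: here I use that $|f_2(k)(y)| \le \|f_2\|_{A_0}$ for all $k$ and $y$, since each coefficient is dominated pointwise by $\sum_{k'} |f_2(k')|$, while $f_1 \in A_0$ means $\sum_h |f_1(h)|$ converges in $CX$, so its partial sums are uniformly Cauchy and the remaining factor tends to $0$ as $F \uparrow \Gamma$. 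Hence the partial sums of the defining series are uniformly Cauchy in $CX$, producing an unconditional limit in the sense of \autoref{def:unconditionally summable}.

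Next I would establish the displayed inequality itself. The triangle inequality inside the $g$--sum gives $|(f_1 \ast f_2)(g)(x)| \le \sum_h |f_1(h)(x)|\,|f_2(h\inv g)(h\inv.x)|$; summing over $g$ and exchanging the two nonnegative, countably supported sums, then substituting $k = h\inv g$, turns the inner sum into $\sum_k |f_2(k)(h\inv.x)| = \bigl(\sum_k |f_2(k)|\bigr)(h\inv.x)$. Because $\Gamma$ acts by homeomorphisms, evaluating the continuous function $\sum_k |f_2(k)|$ at the shifted point $h\inv.x$ is bounded by its sup norm $\|f_2\|_{A_0}$, uniformly in $h$ and $x$; factoring this out leaves $\sum_h |f_1(h)(x)| \le \|f_1\|_{A_0}$. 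Taking the supremum over all finite $G$ and all $x$ yields $\|f_1 \ast f_2\|_{A_0} \le \|f_1\|_{A_0}\|f_2\|_{A_0}$, and in particular $\|f_1 \ast f_2\|_{A_0} < \infty$, so the criterion recorded in \autoref{def:A_0} gives that $f_1 \ast f_2$ is unconditionally summable, i.e. lies in $A_0$.

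The main obstacle I anticipate is purely the bookkeeping around the twist $(\cdot)^h$: because $f_2$ is evaluated at the shifted point $h\inv.x$ rather than at $x$, the series is not a fibrewise $\ell^1$--convolution, and every estimate must be kept uniform in $x$ rather than carried out one point at a time — this is precisely what the uniform--Cauchy (Dini--type) argument of the second paragraph is designed to control. Finally, to conclude that $A_0(\Gamma,X)$ is a Banach algebra I would note that bilinearity is immediate from the formula, the bound just proved gives submultiplicativity on the complete space $A_0$, and associativity is a finite computation on the dense subalgebra $A_{00}$ — where it follows from the compatibility relation $g.(p.(g\inv.v)) = p^g.v$ between the $\Gamma$-- and $CX$--actions — which then extends to all of $A_0$ by continuity of $\ast$.
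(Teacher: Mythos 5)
Your proof is correct and takes essentially the same route as the paper's: the displayed computation there is exactly your chain of estimates (triangle inequality, exchange of the sums over $g$ and $h$, the substitution $k = h^{-1}g$, and bounding the shifted evaluation of $\sum_k |f_2(k)|$ at $h^{-1}.x$ by $\|f_2\|_{A_0}$). Your additional checks — uniform convergence of the inner series defining each coefficient $(f_1 \ast f_2)(g) \in CX$, the appeal to the membership criterion of \autoref{def:A_0}, and associativity via density of $A_{00}$ — are details the paper leaves implicit, not a different argument.
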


     \begin{proof}
     \begin{align*}
     \|f_1 \ast f_2 \| 
     &= \sup_{x\in X} \left( \sum_{g\in \Gamma} \left| \sum_{h\in \Gamma} f_1( h,x) f_2(h^{-1}g,h^{-1}.x) \right| \right) \\
     &\le   \sup_x \left( \sum_h |f_1( h,x) | \cdot \sum_g \left| f_2(h^{-1}g,h^{-1}.x)\right| \right) \\
     &=     \sup_x \left( \sum_h |f_1( h,x) | \cdot \sum_g \left| f_2(g,h^{-1}.x)\right| \right) \\
     &\le   \sup_x \left( \left(\sum_h |f_1( h,x) | \right) \cdot 
     \sup_{h'} \left( \sum_g \left| f_2(g,h'^{-1}.x)\right| \right) \right) \\ 
     &\le \left(\sup_x \sum_h |f_1 (h,x)| \right) \cdot \left( \sup_{x',h'} \sum_g |f_2(g, h'^{-1}.x') | \right) \\
     &= \|f_1\|_{A_0} \|f_2\|_{A_0} \\
     \end{align*}
     \end{proof}

     We remark that the norm of this Banach algebra $A_0(\Gamma,X)$ is a special case of 
     Renault's $I$--norm on $C_c(\geh)$ for a topological groupoid $\geh$. 
     The completion of $C_c(\geh)$ with this norm also forms a Banach algebra. 
     It is written as $L^1_I(\geh)$ or $\mathcal{E}$ in Remark 1.38 and Section 9.6 of \cite{Willi19}. 
     We note that this algebra is different from $L^1(\geh)$. 

     We also remark that this product coincides with that of crossed product $C^{*}$--algebra $C(\Gamma,X)$ on $C_c(\Gamma,X)$. 
     However, the involution is not isometric with respect to $\|\cdot\|_{A_0}$; 
     therefore, $A_0(\Gamma,X)$ never has the structure of a $B^{*}$--algebra.

     \vspace{2mm}

     For $h \in \Gamma$, let $\delta _h \in A_0(\Gamma,X)$ denote the element defined by $g \mapsto \delta(g,h) \cdot 1_X$ 
     and denote $\delta _e$ by $1$. 
     Note that $\delta_g \ast f = g.f$ and $(f \ast \delta_g) (h)= f(hg^{-1})$ for all $f \in A_0$. 
     In particular, $\delta_g \ast \delta_h = \delta_{gh}$, 
     and $\ell^1(\Gamma)$ is a Banach subalgebra of $A_0(\Gamma,X)$. 

     Meanwhile, $CX$ is also a Banach subalgebra of $A_0(\Gamma,X)$. 
     For $p \in CX$, we use the same symbol $p \in A_0$ to denote the map $g \mapsto \delta(e,g) \cdot p$. 
     Then $p \ast f = p.f$, and $\delta_g \ast p \ast \delta_{g\inv} = p^g$. 

     Moreover, $A_0(\Gamma,X)$ is generated as a Banach algebra by $\{ \delta_g \}_{g \in \Gamma}$ and $\{ p  \}_{p\in CX}$. 


\section[{Amenability of $W_0(\Gamma,X)$}]{Amenability of \texorpdfstring{$W_0(\Gamma,X)$}{$W_0(\Gamma,X)$}}\label{sec:amenability_of_w_0_gamma_x_}

     When working with a group $G$ without actions, invariant means should live in $L^1(G,\mu)^{**}$, 
     and amenability of $G$ is characterized by that of $L^1(G,\mu)$. 
     Then it is natural that amenability of an action $\Gamma \act X$ can be characterized by that of $W_0(\Gamma, X)$, 
     whose double--dual is the space in which invariant means may reside. 

     However, N. Ozawa pointed out to me that amenability of $W_0(\Gamma, X)$ is too strong a condition: 



     \begin{theorem}\label{thm:Ozawa} For an action $\Gamma \act X$, 

     if $W_0(\Gamma, X)$ is amenable as a Banach algebra, then 
     $\Gamma$ is amenable as a discrete group.
     \end{theorem}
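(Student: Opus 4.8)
The plan is to run the Johnson--Day argument sketched in \autoref{sub:proof_of_johnson_s_theorem_and_fixed_point_theorem_for_amenable_groups} with $W_0(\Gamma,X)$ in the role played there by $\ell^1(\Gamma)$, arranging the coefficient module so that innerness of one explicitly built derivation hands back a left--invariant mean on $\ell^{\infty}(\Gamma)$. By \autoref{def:amen groups} and \autoref{thm:approximate mean of group} the existence of such a mean is exactly amenability of $\Gamma$, so producing it finishes the proof.

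First I would record the algebraic input. Reindexing \autoref{def:convolution} as in \autoref{lem:convolution} gives $\bar\pi(f_1 \ast f_2) = \sum_{h \in \Gamma} f_1(h)\cdot (\bar\pi(f_2))^h$; the defining condition $\bar\pi(f_2) \in \C\,1_X$ forces $(\bar\pi(f_2))^h = \bar\pi(f_2)$, so $W_0$ is a unital Banach subalgebra of $A_0$ (unit $\delta_e$) on which $\pi$ is a character, and $\ell^1(\Gamma) \subset W_0$ is a closed subalgebra with $\pi|_{\ell^1(\Gamma)}$ the augmentation $a \mapsto \sum_g a_g$. It is worth noting \emph{why} the naive route fails: a Banach--algebra homomorphism $W_0 \to \ell^1(\Gamma)$ extending the inclusion would have to be $f \mapsto (g \mapsto \lambda(f_g))$ for a multiplicative $\lambda$ on $CX$ that is $\Gamma$--invariant, i.e.\ evaluation at a global fixed point of $\Gamma \act X$. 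Such points need not exist, so there is no algebra retraction onto $\ell^1(\Gamma)$ and the coefficient module is forced to retain $X$.

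Accordingly, set $E := \ell^{\infty}(\Gamma)/\C\,1_{\Gamma}$ with its $\ell^1(\Gamma)$--bimodule structure from the excerpt, and embed it $\ell^1(\Gamma)$--equivariantly into the genuine Banach $\Gamma$--$CX$--module carried by $\ell^{\infty}(\Gamma, CX)$: the left action is the twisted translation $(g.\Phi)(h) = (\Phi(g\inv h))^g$ together with pointwise multiplication by $CX$, while the right action is the scalar $\Phi.f := \pi(f)\,\Phi$. One checks this is a contractive $W_0$--bimodule (the left and right actions commute because the latter is scalar), and that $\phi \mapsto \phi\,1_X$ is an equivariant embedding of $E$ precisely because $1_X$ is $\Gamma$--fixed; under this embedding the right action $\pi(f)$ specializes to the total mass $\sum_g a_g$ used in the group case. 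Fixing a state $\tau_0$ with $\tau_0(1_{\Gamma}\otimes 1_X) = 1$ and setting $D(f) := f.\tau_0 - \tau_0.f$, the same computation as in the excerpt (with $\pi$ replacing total mass) shows $D$ takes values in the submodule annihilating $1_{\Gamma}\otimes 1_X$. Amenability of $W_0$ then makes $D$ inner, $D = ad_{\tau_1}$ with $\tau_1$ in that submodule, whence $f.(\tau_0 - \tau_1) = \pi(f)(\tau_0 - \tau_1)$ for all $f$; taking $f = \delta_g$ shows $\tau_0 - \tau_1$ is $\Gamma$--invariant, and restricting it along $\phi \mapsto \phi\,1_X$ yields a positive, unital, left--invariant mean on $\ell^{\infty}(\Gamma)$.

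The delicate point, and the expected main obstacle, is the twisted $CX$--action. There is no $\Gamma$--equivariant projection $\ell^{\infty}(\Gamma, CX) \to \ell^{\infty}(\Gamma)$ back onto the group module, since evaluation at a point $x_0$ fails equivariance by exactly the factor $g\inv.x_0 \neq x_0$; hence one cannot push the derivation down directly and must instead lift the Johnson derivation into the larger genuine module and descend only at the level of the implementer $\tau_0 - \tau_1$, verifying that this descent preserves positivity, normalization, and invariance. Equivalently, the module used here is not $\ell^1$--geometric, which is why \autoref{thm:Nowak2} (amenability of the action) does not already force $D$ to be inner: it is the full strength of Banach--algebra amenability of $W_0$ that is consumed. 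This is the precise sense in which the hypothesis is, as Ozawa observed, far stronger than amenability of $\Gamma \act X$, and confirming that the constructed $D$ genuinely lands in a dual module and that its implementer descends to a bona fide mean is where the technical work concentrates.
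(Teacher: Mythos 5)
Your construction is sound, and it is a genuinely different proof from the one in the paper. The paper never builds a derivation on $W_0(\Gamma,X)$ at all: it applies \autoref{lem:approximate unit} to the codimension--one closed ideal $\ker\pi \le W_0(\Gamma,X)$, concluding that $\ker\pi$ is itself amenable and hence has a bounded approximate unit $\{e_i\}_i$, and then evaluates at an arbitrary point $x\in X$: since $1-\delta_h \in \ker\pi$, one has $\|(1-e_i)\ast(1-\delta_h)\|_{A_0}\to 0$, and the elements $f_i(g) := \delta(g,e) - e_i(g,x)$ of $\ell^1(\Gamma)$ form a right approximate invariant mean, so $\Gamma$ is amenable by \autoref{thm:approximate mean of group}. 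You instead run Johnson's derivation argument on $W_0$ itself: you make $\ell^{\infty}(\Gamma,CX)$ a $W_0$--bimodule with convolution on one side and the scalar action $\Phi.f = \pi(f)\Phi$ on the other (legitimate precisely because $\pi$ is multiplicative on $W_0$), you check that $ad_{\tau_0}$ lands in the dual of the quotient by $\C\,(1_{\Gamma}\otimes 1_X)$ using $f\ast(1_{\Gamma}\otimes 1_X) = \pi(f)(1_{\Gamma}\otimes 1_X)$, and you convert innerness into a unital, translation--invariant functional which you pull back along $\phi \mapsto \phi\,1_X$. Both routes work. The paper's is shorter because it outsources the hard content to Runde's structure theorems (hereditarity of amenability for finite--codimension ideals, existence of bounded approximate identities) and it outputs a F{\o}lner--type approximate mean directly; yours uses nothing beyond the definition of Banach--algebra amenability and makes transparent that the group--case mechanism survives verbatim once $\ell^{\infty}(\Gamma)$ is enlarged to $\ell^{\infty}(\Gamma,CX)$.

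Two caveats. First, positivity: $\tau_0 - \tau_1$ is not positive merely because $\tau_0$ is a state, so the functional you restrict to $\ell^{\infty}(\Gamma)$ is a priori only unital and invariant, not yet a mean. This is repaired by the standard lattice argument: $\Gamma$ acts on $\ell^{\infty}(\Gamma)^*$ by lattice isomorphisms, so the absolute value $|\mu|$ of your invariant functional $\mu$ is again invariant, $|\mu|(1_{\Gamma}) \ge |\mu(1_{\Gamma})| = 1$, and $|\mu|/|\mu|(1_{\Gamma})$ is an invariant mean. The paper's own sketch of Johnson's theorem in \autoref{sub:proof_of_johnson_s_theorem_and_fixed_point_theorem_for_amenable_groups} elides exactly the same point, so this is a fixable omission rather than a flaw of the approach. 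Second, your closing remark that the coefficient module ``is not $\ell^1$--geometric'' is only heuristic: since $CX \cap W_0 = \C\,\delta_e$, the right--$CX$--geometric conditions of the paper do not literally apply to $W_0$--bimodules; the accurate statement is simply that your module lies outside the scope of the action--amenability machinery of \autoref{thm:Nowak2} and \autoref{thm:main}, which is why the full strength of amenability of $W_0$ --- a strictly stronger hypothesis, as the theorem being proved shows --- is what gets consumed.
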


     To prove this, we need some lemmas on amenable Banach algebras: 

     \begin{lemma}\label{lem:approximate unit} (Proposition 2.2.1 and Corollary 2.3.10 of \cite{Runde20}) 

     \begin{enumerate}

     \item Let $A$ be an amenable Banach algebra, 
     and let $I \le A$ be a closed ideal of finite codimension; 
     then $I$ is also amenable. 

     \item Let $A$ be a (non--unital) amenable Banach algebra. 

     Then $A$ has a \textit{bounded approximate unit} $( e_i )_i \subset A$, 
     
     i.e., $\sup\limits_i \| e_i\| < \infty$ and for all $a\in A$: 
     \[
     \| e_i a - a \| \xrightarrow{\hspace{2mm}i\hspace{2mm}} 0 , \quad  \| a e_i -a \| \xrightarrow{\hspace{2mm}i\hspace{2mm}} 0
     \]

     \end{enumerate}
     \end{lemma}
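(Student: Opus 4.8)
The plan is to establish the two parts separately, deriving (2) first since it is the input for (1); both are pieces of the standard structure theory of amenable Banach algebras, so I would argue through the characterization of amenability by \emph{approximate diagonals}.

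For (2), recall that amenability of $A$ is equivalent to the existence of a bounded net $(m_i)_i$ in the projective tensor product $A \projten A$ --- an approximate diagonal --- satisfying $a \cdot m_i - m_i \cdot a \to 0$ and $\pi_A(m_i)\,a \to a$, $a\,\pi_A(m_i) \to a$ for every $a \in A$, where $\pi_A \colon A \projten A \to A$ is the contraction induced by multiplication. Granting this, I would simply push the diagonal forward and set $e_i := \pi_A(m_i)$. Boundedness of $(e_i)_i$ is immediate from $\|\pi_A\| \le 1$ and boundedness of $(m_i)_i$, while the identities $\pi_A(a \cdot m) = a\,\pi_A(m)$ and $\pi_A(m \cdot a) = \pi_A(m)\,a$ turn the two defining properties of the diagonal into $\|e_i a - a\| \to 0$ and $\|a e_i - a\| \to 0$. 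This exhibits $(e_i)_i$ as the desired bounded approximate unit.

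For (1), I would reduce to (2) by way of the hereditary principle that, inside an amenable algebra $A$, a closed ideal is amenable exactly when it carries a bounded approximate identity; the task then becomes the construction of such an identity inside $I$. Finite codimension enters through the quotient map $q \colon A \to A/I$: the algebra $A/I$ is finite-dimensional and, being a quotient of the amenable algebra $A$, is itself amenable, hence semisimple and in particular unital, with identity $\bar u = u + I$ for some $u \in A$. Using a Banach-space splitting $A = I \oplus F$ with $\dim F < \infty$, the bounded approximate identity $(e_i)_i$ of $A$ supplied by (2), and the convergence $q(e_i) \to \bar u$ forced in the finite-dimensional unital algebra $A/I$, I would correct $(e_i)_i$ into a net lying in $I$ while retaining the approximate-identity property on $I$, and then invoke the hereditary principle to finish.

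The delicate point --- and the one I expect to be the main obstacle --- is precisely this construction of a bounded approximate identity for $I$. The naive corrections built from a lift $u$ of the unit of $A/I$, such as $e_i - e_i u$ or $e_i - u e_i$ (which do lie in $I$), leave a residual term $u x$ or $x u$ when tested against $x \in I$, and this residue does not vanish in general. That it can be made to vanish is exactly where full amenability of $A$, and not merely the presence of a bounded approximate identity in $A$, must be used: a finite-codimension square-zero ideal --- for instance the ideal $X$ in the unitisation $\C \oplus X$ of a zero-multiplication space $X$ --- shows that an algebra possessing a bounded approximate identity (here even a unit) can contain a finite-codimension closed ideal with no approximate identity at all, and such an $A$ is never amenable. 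I would therefore extract the approximate identity of $I$ from the approximate diagonal of $A$ rather than from the bare approximate identity, and it is this quantitative use of amenability that carries the argument.
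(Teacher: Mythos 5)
Your part (2) is correct: granted Johnson's characterization of amenability by bounded approximate diagonals (which the paper itself quotes in Section 9, citing Runde), pushing the diagonal forward through $\pi_A$ does give a bounded approximate identity, and the module identities you invoke convert the two defining properties of the diagonal into the two approximate-identity limits. Two caveats are worth recording. First, the paper offers no proof of this lemma at all — both parts are imported from Runde's book — so the comparison is against the standard proofs. Second, in the standard development the implication ``amenable $\Rightarrow$ approximate diagonal exists'' is itself proved \emph{using} the existence of a bounded approximate identity, so your argument is a legitimate derivation from a black-box theorem, but it reverses the logical order of the literature and could not serve as a self-contained proof without circularity.

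Part (1) has a genuine gap, and you point at it yourself. Your plan reduces amenability of $I$ to producing a bounded approximate identity inside $I$ (via the hereditary theorem for closed ideals of amenable algebras), you correctly observe that the naive corrections $e_i - e_i u$ and $e_i - u e_i$ leave non-vanishing residues, and then you stop: ``I would therefore extract the approximate identity of $I$ from the approximate diagonal of $A$'' is a statement of intent, not a construction, and that extraction is exactly the hard content of the statement. Nothing in the proposal carries it out. Moreover, the detour through semisimplicity of $A/I$, a lifted unit $u$, and a splitting $A = I \oplus F$ is unnecessary: the hereditary theorem you appeal to (the result behind Runde's Cor.\ 2.3.10) has a third equivalent condition — $I$ is amenable if and only if $I$ is \emph{weakly complemented}, i.e. $I^{\perp}$ is complemented in $A^{*}$ — and this is the condition that finite codimension verifies for free, since $I^{\perp} \cong (A/I)^{*}$ is finite-dimensional and every finite-dimensional subspace of a Banach space is complemented. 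That two-line argument is the actual standard proof; by selecting the bounded-approximate-identity equivalence instead of the weak-complementation one, you steered into the one branch of the theorem that requires real work, and that work is left undone.
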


     \begin{proof}[Proof of \autoref{thm:Ozawa}]

     First, it follows from \autoref{lem:approximate unit} (1) that $\ker\pi$ is amenable as a Banach algebra, 
     since the codimension of $\ker\pi \le W_0(G,\Gamma)$ is one. 

     Then, using \autoref{lem:approximate unit} (2) for $\ker\pi$ 
     we obtain its bounded approximate unit $(e_i)_i \subset \ker\pi$. 
     Fix an arbitrary $x \in X$, 
     and we will show that $\{f_i: g \mapsto \delta(g,e) - e_i(g,x) \}_i \: \subset \ell^1(\Gamma)$ 
     is a right approximate invariant mean for $\Gamma$. 

     First, each $f_i$ belongs to $\mathrm{Prob}(\Gamma)$, since $\pi (e_i) =\sum\limits_g e_i(g) = 0$ in $CX$.

     Next, using that $(e_i)_i$ is an approximate unit for $\ker\pi$, we obtain
     \[
          (1-e_i) \ast (1-\delta_h) = (1-\delta_h) - e_i \ast (1-\delta_h) \xrightarrow{\hspace{2mm}i\hspace{2mm}} 0
     \]
     for $h\in \Gamma$, since $1-\delta_h \in \ker\pi$. 
     Therefore, $e_i \ast \delta_h - e_i \xrightarrow{\hspace{2mm}i\hspace{2mm}} 0$ and 

     \begin{align*}
          \| f_i \ast \delta_h -f_i \|_1 
          &= 1 - 1 - \sum_g | e_i(gh,x) - e_i(g,x) | \\
          & \le \sup_{x\in X}  \sum_g | e_i(gh,x) - e_i(g,x)| \\
          & = \sup_{x\in X} \sum_g |e_i \ast \delta_{h^{-1}} (g,x) - e_i(g,x)| \\
          & = \left\| e_i \ast \delta_{h^{-1}} -e_i \right\|_{A_0} \xrightarrow{\hspace{5mm}i\hspace{5mm}} 0 \; . 
     \end{align*}
     \end{proof}


\section{Fixed--point Characterizations of Amenable Actions}\label{sec:fixed_point_characterizations_of_amenable_actions}

     In contrast to the previous section, part of the proof of Johnson's theorem still remains valid for amenable actions. 
     First, we should formalize the fixed--point characterization of amenable actions. 

     In 2015, Dong and Wang proved the fixed--point theorem for amenable actions with respect to isometric linear actions on Banach spaces \cite{Dong15}. 
     As a preliminary, we introduce an analogue of convex sets in the context of $CX$--Banach modules. 

     \begin{definition}\label{def:CX--convex} For a $\Gamma$--$CX$--module $V$, 
     its subset $K \subset V$ is called \textit{$CX$--convex} if 

     \[
          \sum_{1\le k \le n} p_k.c_k \; \in \; K
     \]
     for all $\bigl\{p_k\bigr\}_{k=1}^n  \subset C(X,[0,1])$ 
     and $\bigl\{c_k\bigr\}_{k=1}^n \subset K$. 

     \hspace{5mm}

     \end{definition}

     We say these $\bigl\{p_k\bigr\}_{k=1}^n$ as a \textit{finite decomposition of $1_X$} 

     and $\sum\limits_{1\le k \le n} p_k.c_k$ as a \textit{$CX$--convex combination}.

     \begin{theorem}[Dong and Wang]\label{thm:DongWang} \: For $\Gamma \act X$, the following are equivalent: 
     \begin{enumerate}
     \item The action $\Gamma \act X$ is amenable. 

     \item For any $\ell ^1$--geometric $\Gamma$--$CX$--module $V$ 
     and for any $CX$--convex, weak*-compact, and nonempty subset $K \subset V^*$ 
     with $\Gamma .K \subset K$, 

     $K$ has a $\Gamma$--fixed point. 

     \end{enumerate}
     \end{theorem}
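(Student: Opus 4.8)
The plan is to follow the template of Day's theorem (\autoref{thm:Day}), replacing ordinary convex combinations by $CX$-convex ones and the isometric $\Gamma$-action by the module structure of \autoref{def:action module}. Throughout I abbreviate $A_0 = A_0(\Gamma,X)$ and $W_0 = W_0(\Gamma,X)$. The two implications use different inputs: $(1)\Rightarrow(2)$ is an averaging argument driven by the approximate means of \autoref{def:amenable action}, while $(2)\Rightarrow(1)$ is a concrete construction of an invariant mean in the sense of \autoref{thm:Nowak1}. The geometric fact used repeatedly is that, by \autoref{lem:Nowak dual}, the dual $V^*$ of an $\ell^1$-geometric module is $\ell^\infty$-geometric, so that for any family $\{p_g\}_g \subset C(X,[0,1])$ and any bounded family $\{w_g\}_g \subset V^*$ one has $\left\| \sum_g p_g.w_g\right\| \le \left\| \sum_g p_g \right\|_\infty \cdot \max_g \|w_g\|$.

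For $(1)\Rightarrow(2)$, fix an $\ell^1$-geometric module $V$, a set $K \subset V^*$ as in $(2)$, and a base point $\xi_0 \in K$. I may assume the approximate mean $(m_i)_i$ of \autoref{def:amenable action} takes finitely supported values (this is standard for amenable actions), so that, writing $(m_i)_g \in C(X,[0,1])$ for the coefficient function $x \mapsto m_i(x)(g)$, the family $\{(m_i)_g\}_g$ is a finite decomposition of $1_X$. Then $\xi_i := \sum_g (m_i)_g . (g.\xi_0)$ is a finite $CX$-convex combination of the points $g.\xi_0 \in \Gamma.K \subseteq K$, hence $\xi_i \in K$; by weak*-compactness a subnet converges weak* to some $\xi_\infty \in K$. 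The crux is the estimate, valid for each $h \in \Gamma$,
\[
\left\| h.\xi_i - \xi_i \right\| \;\le\; \left\| \sum_g \left| (m_i)_{h^{-1}g}^{\,h} - (m_i)_g \right| \right\|_\infty \cdot \|\xi_0\| \;=\; \sup_{x \in X} \left\| m_i(h^{-1}.x) - h^{-1}.(m_i(x)) \right\|_1 \cdot \|\xi_0\|,
\]
whose right-hand side tends to $0$ by \autoref{def:amenable action}. Here the inequality rewrites $h.\xi_i = \sum_g (m_i)_{h^{-1}g}^{\,h}.(g.\xi_0)$ via the compatibility axiom $h.(p.\eta) = p^h.(h.\eta)$ and the reindexing $g \mapsto h^{-1}g$, then splits the signed coefficient family into its positive and negative parts and applies the $\ell^\infty$-geometric inequality to each; the equality is a pointwise reindexing of the $\ell^1$-norm. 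Since $h.(\cdot)$ is weak*-continuous on $V^*$ and $h.\xi_i - \xi_i \to 0$ in norm, passing to the limit along the subnet yields $h.\xi_\infty = \xi_\infty$ for every $h$, so $\xi_\infty$ is the desired fixed point.

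For $(2)\Rightarrow(1)$, I take $V := A_0^*$, which is $\ell^1$-geometric by \autoref{lem:Nowak dual} since $A_0$ is $\ell^\infty$-geometric; thus $V^* = A_0^{**}$. Inside $A_0^{**}$ I consider the weak*-closure $K := \overline{P}^{\,wk*}$ of the set $P := \{ f \in W_0 : f \ge 0,\ \pi(f)=1\}$ of probability elements, which contains $C(X,Prob(\Gamma))$ and so is nonempty. One checks that $P$ is norm-bounded (indeed $\|f\|_{A_0}=1$ on $P$, since $\sum_g|f_g| = \bar\pi(f) = 1_X$) and is preserved, with its positivity and normalization, by both the $\Gamma$-action and $CX$-convex combinations; hence $K$ is weak*-compact by Banach--Alaoglu, is $\Gamma$-invariant, and is $CX$-convex, the last two using that the $\Gamma$- and $CX$-actions on $A_0^{**}$ are weak*-continuous. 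Applying $(2)$ to this $K$ produces a $\Gamma$-fixed point $\mu \in K$. Finally I identify $\mu$ as an invariant mean: since $P \subset W_0$ and $W_0$ is $\Gamma$-invariant and sits in $A_0^{**}$ as the weak*-closed subspace $W_0^{**}$, we get $\mu \in W_0^{**}$; and for any state $\phi_0$ on $CX$ the functional $\tilde\pi := \phi_0 \circ \bar\pi \in A_0^*$ restricts to $\pi$ on $W_0$ and equals $1$ on $P$, so $\mu(\pi) = \langle \mu, \tilde\pi\rangle = 1$. Thus $\mu$ is a $\Gamma$-invariant mean and \autoref{thm:Nowak1} gives amenability of $\Gamma \act X$.

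The main obstacle is the norm estimate in $(1)\Rightarrow(2)$: the $\ell^\infty$-geometric inequality applies cleanly only to nonnegative coefficient families, so one must first express $h.\xi_i$ as a $CX$-convex combination over the \emph{same} translates $\{g.\xi_0\}$ as $\xi_i$ (this is exactly where the compatibility axiom and the reindexing $g \mapsto h^{-1}g$ enter), and then control the difference of two decompositions of $1_X$ by their pointwise $\ell^1$-distance. Arranging the splitting into positive and negative parts so that the resulting constant matches precisely $\sup_x \|m_i(h^{-1}.x) - h^{-1}.(m_i(x))\|_1$ is the delicate computation; once it is in place, both the convergence and the identification of the limit are routine. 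A secondary point requiring care is the weak*-continuity of the module operations on $A_0^{**}$, which is what allows $CX$-convexity and $\Gamma$-invariance to pass from $P$ to its weak*-closure $K$.
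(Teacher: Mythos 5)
Your proposal is correct, but it takes a genuinely different route from the paper --- which, in fact, never proves \autoref{thm:DongWang} itself: the equivalence is cited from Dong and Wang, and what the paper actually proves is the one-directional generalization \autoref{thm:Awazu fixed point}, by a duality argument. There the invariant mean $\mu \in Z_0(\Gamma,X)^{**}$ (a weak* limit of finitely supported, positive, normalized $f_n$) is paired against the functionals $\psi_g^c$, and $\Gamma$--invariance of $\mu$ forces $\psi(\alpha_g(\tilde{c})) = \psi(\tilde{c})$ for every $\psi$. Your $(1)\Rightarrow(2)$ instead runs the classical Day averaging directly in $V^*$: the element $\xi_i = \sum_g (m_i)_g.(g.\xi_0)$ lies in $K$ by $CX$--convexity, and the $\ell^\infty$--geometric inequality on $V^*$ (\autoref{lem:Nowak dual}(1)) converts approximate equivariance of $m_i$ into $\|h.\xi_i - \xi_i\| \to 0$ before passing to a weak* cluster point. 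This is simpler and presumably close to Dong--Wang's original argument, but note what it costs: it genuinely uses the norm on a dual Banach module and the linearity of the $\Gamma$--action, so it cannot substitute for the paper's proof of \autoref{thm:Awazu fixed point}, where $V$ is merely locally convex, the action is affine with a nonzero cocycle, and no norm estimate is available --- and that stronger statement is exactly what Step 2 of the proof of \autoref{thm:main} needs, since the action $\alpha_g(\tau) = \delta_g.\tau.\delta_{g^{-1}} - D(\delta_g).\delta_{g^{-1}}$ there is affine, not linear. Your $(2)\Rightarrow(1)$ (take $V = A_0^*$, which is $\ell^1$--geometric by \autoref{lem:Nowak dual}(2)(3), let $K$ be the weak* closure in $A_0^{**}$ of the positive normalized part of $W_0$, and read the fixed point as an invariant mean via \autoref{thm:Nowak1}) has no counterpart in the paper at all and soundly supplies the converse that the paper only cites; it is a nice complement, consistent in style with the invariant-mean constructions the paper uses elsewhere. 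Two cosmetic points: splitting the signed coefficients into positive and negative parts gives your displayed estimate only up to a factor $2$, since each of the two resulting sup-norms is bounded by $\left\| \sum_g \left| (m_i)_{h^{-1}g}^{\,h} - (m_i)_g \right| \right\|_\infty$ rather than sharing it --- harmless for the limit; and the finitely supported approximate means you invoke as standard are indeed also asserted in the paper's proof of \autoref{thm:Awazu fixed point}.
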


     We slightly generalize this theorem to apply it to the affine action $\alpha$ of \autoref{def:alpha}. 
     As a preliminary, we recall the structure of affine actions of groups.

     Let $\alpha: \Gamma \act V$ be an affine action on a linear space $V$. 
     Then there exists a linear action $\hat{\alpha}: \Gamma \act V$ and a cocycle map $c:\Gamma \rightarrow V$ such that 

     \[
       \alpha_g (v) = \hat{\alpha}_g (v) + c_g  \; .
     \]
     Here the cocycle satisfies $c_{gh} = c_g + \hat{\alpha}_g (c_h)$ . 
     We write $\alpha = (\hat{\alpha},c)$ for this decomposition. 


     For a locally convex space $V$, we introduce the topological vector space $L_{\sigma}(V)$, 
     which is the set of linear maps on $V$ endowed with the point-$V$ topology. 

     \begin{definition}\label{def:G--CX LCmod} For $\Gamma \act X$, 
     a locally convex space $V$ is called a \textit{$\Gamma$--$CX$ locally convex module} 
     if it is equipped with the following: 

     \begin{itemize}
     \item The group $\Gamma$ acts on $V$ affinely and pointwise--continuously named as $\alpha = (\hat{\alpha},c)$. 

     Here this continuity means that for each $g \in \Gamma$, $\alpha_g : V \rightarrow V$ is continuous. 

     \item There is a continuous linear unital map $\beta: CX \rightarrow L_{\sigma}(V)$ 

     which satisfies $\hat{\alpha_g} \circ \beta(p) \circ \hat{\alpha}_{g^{-1}} = \beta(p^g)$.  

     \end{itemize}

     \end{definition}

     We write $\beta (p,v)$ instead of $\beta (p)(v)$ for $p \in CX$, $v\in V$.

     Then, $CX$--convexity is defined same as \autoref{def:CX--convex} for $\Gamma$--$CX$ locally convex module.

     In this setting, we prove the generalized version of \autoref{thm:DongWang}: 

     \begin{theorem}\label{thm:Awazu fixed point} Let $\Gamma \act X$ be an amenable action and  
     $(V; \alpha = (\hat{\alpha},c) ;\beta)$ be a $\Gamma$--$CX$ locally convex module with respect to $\Gamma \act X$. 

     If a subset $K \subset V$ is $\alpha$--invariant, $CX$--convex, compact, and non--empty, 
     then $K$ has an $\alpha$--fixed point.   

     \end{theorem}


     For the proof, we take a smaller algebra $Z_0(\Gamma,X)$ inside $W_0$. 

     \begin{definition}\label{def:Z_0} For $\Gamma \act X$,
     we define $Z_0 ^+(\Gamma,X)$ as a positive cone in $W_0(\Gamma,X)$, with 
     \[
     Z_0 ^+ (\Gamma,X) := \{f\in W_0 \mid \Forall g\in \Gamma; f(g) \ge 0 \text{ in } CX \}
     \] 

     and define its generating Banach space $Z_0(\Gamma,X)$ by 
     \[
          Z_0(\Gamma,X) :=  
          \overline{ \left\{ \sum_{0\le k \le 3} \sqrt{-1} ^k f_k \middle| f_k \in Z_0^+ \right\} } ^{\|\|_{A_0}}. 
     \] 
     \end{definition} 

     \vspace{5mm} 


     Then, we can compute the norm as follows: 
     \begin{align}
          \left\|\sum_{0\le k \le 3} \sqrt{-1} ^k f_k\right\|_{A_0} 
          = \sum_{0 \le k \le 3} \pi(f_k) 
          = \sum_{0 \le k \le 3} \|f_k\| \label{eq:norm of Z_0} 
     \end{align}



\begin{proof}[Proof of \autoref{thm:Awazu fixed point}]
     \kaigyo 

     \textbf{Step.1} \quad Find a fixed point. 

     We have an invariant mean $\mu \in W_0(\Gamma,X)^{**}$, since $\Gamma \act X$ is amenable. 

     The proof of \autoref{thm:Nowak1} shows that 
     $\mu$ is weak--* limit point of $\{ f_n \}_{n\in \mathbb{N}} \subset Z_0 ^+$ 
     and $\pi(f_n) = \| f_n\| =1$ for all $n$. 
     Therefore $\mu$ can be viewed as an element of $Z_0(\Gamma,X)^{**}$. 
     Moreover, the proof shows we can take each $f_n$ to be finitely supported. 

     Fix $c_0 \in K$. Then a fixed point $\tilde{c}$ can be obtained as 

     \[
          \tilde{c} \: \in \: \text{accumulation points of  }\left\{ 
          \sum_{g \in \Gamma} \beta (f_n (g), \alpha_g (c_0)) \right\}_n  \; .
     \]
     This accumulation point exists and belongs to $K$ for the following reasons: 

     The assumption shows that $\alpha _g (c_0) \in K$, and for each $n$, $\sum\limits_{g \in \Gamma} \beta (f_n (g), \alpha_g (c_0))$ is also in $K$, 
     since we take $\{ f_n(g) \}_g$ is a finite decomposition of $1_X$ and $K$ is $CX$--convex. 
     Because $K$ is compact, we can take an accumulation point of the sequence in $K$.

     We must show 
     \[
          \psi(\alpha _g(\tilde{c})) = \psi(\tilde{c}) \quad \Forall \psi \in E^*, g \in G
     \]
     which implies that $\tilde{c}$ is a $\Gamma$--fixed point.

     \hspace{5mm}

     \textbf{Step.2} \quad Define $\psi_g ^c \in Z_0(\Gamma,X)^*$ for $g\in \Gamma$ and $c \in K$ . 

     The definition is 
     \[
     Z_0(\Gamma,X) \ni f \mapsto 
     \left\langle\psi ,\: \alpha_g  \left( \sum_{h \in \Gamma} \beta(f(h), \alpha _h (c)) \right) \right\rangle \; .
     \]

     To show linearity, we compute as follows. 
     For conciseness, we write $g.c$ instead of $\hat{\alpha}_g (c)$ and have 
     $g.\beta(p,g^{-1}v) = \beta(p^g,v)$ for $g \in \Gamma$, $p\in CX$, $v\in V$ by assumption.

     \begin{align}
     \alpha_g  \left( \sum_{h \in \Gamma} \beta(f(h), \alpha _h (c)) \right)
     &= \sum_h g. \beta(f(h),h.c) + c_g + \sum_h g.\beta(f(h),c_h) \notag\\
     &= \sum_h \beta(f(h)^g , gh.c ) + c_g + \sum_h \beta(f(h)^g, g.c_h) \notag\\
     &= \sum_{gh\:\text{as}\:h} \beta( f(g^{-1}h)^g, h.c) + c_g + \sum_h \beta(f(h)^g , c_{gh} - c_g) \notag\\
     &= \sum_h \beta( (g.f)(h), h.c ) + c_g + \sum_h \beta((g.f)(h) , c_h ) - \beta( \left(\sum_h f(h) \right)^g , c_g) \\ 
     &= \sum_h \beta( (g.f)(h), h.c + c_h ) + c_g - c_g \notag\\
     &= \sum_h \beta( (g.f)(h), \alpha_h (c) ) \; . \notag
     \end{align}

     (Note that continuity of $\beta$ and that $\beta$ is unital are used for (4.2).)

     Therefore, $\psi_g ^c$ is linear because the action $\Gamma \act Z_0(\Gamma,X)$ and $\beta$ are linear. 

     \hspace{5mm}

     For the proof of boundedness of $\psi_g ^c$, the smaller space $Z_0$ is essential. 
     First, it suffices to show the case $g=e$, 
     since $\psi_g ^c(f) = \psi_e ^c (g.f) $ and $\Gamma \act Z_0(\Gamma,X)$ is isometric. 
     Moreover, it suffices to show $| \psi_e ^c(f) | \le \|\psi\| \cdot \| f \|_{A_0}$ 
     for finitely supported $f =\sum\limits_{0\le k \le 3} \sqrt{-1}^k f_k \in Z_0$ with $f_k \in W_0 ^+$. 

     Using \autoref{eq:norm of Z_0}, we can compute as follows: 

     \begin{align*}
     |\psi_e^c (f)| 
     &= \left| \left\langle \psi, \sum_{0 \le k \le 3 ,g \in \Gamma} \sqrt{-1}^k \beta(f_k(g),\alpha_g (c)) \right\rangle \right| \\
     &\le \sum_k \left| \left\langle \psi, \sum_g \beta(f_k(g) , \alpha_g (c)) \right\rangle \right|  \\
     &= \sum_k \pi(f_k) \left| \left\langle \psi, \sum_g \beta\left( \frac{f_k(g)}{\pi(f_k)} , \alpha_g (c) \right) \right\rangle \right| \; .
     \end{align*}

     Since $\left\{ \frac{f_k(g)}{\pi(f_k)} \right\}_{g \in \Gamma} $ is a finite decomposition of $1_X$ for each $k$, \:
     $\sum\limits_g \beta\left( \frac{f_k(g)}{\pi(f_k)} , \alpha_g (c)\right)$ belongs to $K$. 
     Thus, compactness of $K$ implies: 

     \[ 
     \left| \left\langle \psi, \sum_g \beta\left( \frac{f_k(g)}{\pi(f_k)} , \alpha_g (c) \right) \right\rangle \right| 
     \: \le \: \max_{c\in K} |\psi(c)| 
     < \infty \; .
     \]

     Therefore, 
     \[
     |\psi_e^c (f)| 
     \le \sum\limits_k \pi(f_k) \cdot  \max\limits_{c\in K} |\psi(c)| 
     = \|f\|_{A_0} \max\limits_{c\in K} |\psi(c)| \; .
     \]

     This shows $\psi_e ^c$ is a bounded functional. 

     \hspace{5mm}

     \textbf{Step3.} \quad Show $\Gamma$--fixedness of $\tilde{c}$ . 

     Since $\mu \in Z_0(\Gamma,X) ^{**}$ is the weak*--limit of (subsequence of) $\{f_n\} \subset Z_0$, 
     we can compute as follows: 

     \begin{align*}
     \mu (\psi_g ^{c_0}) 
     &= \lim_n \left\langle \psi, \alpha_g \left(  \sum_g \beta(f_n(g), \alpha_g (c_0) ) \right) \right\rangle \\
     &= \left\langle \psi, \lim_n \alpha_g \left(  \sum_g \beta(f_n(g), \alpha_g (c_0) ) \right) \right\rangle \\
     &= \left\langle \psi, \alpha_g \left(\lim_n  \sum_g \beta(f_n(g), \alpha_g (c_0) ) \right) \right\rangle \\ 
     &= \left\langle \psi, \alpha_g (\tilde{c}) \right\rangle \; .
     \end{align*}

     Here pointwise--continuity of $\alpha$ is used. 

     Meanwhile, the computation in Step.2 and $\Gamma$--invariance of $\mu$ shows 

     \[
          \mu(\psi_g ^{c_0}) 
          = \lim_n \psi_g ^{c_0} (f_n) 
          = \lim_n \sum_{h \in \Gamma} \beta( (g.f)(h) , \alpha_h(c_0)) \\
          = \lim_n \psi_e ^{c_0} (g.f_n) 
          = (g.\mu) (\psi_e ^{c_0}) 
          = \mu (\psi_e ^{c_0}) \; .
     \]

     Combined with these calculations, we obtain $\psi(\alpha_g (c_0)) = \psi(c_0)$.  

     \end{proof}


\section{Johnson's Theorem for Topological Actions}\label{sec:johnson_s_theorem_for_topological_actions}

     As we noted in \autoref{sec:amenability_of_w_0_gamma_x_}, 
     we should weaken the amenability of Banach algebras to characterize the amenability of actions: 

     \begin{definition}\label{def:right--CX--ell1} For a Banach algebra $A$ that includes $CX$ as a Banach subalgebra, 
     \begin{enumerate}
     \item 
     We say that an $A$--$A$--bimodule $E$ is \textit{right--$CX$--$\ell^1$--geometric} if 
     the right action $E \tca CX$ obtained by restricting the action of $A$, is $\ell ^1$--geometric. 

     \item
     We say that $A$ is \textit{right--$CX$--$\ell^1$--amenable} if 
     for any $A$--$A$--bimodule $E$ that is right--$CX$--$\ell^1$--geometric, and 
     for any bounded derivation $D: A \rightarrow E^*$, $D$ is inner. 

     \end{enumerate}
     \end{definition}

     And we show the following: 

     \begin{theorem}\label{thm:main} For $\Gamma \act X$, the following are equivalent:

     \begin{enumerate}

     \item The action $\Gamma \act X$ is amenable. 

     \item The Banach algebra $A_0(\Gamma ,X)$ is right--$CX$--$\ell^1$--amenable. 

     \end{enumerate}
     \end{theorem}

     In this section, we prove only (1)$\Rightarrow$(2). 

     \begin{proof}[Proof of \autoref{thm:main} (1)$\Rightarrow$(2)] 

     \hspace{5mm}

     Take an $A_0$--$A_0$--bimodule $E$ that is right--$CX$--$\ell^1$--geometric, 
     and a bounded derivation $D: A_0 \longrightarrow E^*$. 

     \hspace{5mm}

     \textbf{Step 1.} \quad Show that we may assume that $D$ is left $CX$--equivariant (i.e., $D(p\ast f) = p.D(f)$). 

     Think of the restriction of $D$ to $CX$, 
     then $D|_{CX} : CX \longrightarrow _{CX} E^* _{CX} $ is a derivation again. 

     Since $CX$ is a commutative $C^*$-algebra and thus an amenable Banach algebra, $D|_{CX}$ is inner. 
     That is, there exists $\tau_0 \in E^*$ such that 
     $ D(p) = p.\tau_0 - \tau_0 .p $ for all $p \in CX$.  

     \vspace{2mm}

     Then $D - \Ad_{\tau_0}$ is a left $CX$--equivariant bounded derivation. Indeed, 
     \[
          (D - \Ad_{\tau_0}) (p \ast f) = (D - D|_{CX})(p).f + p.(D - \Ad_{\tau_0})(f) = p.(D - \Ad_{\tau_0})(f) \; .
     \]

     If we showed the theorem for left $CX$--equivariant derivations, 
     then we obtain $\tau$ with 
     \[
     D - \Ad_{\tau_0} = \Ad_{\tau} \; .
     \] 
     This shows $D = \Ad_{\tau + \tau_0} $ and $D$ is inner. 
     We remark that this technique can be used for any amenable subalgebras. 

     \hspace{2mm}

     \textbf{Step 2.} Use the fixed point theorem \autoref{thm:Awazu fixed point}.

     As in the proof of Theorem 1.6, we set an affine action $\alpha: \Gamma \act E^* $ by 

     \[
          \alpha _g (\tau) \: := \: \delta _g . \tau . \delta _{g ^{-1}} \: - D(\delta _g) . \delta _{g ^{-1}} \; . 
     \]

     Set $\beta : CX \rightarrow L_{\sigma}(E^*)$ as $\beta(p)(\tau) := p.\tau$ for $\tau \in E^*$ and $p \in CX$. 

     Each $\alpha_g$ is continuous, and $\beta$ is unital, linear, and continuous. Moreover, 
     \[ 
     \delta_g .(p.(\delta _{g\inv} . \tau . \delta_g)). \delta_{g\inv } = p^g .\tau 
     \]
     shows the compatibility between $(\alpha,\beta)$ and $\Gamma \act X$. 

     Therefore $(E^*, \alpha, \beta)$ is a $\Gamma$--$CX$ locally convex module. 

     Define $K^{\circ} \subset E^*$ to be 
     \[
          K^{\circ} := CX\text{--convex combinations of }\{ - D(\delta _g) . \delta _{g ^{-1}} \mid g \in \Gamma \}. 
     \]

     Now, the norm of each $c_g := - D(\delta _g) . \delta _{g ^{-1}} \in E^*$ is less than $\|D\|$. 
     Then we can show that 
     the assumption that $E$ is right--$CX$--$\ell^1$--geometric ensures that whole $K^{\circ}$ is norm--bounded by $\|D\|$.

     By \autoref{lem:Nowak dual} (1), $CX \act E^*$ is ''left--$CX$--$\ell^{\infty}$--geometric'', i.e., 

     \[
          \left\| \sum_{1\le k \le n} p_k.\tau_k \right\|_{E^*} \: \le \: \left\| \sum_{1\le k \le n} p_k \right\|_{\infty} \cdot \sup_{1\le k \le n} \|\tau_k\|_{E^*}
     \]

     and thus $K^{\circ}$ is norm--bounded by $\|D\|$. 

     Therefore, $K:= \overline{K^{\circ}} ^{wk*}$ is also bounded and weak*--compact by the Banach--Alaoglu theorem. 
     By construction, $K$ is $CX$--convex set with respect to $\beta$. 

     \vspace{2mm}

     Then, we can apply \autoref{thm:Awazu fixed point} to these $(E^*, \alpha, \beta, K)$ 
     and obtain $\tau_* \in E^*$ such that 
     \[
     D(\delta_g) = \delta_g.\tau_* - \tau_* .\delta_g
     \] 
     for all $g \in \Gamma$. 
     Now that we have assumed $D$ is left $CX$--equivariant, therefore 
     \[
          D(p \ast \delta_g) = p.D(\delta_g) = (p \ast \delta_g).\tau_* - p.\tau_* .\delta_g
          \quad \text{for all }p \in CX, \: g\in \Gamma .
     \]
     \hspace{5mm}

     \textbf{Step 3.} Show that $\tau_*$ is $CX$--central (i.e., $p.\tau_* = \tau_* .p$) and that $D = \Ad_{\tau_*}$. 

     First, we can confirm that $c_g := -D(\delta_g).\delta_{g^{-1}}$ is $CX$--central 

     using $D$ is a derivation and $CX$--equivariant: 

     \begin{align*}
     p.D(\delta_g).\delta_{g^{-1}} 
     &= D(p\ast\delta_g).\delta_{g^{-1}} \\
     &= D(p\ast \delta_g \ast \delta_{g^{-1}}) - (p\ast \delta_g).D(\delta_{g^{-1}}) \\
     &= D(p) - (\delta_g \ast p^{g^{-1}}). D(\delta_{g^{-1}}) \\
     &= p.D(1) - \delta_g . D(p^{g^{-1}}\ast \delta_{g^{-1}})  \\
     &= 0 - \delta_g . D(\delta_{g^{-1}} \ast p) \\
     &= - D(p) + D(\delta_g).(\delta_{g^{-1}} \ast p) \\
     &= 0 + (D(\delta_g).\delta_{g^{-1}}).p \\
     \end{align*}

     The $CX$--centrality is preserved for taking $CX$--convex combinations since $CX$ is commutative. 
     Therefore whole $K^{\circ}$ is $CX$--central and so is $\tau_* \in K$.

     By Step 2 and 3, we obtain

     \[
          D(p \ast \delta_g) = (p \ast \delta_g).\tau_* - \tau_* .(p\ast\delta_g)
          \quad \text{for all }p \in CX, \: g\in \Gamma .
     \]

     Since $A_0(\Gamma,X)$ is generated by $\{ p \ast \delta_g \mid p\in CX, \: g\in \Gamma \}$ as a Banach space, 
     it follows that $D = \Ad_{\tau_*}$.  

     \end{proof}

\section{Measurewise Amenability and the Proof of \autoref{thm:main}} 
\label{sec:measurewise_amenability_and_the_proof_of_autoref_thm_main}

     The content of this chapter is based on a proof devised by N. Ozawa in a personal communication. 

     We need to work with measurewise amenability to prove (2)$\Rightarrow$(1) of \autoref{thm:main}. 
     These notations come from topological groupoids \cite{AD00}; in a more general setting. 
     They are overviewed in \autoref{sec:appendix_topological_measured_groupoid} 
     and shown to be compatible with definitions in this section. 

     \begin{definition}\label{def:measurewise amen} For a compact Hausdorff space $X$, 
     
          \begin{enumerate}

          \item We call a Borel measure $\mu$ on $X$ \textit{quasi--$\Gamma$--invariant} if 
          null--sets of $\mu$ and null--sets of $g.\mu$ coincide for all $g\in\Gamma$. 

          \item For a measure $\mu$ on $X$, we denote by $\bar{\mu}$ 
          the product measure on $\Gamma \times X$ of the counting measure on $\Gamma$ and $\mu$. 

          \item Define an isometric action $\Gamma \act L^{\infty}(\Gamma\times X)$ by 
          \[
               g.\varphi(h,x) := \varphi(g\inv h,g\inv .x) 
               \text{ for }\varphi \in L^\infty(\Gamma\times X),\; g,h\in \Gamma, \;x\in X. 
          \]

          \item We say $\Gamma \act X$ is \textit{measurewise amenable} 
          if for any quasi--$\Gamma$--invariant measure $\mu$, 

          the transformation groupoid $(X,\mu)\rtimes \Gamma$ is amenable as a measured groupoid.  

          That is, there exists a \textbf{contractive} $\Gamma$--equivariant map 

          \[
               P:L^{\infty}(\Gamma \times X, \bar{\mu}) \to L^{\infty}(X,\mu) 
          \] which satisfies 
          \[
               P(\delta_g \ast \xi) = P(\xi)^g \: \text{ for any }\xi \in L^\infty(\Gamma\times X)
          \]
               where $(\delta_g \ast \xi)(h,x) := \xi(g\inv h,g\inv .x)$, and 

          \[
               P(1_\Gamma \otimes \xi_X) = \xi_X \:\text{ for any } \xi_X \in L^\infty(X) . 
          \]

          \end{enumerate}
          \end{definition} 

     \begin{theorem} \label{thm:fact of measurewise amenable}
     For a topological action $\Gamma \act X$, the following are equivalent: 

     (1) The action $\Gamma \act X$ is amenable in the sense of \autoref{def:amenable action}. 

     (2) The action $\Gamma \act X$ is measurewise amenable. 
     \end{theorem}
     This is a special case of \autoref{thm:measurewise vs topo amen grpd}. 

     \vspace{5mm} 

     Therefore it suffices to construct 
     $P:L^{\infty}(\Gamma \times X, \bar{\mu}) \to L^{\infty}(X,\mu)$ for a fixed quasi--$\Gamma$--invariant $\mu$ on $X$, 
     to prove the amenability of $\Gamma \act X$. 
     We will write $\xi$ for an element in $L^\infty(\Gamma \times X)$. 

     \begin{itemize}
     \item Define two different actions $A_0(\Gamma,X)\act L^\infty(\Gamma \times X)$ as follows; 
     \[
          (a \ast \xi)(g,x) := \sum_{h\in \Gamma} a(h,x) \xi(h\inv g, h\inv .x)
     \]
     \[
          (a \star \xi)(g,x) := \sum_{h\in \Gamma} a(h,x) \xi(g, h\inv .x) 
     \]

     We note that this $\ast$ is an extension of $\delta_g\ast\xi$ in \autoref{def:measurewise amen} (4). 

     Then $\| a \ast \xi \|_\infty \le \|a\|_{A_0} \|\xi\|_\infty$ and similarly for $a \star \xi$.  

     The $\star$ action satisfies associativity: 
     \begin{align*}
          (a_1\ast a_2) \star \xi (g)
          &= \sum_h (a_1\ast a_2)(h) \cdot \xi(g)^h \\
          &= \sum_{h,k} a_1(k) \cdot a_2(k\inv h)^k \cdot \xi(g)^h \\
          &= \sum_k a_1(k) \cdot (\sum_h a_2(k\inv h) \cdot \xi(g)^{k\inv h} a)^k \\
          &= \sum_k a_1(k) \cdot (a_2\star \xi (g))^k \\
          &= a_1 \star (a_2 \star \xi) (g)
     \end{align*}

     \item Then $F^* := B(L^\infty(\Gamma \times X))$ is endowed with an $A_0$--$A_0$--bimodule structure 

     with the left action coming from $\star$--action on the range: 
     \[
          a.\tau (\xi) := a \star (\tau(\xi)) 
     \] for $\tau \in B(L^\infty(\Gamma \times X))$, $\xi \in L^\infty(\Gamma \times X)$, 

     and the right action coming from $\ast$--action on the domain: 
     \[
          \tau.a (\xi) := \tau(a \ast \xi) \; .
     \]

     \item This $F^*$ is the dual of 
     \[
     F:= L^\infty(\Gamma\times X) \projten L^1(\Gamma \times X) \; .
     \]
     Moreover, $A_0$--actions arise from those of on $F$, 
     since this action is weak--* continuous. 
     Indeed, $(V_1\projten V_2)^* \cong Bilin(V_1 \times V_2) \cong B(V_1,V_2^*)$ is valid 
     for any Banach spaces $V_1, V_2$. 

     \item The restricted right action $F \tca CX$ coincides with 
     \[
          (\eta \otimes \zeta).p = \eta \otimes (p\cdot \zeta) 
     \] 
     for $\eta \in L^\infty(\Gamma \times X)$, $\zeta \in L^1(\Gamma \times X)$, 
     with $(\zeta.p)(g,x) := p(x)\zeta(g,x)$. 

     This action is induced by $\ell^1$--geometric--action $CX \act L^1(\Gamma \times X,\bar{\mu})$, 
     and by \autoref{lem:tensor and geometricity} the resulting action is also $\ell^1$--geometric. 
     Thus $F$ is right--$CX$--$\ell^1$--geometric bimodule. 

     \item For $\mathrm{id} \in B(L^\infty(\Gamma \times X))$ and any $a \in A_0$, 
     the element $a.\mathrm{id} - \mathrm{id}.a \in B(L^\infty(\Gamma \times X))$ annihilates $1_\Gamma \otimes L^\infty(X,\mu)$.  

     Indeed, for $\xi_X \in L^\infty(X)$, $p\in CX$, $g\in \Gamma$, 
     \begin{align*}
          ((p\ast \delta_g).\mathrm{id} - \mathrm{id}.(p\ast \delta_g)) (1_\Gamma \otimes \xi_X) 
          &= (p\ast \delta_g) \star (1_\Gamma \otimes \xi_X) - (p \ast \delta_g) \ast (1_\Gamma \otimes \xi_X) \\
          &= 1_\Gamma \otimes (p\cdot \xi_X ^g) -  (g.1_\Gamma) \otimes (p\cdot \xi_X ^g) \\
          &= 0 \; .
     \end{align*}

     Moreover $a.\mathrm{id} - \mathrm{id}.a$ is (weak--*)--(weak--*) continuous as in $B(L^\infty(\Gamma \times X))$ for any $a\in A_0$. 
     Therefore, $a.\mathrm{id} - \mathrm{id}.a$ belongs to 
     \[
     E^* :=  \{ \tau \in B(L^\infty(\Gamma \times X)) \mid \tau \text{ is (weak--*)--(weak--*) continuous 
     and annihilates }1_\Gamma \otimes L^\infty(X,\mu) \} \; .
     \]

     This $E^*$ is norm--closed in $F^*$ since in general, 
     \[
          \{\tau \in B(E_1^* ,E_2^* ) \mid \tau \text{ is (weak--*)--(weak--*) continuous} \} 
          \cong B(E_2, E_1) 
     \]
     for any Banach spaces $E_1$ and $E_2$. 
     Moreover, $E^*$ is weak--*--closed in $F^*$. 
     Therefore $E^*$ is the dual of some quotient Banach space $E$ of $F$. 


     \item Moreover, $E^*$ is an $A_0$--$A_0$--subbimodule of $F^*$. Indeed, 
     \[
          (p\ast\delta_g).\tau (1_\Gamma \otimes \xi_X) = (p\ast\delta_g) \star 0 = 0
     \]
     \[
          \tau.(p\ast\delta_g) (1_\Gamma \otimes \xi_X) = \tau( g.1_\Gamma \otimes (p\cdot\xi_X^g)) = 0 
     \]
     for $\tau \in E^*$, $\xi_X \in L^\infty(X)$, $p\in CX$, $g\in\Gamma$. 
     Similarly to $F$, the bimodule structure of $E^*$ comes from that of $E$.
      
     The space $E^*$ is also an $\ell^\infty$--geometric module with left $CX$--action, 
     since $E^*$ is closed subspace of $F^*$ which is also $\ell^\infty$--geometric. 

     \item Therefore we obtain a bounded map 
     \[
         D: A_0(\Gamma,X) \to _{A_0}E^*_{A_0} 
     \]
     defined by $D(a) := a.\mathrm{id} - \mathrm{id}.a$. 
     One checks that $D$ is a derivation. 

     Then the hypothesis of \autoref{thm:main} (2) provides $\tau_0 \in E^*$ such that 
     \[
          D(a) = a.\tau_0 - \tau_0.a 
     \]
     i.e., $\mathrm{id} - \tau_0 \in F^*$ is $A_0$--central. 

     In particular, $\mathrm{id} - \tau_0 : L^\infty(\Gamma\times X) \to L^\infty(\Gamma\times X)$ is $CX$--linear. 
     Furthermore, $\mathrm{id} - \tau_0$ is $L^\infty(X)$--linear 
     since both of $\tau_0 \in E^*$ and $\mathrm{id}$ are (weak--*)--(weak--*) continuous. 

     \item Now define $P:L^{\infty}(\Gamma \times X, \bar{\mu}) \to L^{\infty}(X,\mu)$ by 
     \[
          P(\xi) := ev_e[(\mathrm{id} - \tau_0)(\xi)] \text{ for }\xi \in L^\infty(\Gamma \times X)  
     \]
     where $ev_e : L^{\infty}(\Gamma \times X, \bar{\mu}) \to L^{\infty}(X,\mu)$ is the evaluation at $e\in \Gamma$. 
     Then 
     \begin{align*}
          P(\delta_g \ast \xi) 
          &= ev_e [ (\mathrm{id} - \tau_0) (\delta_g \ast \xi) ] \\
          &= ev_e [ \delta_g \star ((\mathrm{id} - \tau_0)(\xi))] \\
          &= (ev_e [(\mathrm{id} - \tau_0)(\xi)])^g \\
          &= P(\xi)^g 
     \end{align*}
     shows $P$ is $\Gamma$--equivariant map. 

     Moreover $P$ is identity on $1_\Gamma \otimes L^\infty(X)$. 
     Indeed 
     \begin{align*}
          P(1_\Gamma \otimes \xi_X) 
          &= ev_e[1_\Gamma \otimes \xi_X - \tau_0(1_\Gamma \otimes \xi_X) ] \\
          &= ev_e[1_\Gamma \otimes \xi_X - 0] \\
          &= \xi_X \; .
     \end{align*} 

     Thus we obtain $P:L^\infty(\Gamma\times X)\to L^\infty(X)$; 
     a bounded, unital, $L^\infty(X)$--linear, $\Gamma$--equivariant map.

     \item However, this $P$ is not positive (and thus not contractive) in general, 
     and we should transform it into a positive map $\tilde{P}$. 

     To do so, we first approximate $P$ by finitely supported maps using the method in Theorem 3.3 of \cite{AD87}. 
     Then we transform this approximations into positive maps: 

     \begin{lemma}\label{lem:approximate by finitely supported}
     Let $\Gamma \act (X,\mu)$ be a quasi--invariant action on a standard probability space. 
     Then the following are equivalent: 
     \begin{enumerate}
          \item There exists a bounded, unital, $L^\infty(X)$--linear, $\Gamma$--equivariant map 
          $P:L^\infty(\Gamma\times X)\to L^\infty(X)$. 

          \item There exists a bounded, $*$--preserving, unital, $L^\infty(X)$--linear, $\Gamma$--equivariant map
          $P:L^\infty(\Gamma\times X)\to L^\infty(X)$. 
          
          \item There exists a bounded net $(P_i:\Gamma \to L^\infty(X))_i$ of finitely supported functions satisfying  
          \begin{equation}
               \sum_{g\in\Gamma} P_i(g) \xrightarrow{i,\: ultraweak} 1_X \label{eq:P_i is sum1}
          \end{equation}
          \begin{equation} 
               \sum_{g\in\Gamma} ((h.P_i)(g) - P_i(g))\cdot \xi(g) \xrightarrow{i,\: ultraweak} 0 
               \text{ for all }\xi \in L^\infty(\Gamma\times X), h\in \Gamma \label{eq:P_i is approx equiv} \; .
          \end{equation} 
          
          \item There exists a bounded net $(P_i:\Gamma \to L^\infty(X))_i$ of positive valued and finitely supported 
          satisfying \autoref{eq:P_i is sum1}, \autoref{eq:P_i is approx equiv}, and 
          \[
               \sum_{g\in\Gamma} P_i(g) \le 1_X \; .
          \] 
          
          \item There exists a positive, unital, $L^\infty(X)$--linear, $\Gamma$--equivariant map 
          $P:L^\infty(\Gamma\times X)\to L^\infty(X)$ 

          (Thus $P$ is a conditional expectation.)

     \end{enumerate}
     \end{lemma}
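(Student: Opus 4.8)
The plan is to prove the five statements equivalent by running the cycle $(5)\Rightarrow(2)\Rightarrow(1)\Rightarrow(3)\Rightarrow(4)\Rightarrow(5)$, noting that for the application to \autoref{thm:main} only the chain $(1)\Rightarrow(3)\Rightarrow(4)\Rightarrow(5)$ is actually used (it upgrades the bounded $\Gamma$--equivariant map of condition (1), built in the previous section, into the positive, hence contractive, conditional expectation witnessing measurewise amenability). Several arrows are immediate: $(2)\Rightarrow(1)$ and $(4)\Rightarrow(3)$ merely forget a property, and $(5)\Rightarrow(2)$ holds because a positive map between $C^*$--algebras is automatically $*$--preserving. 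So the real content sits in $(1)\Rightarrow(3)$, in $(3)\Rightarrow(4)$, and in $(4)\Rightarrow(5)$.

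For $(1)\Rightarrow(3)$ I would invoke the weak$^*$--density (Reiter--type) method of Theorem 3.3 of \cite{AD87}. Regard each finitely supported $Q:\Gamma\to L^\infty(X)$ as the finite--rank $L^\infty(X)$--linear map $\Phi_Q(\xi):=\sum_g Q(g)\cdot\xi(g,\cdot)$; these form a convex subset of $B(L^\infty(\Gamma\times X),L^\infty(X))=(L^\infty(\Gamma\times X)\projten L^1(X))^*$, so the point--ultraweak topology on maps is the weak$^*$ topology of this dual. The method of \cite{AD87} produces a bounded net $(P_i)$ of finitely supported functions with $\Phi_{P_i}\to P$ in this topology. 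Testing at the constant function $1_{\Gamma\times X}$ gives $\sum_g P_i(g)=\Phi_{P_i}(1_{\Gamma\times X})\to P(1_{\Gamma\times X})=1_X$, which is \autoref{eq:P_i is sum1}; and the exact $\Gamma$--equivariance $P(\delta_h\ast\xi)=h.P(\xi)$ forces $\Phi_{P_i}(\delta_h\ast\xi)-h.\Phi_{P_i}(\xi)\to0$ ultraweakly for each fixed $\xi$ and $h$, which is precisely \autoref{eq:P_i is approx equiv}. Standardness of $(X,\mu)$ is used to guarantee the duality $L^\infty=(L^1)^*$ and the attendant weak$^*$ compactness.

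The implication $(3)\Rightarrow(4)$ is the crux and I expect it to be the main obstacle. The naive remedy, replacing $P_i(g)$ by its pointwise absolute value $|P_i(g)|$, is governed by $\bigl||(h.P_i)(g)|-|P_i(g)|\bigr|\le|(h.P_i)(g)-P_i(g)|$, but this bounds the transformed defect only by the \emph{total variation} $\sum_g|(h.P_i)(g)-P_i(g)|\cdot|f(g)|$, which \autoref{eq:P_i is approx equiv} does not control, since that condition asserts only ultraweak convergence of the \emph{signed} sum. I would therefore first apply a Day--Mazur convexity argument: the conditions in (3) are convex, so suitable convex combinations of the $P_i$ can be arranged to make the approximate invariance hold in the $L^1$--type (total variation) norm rather than merely ultraweakly. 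Taking pointwise absolute values then yields a positive net retaining approximate invariance, and a final normalization dividing by $\max(1_X,\sum_g|P_i(g)|)$ enforces $\sum_g P_i(g)\le 1_X$ while preserving $\sum_g P_i(g)\to 1_X$, giving (4).

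Finally, for $(4)\Rightarrow(5)$ I would take a point--weak$^*$ cluster point $P$ of the contractive positive maps $\Phi_{P_i}(\xi)=\sum_g P_i(g)\cdot\xi(g,\cdot)$, whose existence is supplied by Banach--Alaoglu since $\|\Phi_{P_i}\|=\|\sum_g P_i(g)\|_{\infty}\le 1$. The limit $P$ is positive and $L^\infty(X)$--linear because each $\Phi_{P_i}$ is, unital because $\sum_g P_i(g)\to1_X$, and $\Gamma$--equivariant by \autoref{eq:P_i is approx equiv}; being unital, positive, and $L^\infty(X)$--linear it restricts to the identity on $1_\Gamma\otimes L^\infty(X)$ and is hence a conditional expectation by Tomiyama's theorem, which is (5). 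This closes the cycle, and in particular the chain $(1)\Rightarrow(5)$ converts the earlier bounded $P$ into the contractive $\Gamma$--equivariant map required to verify measurewise amenability of $\Gamma\act X$.
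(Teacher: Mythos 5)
Your proposal follows the same overall strategy as the paper's proof: approximate the module map by finitely supported maps in the point--ultraweak topology (both arguments lean on the density theorem of \cite{AD87}), positivize by taking absolute values and renormalizing, and pass to a cluster point to obtain the conditional expectation, with Tomiyama closing the loop. The differences are mostly organizational: you run $(5)\Rightarrow(2)\Rightarrow(1)\Rightarrow(3)$ where the paper runs $(5)\Rightarrow(1)\Rightarrow(2)\Rightarrow(3)$; note the paper inserts the symmetrization $P'(\xi)=\tfrac12\left(P(\xi)+P(\xi^*)^*\right)$ precisely because the density statement it cites is formulated for $*$--preserving maps, so your direct jump $(1)\Rightarrow(3)$ silently uses that density result in slightly greater generality than stated. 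There is, however, one substantive point in your favor: in $(3)\Rightarrow(4)$ you correctly observe that ultraweak approximate equivariance of the signed $P_i$ does not survive taking absolute values, since $a\mapsto|a|$ is not weakly continuous. The paper's assertion that ``$|P_i|$ is approximately $\Gamma$--equivariant by triangle--inequality'' is not valid as literally written (the triangle inequality only bounds the transformed defect by the total variation of the original defect, which \autoref{eq:P_i is approx equiv} does not control); the missing ingredient is exactly the Day--Mazur step you insert --- the defects $h.P_i-P_i$, viewed in $L^1(\Gamma\times X,\bar{\mu})$, converge weakly to $0$, so convex combinations converge in norm, and only then does $\left\||a|-|b|\right\|_1\le\|a-b\|_1$ apply. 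The paper delegates this to the calculation of \cite{Dougl10}; your write--up makes it explicit.

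That said, your final normalization contains a genuine error. Dividing by $\max\left(1_X,\sum_g|P_i(g)|\right)$ yields $\sum_g\tilde{P}_i(g)=\min\left(\eta_i,1_X\right)$ with $\eta_i:=\sum_g|P_i(g)|$, and this need not converge ultraweakly to $1_X$, because $\sum_g P_i(g)\to 1_X$ ultraweakly gives no pointwise lower bound on $\eta_i$. Concretely, for the trivial action of an amenable $\Gamma$ on $(X,\mu)$, take $P_i(g):=F_i(g)\,r_i$ with $F_i$ finitely supported F\o lner probability measures and $r_i\in L^\infty(X)$ taking values in $\{0,2\}$ on sets of measure $\tfrac12$ with $r_i\to 1_X$ ultraweakly: then all conditions of (3) hold (with norm--small defects, and $P_i$ already positive), yet $\min(\eta_i,1_X)=\min(r_i,1_X)\to\tfrac12 1_X$, so \autoref{eq:P_i is sum1} fails for your normalized net. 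The robust normalization is the paper's: first perturb to $P_i+\e\delta_e$ so that $\eta_i\ge\e$ and $1/\eta_i$ is a legitimate bounded multiplier, then divide by $\eta_i$ itself, which makes $\sum_g\tilde{P}_i(g)=1_X$ exactly; approximate equivariance of $\tilde{P}_i$ then follows from your norm approximate equivariance of $|P_i|$ together with the resulting approximate invariance of $\eta_i$. With that repair your argument goes through, and your $(4)\Rightarrow(5)$ (Banach--Alaoglu cluster point plus Tomiyama) is exactly what the paper delegates to Theorem 3.3 of \cite{AD87}.
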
 
     This lemma shows $(X,\mu)\rtimes \Gamma$ is amenable as a measured groupoid. 
     Thus, $\Gamma\act X$ is measurewise amenable and hence topologically amenable. 
     \end{itemize}

\begin{proof}[Proof of \autoref{lem:approximate by finitely supported}] 
     For simplicity we write $M$ for $L^\infty(X)$.  
     First, note that a finitely supported map $P_i:\Gamma \to M$ can be regarded as an element of 
     $B_M(L^\infty(\Gamma\times X),M)$, $M$--linear maps by: 
     \[
           P_i(\xi) := \sum_g P_i(g) \cdot \xi_g \text{ for }\xi \in L^\infty(\Gamma\times X)
     \] 

     \begin{itemize} 
     \item (5)$\Rightarrow$(1) is obvious. 

     \item (1)$\Rightarrow$(2) 

     Given $P$ of (1), define $P'$ by 
     \[
          P'(\xi) := \frac{P(\xi) + P(\xi ^*)^*}{2} \; .
     \]
      Then $P'$ is $*$--preserving, unital, bounded, $L^\infty(X)$--linear, and $\Gamma$--equivariant.  

     \item (2)$\Rightarrow$(3) 

          We equip $B_M(L^\infty(\Gamma\times X),M)$ with the point--ultraweak--topology. 

          Fix $R>0$ and define two subsets of $B_M(L^\infty(\Gamma\times X),M)$ as follows: 
          \[
               \mathcal{P} := \{ P \in B_M(L^\infty(\Gamma\times X),M) 
                \mid P\text{ is unital}, *\text{--preserving}, \mnorm{P}\le R\}
          \]
          \[
               \mathcal{L} := \{ P: \Gamma \to M 
               \mid P \text{ is finitely--supported}, *\text{--preserving}, \mnorm{P(g)}_M \le R \text{ for all }g\in \Gamma \}
          \]

          Then $\mathcal{L} \subset \mathcal{P}$ 
          and $\mathcal{P}$ is closed with point--ultraweak--topology. 
          Moreover, by the same argument as in the proof of Theorem 3.1 \cite{AD87}, 
          $\mathcal{L}$ is dense in $\mathcal{P}$. 

          Since we assume $P \in \mathcal{P}$ with $R:=\norm{P}$, we have $(P_i)_i \subset \mathcal{L}$ converging to $P$. 
          Then \autoref{eq:P_i is sum1} is valid since $P$ is unital, 
          and \autoref{eq:P_i is approx equiv} is valid since $P$ is $\Gamma$--equivariant. 

     \item (3)$\Rightarrow$(4) 

          We may assume $\eta_i := \sum\limits_g |P_i(g)| > 0$ by replacing $P_i + \e \delta_e$ for small $\e > 0$ instead of $P_i$. 
          We put 
               \[
                    \tilde{P_i}(g) := \frac{|P_i(g)|}{\eta_i} \; .
               \]
          Then $\sum\limits_g\tilde{P_i}(g) \to 1_X$ as $\e \to 0$. 

          To show \autoref{eq:P_i is approx equiv}, 
          we first show that $\tilde{P_i}$ is approximately $\Gamma$--equivariant 
          using the same calculation as in the proof of Lemma 3.8 of \cite{Dougl10}. 

          First, $|P_i|$ is approximately $\Gamma$--equivariant by the triangle--inequality: 
          
          For any $\zeta \in L^1(X)$ and $\xi \in L^\infty(\Gamma\times X)$, 
          \begin{align*}
               &\mabs{\sum_g \mbraket{ (h.|P_i| - |P_i|)(g) \cdot \xi_g }{\zeta} } \\
               &\le \sum_g \mbraket{ \mabs{|h.P_i| - |P_i|}(g) \cdot |\xi_g|}{|\zeta|} \\
               &\le \sum_g \mbraket{ \mabs{h.P_i - P_i}(g) \cdot |\xi_g|}{|\zeta|} \xrightarrow{ i } 0 \; .
          \end{align*}

          Moreover, $\eta_i \in L^\infty(X)$ is approximately $\Gamma$--invariant. Indeed, 
          \begin{align*}
               \eta_i - \eta_i^h 
               &= \sum_g |P_i(g)| - |P_i(g)|^h \\
               &= \sum_g |P_i(g)| - |P_i(h\inv g)|^h \\
               &= (|P_i| - h.|P_i|)(1_{\Gamma\times X}) \xrightarrow{i, \quad ultraweak} 0 \; .
          \end{align*}

          Now we compute $\Gamma$--equivariance of $\tilde{P_i}$. 
          For $\xi \in L^\infty(\Gamma,X)$ and $\zeta \in L^1(X)^+$, 
          \begin{align}
               &\mbraket{(h.\tilde{P_i} - \tilde{P_i})(\xi) }{\phi} \\
               &= \mbraket{
                    \sum_g \xi_g \cdot \p{ \frac{|P_i(h\inv g)|}{\eta_i^h} - \frac{|P_i(g)|}{\eta_i} } }{
                    \zeta} \notag\\ 
               &= \mbraket{
                    \sum_g \xi_g \cdot \p{|P_i(h\inv g)| - |P_i(g)|} }{
                    \frac{\zeta}{\eta_i^h}} 
               + \mbraket{ 
                    \sum_g \xi_g \cdot |P_i(g)| \cdot \p{\frac{1}{\eta_i^h} - \frac{1}{\eta_i} } }{
                    \zeta} \; .
          \end{align}

          For the left side, we have 
          \[
               \mnorm{\frac{1}{\eta_i}} 
               = \mnorm{\frac{1}{\sum_g |P_i(g)|}} 
               \le \mnorm{\frac{1}{|\sum_g P_i(g)|}} 
               \approx 1 \; .
          \] Therefore, 

          \begin{align*}
               &\mbraket{  
                    \sum_g \xi_g \cdot (|P_i(h\inv g)| - |P_i(g)|) }{
                    \frac{\zeta}{\eta_i^h}} \\
               &\le \mnorm{\zeta}_1 \mnorm{\frac{1}{\eta_i^h}}_\infty 
               \mbraket{
                    \sum_g \xi_g \cdot (|P_i(h\inv g)| - |P_i(g)|) }{
                    1_X} \\
               &\xrightarrow{ \quad i \quad}0 \; .
          \end{align*}

          For the right side, using $L^\infty(X,\mu) \subset L^1(X)$, 
          \begin{align*} 
               &\mbraket{ 
                    \sum_g \xi_g \cdot |P_i(g)| \cdot \p{\frac{1}{\eta_i^h} - \frac{1}{\eta_i}} }{
                    \zeta} \\
               &= \mbraket{
                    \eta_i^h - \eta_i }{
                    \frac{1}{\eta_i \eta_i^h}\cdot \p{\sum_g \xi_g P_i(g)} \cdot \zeta} \\
               &\xto{i} 0 
          \end{align*}
          from approximately invariance of $\eta_i$. 

     \item (4)$\Rightarrow$(5) 
          This follows by the same statement as in the proof of Theorem 3.3 (d)$\Rightarrow$(e) of \cite{AD87}. 
     \end{itemize}
\end{proof}


 We prove the following lemma at last. 

\begin{lemma}\label{lem:tensor and geometricity}

     Let $CX \curvearrowright V$ be a $CX$--Banach module and let $W$ be a Banach space.  

     The projective tensor $V\projten W$ and injective tensor $V\injten W$ are also $CX$--Banach modules with the action given by  

     \[
          p.(v\otimes w) := (p.v) \otimes w 
     \]
     for all $v\in V$, $w\in W$, $p\in CX$. 
     Then we obtained the following: 

     \begin{enumerate}

     \item If $V$ is $\ell^1$--geometric, then the projective tensor product $V\projten W$ is also $\ell^1$--geometric. 

     \item If $V$ is $\ell^{\infty}$--geometric, then the injective tensor product $V\injten W$ is also $\ell^{\infty}$--geometric. 

     \end{enumerate}
     \end{lemma}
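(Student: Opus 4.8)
The plan is to verify both geometricity bounds directly from the defining inequalities of \autoref{def:ell^1}, handling the projective case through the infimum description of $\projten$ and the injective case through its supremum description together with \autoref{lem:Nowak dual}(2). First I would record that the action is well defined: since $CX \act V$ is contractive, for each $p \in CX$ the map $T_p : v \mapsto p.v$ is a bounded operator on $V$ with $\|T_p\| \le \|p\|_\infty$, so $T_p \otimes \mathrm{id}_W$ extends to a contraction on both $V\projten W$ and $V\injten W$ (the standard functoriality of tensor norms under operators). This contraction is exactly $u \mapsto p.u$, and it makes each tensor product a contractive $CX$-module.

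For part (1), I would fix $\{p_k\}_{1\le k\le n} \subset C(X,[0,1])$ and $u \in V\projten W$, and choose an arbitrary representation $u = \sum_i v_i \otimes w_i$. Since $p_k.u = \sum_i (p_k.v_i)\otimes w_i$, the definition of the projective norm gives $\|p_k.u\|_\pi \le \sum_i \|p_k.v_i\|_V \|w_i\|_W$. Summing over $k$, interchanging the two finite sums, and applying the $\ell^1$-geometricity of $V$ to each $v_i$ (so that $\sum_k \|p_k.v_i\|_V \le \|\sum_k p_k\|_\infty \|v_i\|_V$) yields $\sum_k \|p_k.u\|_\pi \le \|\sum_k p_k\|_\infty \sum_i \|v_i\|_V\|w_i\|_W$. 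Taking the infimum over all representations of $u$ replaces the right-hand sum by $\|u\|_\pi$, which is the desired $\ell^1$-geometric inequality.

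For part (2), the natural route is duality rather than a direct estimate. Recall that $\|u\|_\varepsilon = \sup\{ |\langle \phi\otimes\psi, u\rangle| : \phi \in B_{V^*},\, \psi \in B_{W^*}\}$. I would fix $\{p_k\} \subset C(X,[0,1])$, $\{u_k\} \subset V\injten W$, and unit functionals $\phi,\psi$. The key move is to transfer the $CX$-action to the dual side: by the dual action of \autoref{def:action module} one has $\phi(p_k.v) = (p_k.\phi)(v)$, hence $\langle \phi\otimes\psi, p_k.u_k\rangle = \langle (p_k.\phi)\otimes\psi, u_k\rangle$, and the injective-norm pairing bound gives $|\langle (p_k.\phi)\otimes\psi, u_k\rangle| \le \|p_k.\phi\|_{V^*}\|\psi\|\,\|u_k\|_\varepsilon$. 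Summing over $k$ and factoring out $\max_k \|u_k\|_\varepsilon$ leaves the quantity $\sum_k \|p_k.\phi\|_{V^*}$. Here \autoref{lem:Nowak dual}(2) is essential: since $V$ is $\ell^\infty$-geometric, $V^*$ is $\ell^1$-geometric, so $\sum_k \|p_k.\phi\|_{V^*} \le \|\sum_k p_k\|_\infty \|\phi\|_{V^*} \le \|\sum_k p_k\|_\infty$. Taking the supremum over $\phi,\psi$ in the unit balls then yields $\|\sum_k p_k.u_k\|_\varepsilon \le \|\sum_k p_k\|_\infty \max_k\|u_k\|_\varepsilon$.

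The computations are routine; the only conceptual step is recognizing that injective geometricity should be proved through the dual pairing, where the hypothesis on $V$ converts, via \autoref{lem:Nowak dual}(2), into exactly the $\ell^1$-geometricity of $V^*$ that the estimate needs. I expect no genuine obstacle beyond keeping track that the $CX$-action used to identify $\phi(p_k.v)$ with $(p_k.\phi)(v)$ is the dual action of \autoref{def:action module}.
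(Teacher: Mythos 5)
Your proof is correct, and it takes a genuinely different---and in fact more complete---route than the paper's. The paper never verifies the defining inequalities for an arbitrary finite family $\{p_k\}_{1\le k\le n}\subset C(X,[0,1])$: in both parts it declares it sufficient (without justification) to treat two functions $p,q$ with disjoint supports acting on adapted elements ($p.z_1=z_1$, $q.z_2=z_2$), so that in particular $\|p+q\|_\infty\le 1$. In that special case its part (1) is close in spirit to yours---choose an almost-optimal representation of $z_1+z_2$, split it by $p$ and $q$, and apply $\ell^1$--geometricity of $V$ termwise---but its part (2) argues differently from you: it dualizes only the $W$--variable, using the formula $\mnorm{\sum_k v_k\otimes w_k}_{\e}=\sup_{\|w^*\|=1}\mnorm{\sum_k w^*(w_k)v_k}_V$, and then applies $\ell^\infty$--geometricity of $V$ directly inside the $V$--norm, never invoking \autoref{lem:Nowak dual}. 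You instead verify the definition verbatim: in (1) through the infimum description of the projective norm for arbitrary $\{p_k\}$ and arbitrary $u$ (for $u$ in the completion one works with countable representations, or proves the inequality on the algebraic tensor product and extends by continuity---routine), and in (2) by pairing against $\phi\otimes\psi$, moving the action to the dual via $\phi(p.v)=(p.\phi)(v)$, and quoting \autoref{lem:Nowak dual}(2). What each approach buys: yours closes the paper's reduction gap---passing from disjointly supported pairs to overlapping families, where $\|\sum_k p_k\|_\infty$ may exceed $1$, is not obvious---and it treats arbitrary elements of $V\injten W$ directly, whereas the paper's computation in (2) is written only for finite-rank tensors and needs a density remark; the paper's (2), in exchange, is self-contained (no duality lemma needed) and its one-sided sup formula keeps the whole estimate inside $V$.
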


     \begin{proof} 
     \vspace{1mm}
     (1) Let $z_1,z_2 \in V\projten W$ and $p,q \in C(X,[0,1])$ have disjoint supports, with 
     $p.z_1 = z_1$, $q.z_2=z_2$. 

     Then it suffices to show 
     \[
          \mnorm{z_1 +z_2}_\pi \le \max\{ \mnorm{z_1}_\pi,\mnorm{z_2}_\pi \} \; .
     \]
     For $\e>0$, choose $\bigl\{v_i\bigr\}_{i=1}^n $ and $\bigl\{w_i\bigr\}_{i=1}^n$ with 
     \[
          \mnorm{z_1+z_2 - \sum_i v_i\otimes w_i }_\pi < \e \; .
     \]

     Since $z_1 = p.z_1 + pq.z_2 = p.(z_1+z_2)$, and similarly for $z_2$, we obtain the follofing: 
     \begin{align*}
     \mnorm{z_1}_\pi +\mnorm{z_2}_\pi 
     &= \mnorm{p.(z_1+z_2)}_\pi + \mnorm{q.(z_1+z_2)}_\pi \\
     &\overset{2\e}{\approx}\mnorm{\sum_i p.v_i \otimes w_i }_\pi + \mnorm{\sum_i q.v_i \otimes w_i }_\pi\\ 
     &\le \sum_i \mnorm{p.v_i}_V \mnorm{w_i}_W + \sum_i \mnorm{q.v_i}_V \mnorm{w_i}_W \\
     &= \sum_i (\mnorm{p.v_i}_V + \mnorm{q.v_i}_V )\mnorm{w_i}_W \\
     &\text{using that } V\text{ is }\ell^1\text{--geometric, we get} \\
     &= \sum_i \mnorm{(p+q).v_i }_V \mnorm{w_i}_W \\
     &\le \sum_i \mnorm{v_i}_V \mnorm{w_i}_W \\
     &\overset{\e}{\approx} \mnorm{z_1+z_2}_\pi 
     \end{align*} 

     \vspace{1mm}

     (2) Let 
     \[
     z_1 := \sum\limits_{1\le k \le n} v_k \otimes w_k  \in V \injten W, \quad  
     z_2 := \sum\limits_{1\le l \le m} v'_l \otimes w'_l  \in V \injten W,
     \] 

     and $p,q \in C(X,[0,1])$ with disjoint supports. 

     Then it suffices to show 
     \[
          \| p.z_1 + q.z_2 \|_{\e} \le \max\{\|z_1\|, \|z_2\| \} \; .
     \]

     Recall that we have
     \[
          \| \sum_{1\le k \le n} v_k \otimes w_k \|_{\e} 
          = \sup_{w^* \in W^*, \|w^*\| = 1} \mnorm{\sum_k w^*(w_k) v_k } _V \; .
     \]

     Therefore, we have 
     \begin{align*}
          \| p.z_1 + q.z_2 \|_{\e} 
          &= \sup_{w^* \in W^*, \|w^*\| = 1} 
          \mnorm{\sum_k w^*(w_k) p.v_k + \sum_l w^*(w'_l) q.v'_l } _V\\
          &= \sup_{w^* \in W^*, \|w^*\| = 1} 
          \mnorm{ p.\left( \sum_k w^*(w_k) v_k \right) + q.\left( \sum_l w^*(w'_l) v'_l\right)} _V \\
          &\text{use } V\text{ is }\ell^{\infty}\text{--geometric, then} \\
          &\le \sup_{w^* \in W^*, \|w^*\| = 1} 
          \max\left\{\mnorm{\sum_k w^*(w_k) v_k} _V ,\mnorm{\sum_l w^*(w'_l) v'_l} _V \right\} \\
          &= \max\{ \mnorm{\sum_{1\le k \le n} v_k \otimes w_k} _{\e} , \mnorm{\sum_{1\le l \le m} v'_l \otimes w'_l} _{\e} \}\\
          &= \max\{ \mnorm{z_1} _{\e} , \mnorm{z_2} _{\e} \} \; .
     \end{align*}

     \end{proof}


\section{Character--Amenability} 

     There is a weaker notion of amenability of Banach algebras, called left/right--$\omega$--amenability, 
     where $\omega$ is a character on the Banach algebra (see Section 4.3 of \cite{Runde20}). 

     \begin{definition} 
     Let $A$ be a Banach algebra and $\omega:A\to \C$ be a character (Banach algebra homomorphism) on $A$. 
     Here we permit $0$ as a character. 

     We call $A$ \textit{left (right) $\omega$--amenable} if it satisfies the following: 

     Consider any $A$--$A$--bimodule $E$ whose left (resp. right) action is by 
     \[
     a.v := \omega(a)v \quad (\text{resp.} v.a := \omega(a)v ) 
     \] for $a\in A$, $v\in E$. 
     Then for any bounded derivation $D:A\to E^*$ is inner. 
     \end{definition} 

     In particular for $\omega = 0$, we obtain the following immidiately: 

     \begin{theorem} \label{thm:0--amenable}
     For a Banach algebra $A$, the following are equivalent: 
     \begin{enumerate}

     \item $A$ is right $0$--amenable. 

     \item $A$ has a bounded left approximate identity $(a_i)_i$. 

     \end{enumerate}
     \end{theorem}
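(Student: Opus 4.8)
\renewcommand{\qedsymbol}{}
The plan is to prove the two implications separately: the direction (2)$\Rightarrow$(1) is a short weak*--compactness argument, while (1)$\Rightarrow$(2) is the substantive half, obtained by feeding a canonical derivation into the hypothesis and then upgrading a weak approximate identity to a norm one. First observe how $\omega = 0$ simplifies everything. For right $0$--amenability the admissible bimodules $E$ carry the trivial right action $v.a = 0$; hence on $E^*$ the left action is trivial, the Leibniz rule for a bounded derivation $D\colon A \to E^*$ collapses to $D(ab) = D(a).b$, and innerness of $D$ means $D = ad_v$ for some $v\in E^*$, i.e. $D(a) = a.v - v.a = -\,v.a$ since $a.v = 0$.

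For (2)$\Rightarrow$(1), let $(e_i)_i$ be a left bounded approximate identity, so $\sup_i\|e_i\| < \infty$ and $\|e_i a - a\| \to 0$ for all $a$, and let $D\colon A \to E^*$ be any bounded derivation into such a module. The net $(D(e_i))_i$ is norm--bounded, so Banach--Alaoglu provides a weak*--cluster point $\psi \in E^*$ along a subnet. Along that subnet $D(e_i a) = D(e_i).a \to \psi.a$ weak* (for fixed $a$ the map $v \mapsto a.v$ is fixed, so $\langle D(e_i).a, v\rangle = \langle D(e_i), a.v\rangle \to \langle \psi.a, v\rangle$), while $D(e_i a) \to D(a)$ in norm because $e_i a \to a$. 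Comparing the two limits gives $D(a) = \psi.a = ad_{-\psi}(a)$, so $D$ is inner.

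The real work is (1)$\Rightarrow$(2). I would take the concrete bimodule $E := A^*$ with the trivial right action $f.a := 0$ and the left action $(a.f)(x) := f(xa)$; this is a contractive $A$--$A$--bimodule with trivial right action, hence admissible, and its dual is $E^* = A^{**}$ with trivial left action and right action $(\Phi.a)(f) = \Phi(a.f)$. A direct check shows the canonical embedding $\iota\colon A \to A^{**}$ satisfies $\iota(ab) = \iota(a).b$, so it is a derivation into $E^*$. Right $0$--amenability makes $\iota$ inner, producing $m \in A^{**}$ with $\iota(a) = -\,v.a = R_a^{**}m$ for all $a$, where $R_a\colon A \to A$, $R_a x = xa$, is right multiplication and $R_a^{**}$ its (weak*--continuous) second adjoint. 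By Goldstine's theorem choose a net $(b_\lambda)_\lambda$ in the closed ball of radius $\|m\|$ with $\iota(b_\lambda) \to m$ weak*; weak*--continuity of $R_a^{**}$ then gives $\iota(b_\lambda a) = R_a^{**}\iota(b_\lambda) \to R_a^{**}m = \iota(a)$, that is $b_\lambda a \to a$ weakly for every $a$. Thus $(b_\lambda)_\lambda$ is a norm--bounded \emph{weak} left approximate identity, with bound $\|m\|$.

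The final and most delicate step is to convert this weak approximate identity into a genuine norm one. Fixing a finite set $S \subset A$ and $\varepsilon > 0$, the set $\{(ba - a)_{a\in S} \mid \|b\| \le \|m\|\}$ is a convex subset of the finite product $\prod_{a\in S} A$ whose weak closure contains $0$ (each coordinate of $(b_\lambda a - a)_{a \in S}$ tends weakly to $0$). Since Mazur's theorem identifies the weak and norm closures of a convex set, some $b$ with $\|b\| \le \|m\|$ satisfies $\|ba - a\| < \varepsilon$ simultaneously for all $a \in S$; indexing over the pairs $(S,\varepsilon)$ produces the desired bounded left approximate identity. I expect this convexity upgrade, together with the bookkeeping that keeps the bound uniformly equal to $\|m\|$, to be the main obstacle, the remainder being formal once the module $E = A^*$ and the derivation $\iota$ are in place.
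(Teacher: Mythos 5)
Your proposal is correct, but there is essentially no proof in the paper to compare it against: Theorem 7.1 is introduced with the phrase ``we can say followings immidiately'' and no argument is supplied, the statement being treated as a known fact of character-amenability theory (the $\omega=0$ companion of the result, quoted from Runde as Lemma 3.2(2), that amenable Banach algebras have bounded approximate identities). Your argument is the standard one behind that assertion, and it is sound under the paper's conventions. Since the admissible bimodules have right action $v.a=\omega(a)v=0$, the dual convention $a.\phi.b(v)=\phi(b.v.a)$ makes the left action on $E^*$ trivial, so derivations obey $D(ab)=D(a).b$ and innerness reads $D(a)=-v.a$; a weak*-cluster point of the bounded net $(D(e_i))_i$ then settles (2)$\Rightarrow$(1) exactly as you describe. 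For (1)$\Rightarrow$(2), the module $E=A^*$ with $(a.f)(x)=f(xa)$ and zero right action is admissible, the canonical embedding $\iota\colon A\to A^{**}=E^*$ is a bounded derivation because $(\iota(a).b)(f)=(b.f)(a)=f(ab)=\iota(ab)(f)$, and innerness together with Goldstine and the weak*-continuity of $R_a^{**}$ produces a norm-bounded weak left approximate identity; the Mazur convexity argument upgrading weak to norm approximation, uniformly over finite sets and with the bound $\|m\|$ preserved, is the classical closing step and you carry it out correctly. The only blemish is notational: you write $\iota(a)=-v.a=R_a^{**}m$ without stating explicitly that $m:=-v$. What your write-up adds beyond the paper is a self-contained verification and an explicit bound, namely the norm of the implementing element, on the approximate identity obtained.
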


     From the proof of \autoref{thm:Johnson}, one knowns that 
     the following are equivalent for a topological group $G$; 
     \begin{enumerate}

     \item The group $G$ is amenable.

     \item The Banach algebra $L^1(G)$ is amenable. 

     \item The Banach algebra $L^1(G)$ is left (right)--$\omega$--amenable for some character $\omega$. 

     \item The Banach algebra $L^1(G)$ is left (right)--$\omega$--amenable for any character $\omega$. 

     \item The Banach algebra $L^1(G)$ is left (right)--$1_G$--amenable. 

     \end{enumerate}

     For $\Gamma \act X$, we have the following partial analogue: 

     \begin{theorem} \label{thm:0--amenable of N0} 

     For $\Gamma \act X$, the following are equivalent: 
     \begin{enumerate}

     \item The action $\Gamma \act X$ is amenable. 

     \item The Banach algebra $W_0(\Gamma,X)$ is left $\pi$--amenable. 

     \item The Banach algebra $\ker\bar{\pi}$ has right approximate identity. 

     \item The Banach algebra $\ker\bar{\pi}$ is left $0$--amenable. 

     \end{enumerate}

     (We note that amenability of $W_0(\Gamma,X)$ is not equivalent to these.)
     \end{theorem}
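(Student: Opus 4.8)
The plan is to split the four-way equivalence into the purely Banach-algebraic part (2)$\Leftrightarrow$(4)$\Leftrightarrow$(3) and the action-theoretic core (1)$\Leftrightarrow$(3). The equivalence (3)$\Leftrightarrow$(4) is immediate from the left--right mirror of \autoref{thm:0--amenable}: a Banach algebra is left $0$--amenable exactly when it possesses a bounded right approximate identity; applying this to $A=\ker\bar\pi$ identifies (4) with (3). For (2)$\Leftrightarrow$(4) I would run the standard unitization/restriction argument, using that $\delta_e$ is the unit of $W_0$, that $\ker\pi=\ker\bar\pi$ (since $\bar\pi(f)=0$ forces $f\in W_0$), and that $W_0=\ker\bar\pi\oplus\C\delta_e$ with $\pi$ restricting to the zero character on $\ker\bar\pi$. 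For (4)$\Rightarrow$(2): given a $W_0$--bimodule $E$ with left action $w.v=\pi(w)v$ and a derivation $D:W_0\to E^*$, regard $E$ as a $\ker\bar\pi$--bimodule, where the left action is now $0$; then $E^*$ is precisely a dual module of the kind appearing in left $0$--amenability, so $D|_{\ker\bar\pi}$ is inner, say $D=\mathrm{ad}_\xi$ on $\ker\bar\pi$, and $D-\mathrm{ad}_\xi$ is a derivation vanishing on $\ker\bar\pi$ and on $\delta_e$ (as $D(1_{W_0})=0$), hence identically zero. For (2)$\Rightarrow$(4): adjoin $\delta_e$ as a unit acting identically on the right and by $\pi$ on the left, extend $\delta:\ker\bar\pi\to F^*$ to $D(a+\lambda\delta_e):=\delta(a)$, verify it is a derivation (here the hypothesis that $F$ has zero left action forces $\delta(a_1a_2)=\delta(a_1).a_2$, which is exactly what the Leibniz check needs), and pull innerness back.

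The heart is (1)$\Leftrightarrow$(3), and everything rests on one reindexing identity. Recall that each $m\in A_0$ restricts to $m(\cdot,x)\in\ell^1(\Gamma)$ with $\|m(\cdot,x)\|_1\le\|m\|_{A_0}$, and that $\delta_g\ast m=g.m$. Tracking the simultaneous $\Gamma$--action on the index and on $X$, one obtains for any $m\in W_0$ with $\pi(m)=1$
\[
 \mnorm{\delta_g\ast m-m}_{A_0}=\sup_{x\in X}\sum_{h\in\Gamma}\bigl|\,m(gh)(g.x)-m(h)(x)\,\bigr|=\sup_{x\in X}\bigl\|\,m(g.x)-g.(m(x))\,\bigr\|_1,
\]
whose right-hand side is exactly the defining quantity of \autoref{def:amenable action}. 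For (1)$\Rightarrow$(3): take approximate invariant means $(m_i)\subset C(X,Prob(\Gamma))\subset W_0$ (positive, $\pi(m_i)=1$, $\|m_i\|_{A_0}=1$) and set $e_i:=\delta_e-m_i\in\ker\bar\pi$, so $\|e_i\|\le2$. For $a\in\ker\bar\pi$ we have $a\ast e_i-a=-a\ast m_i$, so it suffices to show $\|a\ast m_i\|\to0$; by uniform boundedness this need only be checked on the generators $a=p\ast\delta_g-p$ of $\ker\bar\pi$, where $a\ast m_i=p.(g.m_i-m_i)$ and the identity gives $\|g.m_i-m_i\|_{A_0}\to0$. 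Thus $(e_i)$ is a bounded right approximate identity.

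For (3)$\Rightarrow$(1): given a bounded right approximate identity $(e_i)$ of $\ker\bar\pi$, set $m_i:=\delta_e-e_i\in W_0$, so $\pi(m_i)=1$, the net is bounded, and $a\ast m_i=a-a\ast e_i\to0$ for all $a\in\ker\bar\pi$; taking $a=\delta_g-\delta_e$ yields $\|\delta_g\ast m_i-m_i\|_{A_0}\to0$, i.e. asymptotic $\Gamma$--invariance through the identity above. A weak-$*$ cluster point $\mu\in W_0^{**}$ of $(m_i)$ then satisfies $\mu(\pi)=\lim\pi(m_i)=1$, and $g.\mu=\mu$ because the $\Gamma$--action on the bidual is the bitranspose of the isometry $\delta_g\ast-$, which preserves $W_0$ and fixes $\pi$. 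Hence $\mu$ is an invariant mean and \autoref{thm:Nowak1} yields amenability of $\Gamma\act X$.

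The main obstacle is the bookkeeping in the reindexing identity: one must carefully track the two interlocking $\Gamma$--actions (on the group index and on $X$) so that $\|\delta_g\ast m-m\|_{A_0}$ aligns precisely with the supremum in \autoref{def:amenable action}, and one must confirm that the right approximate identity property propagates from the generating set $\{p\ast\delta_g-p\}$ to all of $\ker\bar\pi$, which is exactly where boundedness of the net is used. The parenthetical remark that full amenability of $W_0$ is strictly stronger is already supplied by \autoref{thm:Ozawa}.
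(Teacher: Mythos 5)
Your proposal is correct, but the heart of your argument runs along a genuinely different axis than the paper's. The paper's chain is (3)$\Leftrightarrow$(4) via \autoref{thm:0--amenable}, (2)$\Leftrightarrow$(4) by the same restriction/extension trick you use, and then it couples (1) to (2) on the \emph{derivation} side: for (2)$\Rightarrow$(1) it builds the concrete derivation $D(f) := f.\tau_0 - \tau_0.f$ from $W_0$ into $\ker\bar{\pi}^{**} \cong \{\tau \in W_0^{**} \mid \tau(\pi) = 0\}$ (left action via $\pi$, right action by convolution on the predual $\ker\bar{\pi}^{*}$) and reads the invariant mean off $\tau_0 - \tau_1$ once this derivation is inner, while for (1)$\Rightarrow$(2) it takes a weak-$*$ accumulation point of $\{D(f_i)\}_i$ along an approximate invariant mean $(f_i)_i \subset W_0$ and verifies $D = ad_\tau$ on the generators $p(\delta_g - \delta_e)$ of $\ker\bar{\pi}$. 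You instead couple (1) to (3) directly and derivation-free, through the translation $e_i \leftrightarrow \delta_e - m_i$ between bounded right approximate identities of $\ker\bar{\pi}$ and approximate invariant means; your reindexing identity $\|\delta_g \ast m - m\|_{A_0} = \sup_{x}\|m(g.x) - g.(m(x))\|_1$ is correct and is the hinge of both directions. This is in effect an upgrade of the paper's own \autoref{thm:Ozawa} computation: there $\delta_e - e_i$ is evaluated at a single point of $X$ to extract only amenability of $\Gamma$, whereas you keep the $CX$-valued object, pass to a weak-$*$ cluster point in $W_0^{**}$ (legitimately, since the bitranspose of the isometry $\delta_g \ast {-}$ is weak-$*$ continuous), and invoke \autoref{thm:Nowak1}. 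What the paper's route buys is an explicit recipe for the implementing element of an \emph{arbitrary} derivation, which is the template reused elsewhere in the paper; what yours buys is a shorter, more elementary core, with the Banach-algebra cohomology confined to the formal equivalences (2)$\Leftrightarrow$(4)$\Leftrightarrow$(3).

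Two minor cautions, neither a genuine gap. First, in your parenthetical for (2)$\Rightarrow$(4): under the paper's dual-module convention $a.\phi.b\,(v) := \phi(b.v.a)$, a zero \emph{left} action on $F$ annihilates the \emph{right} action on $F^{*}$, so the degenerate Leibniz identity you actually have (and the one your extension check needs) is $\delta(a_1 a_2) = a_1.\delta(a_2)$, not $\delta(a_1).a_2$; the sides in your remark are mirrored, though the argument is unaffected. Second, like the paper, you use $D(1_{W_0}) = 0$ and $ad_\xi(\delta_e) = 0$, which tacitly assumes the dual bimodule is unital; this is the paper's own standing convention (the remark in \autoref{def:amenable Banachalg}), so it is a shared, standard-to-repair imprecision rather than a defect of your proposal.
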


     \begin{proof}

     (3)$\Leftrightarrow$(4) follows from \autoref{thm:0--amenable}. 

     \vspace{2mm}
     (4)$\Rightarrow$(2) is proved as follows: 

     Let $E$ be a $W_0$--$W_0$--module whose right action is given bia $\pi$, 
     and let $D:W_0 \to E^*$ be a bounded derivation. 

     Then restriction endows $E$ with a $\ker\bar{\pi}$--$\ker\bar{\pi}$--bimodule structure 
     whose right action is zero, 
     and $D|_{\ker\bar{\pi}}$ is a derivation. 

     Therefore (4) yields $\tau \in E^*$ with $D(f) = f.\tau - \tau.f$ for $f \in \ker\bar{\pi}$. 
     Here $D(\delta_e)$ is $0$ since $\delta_e$ is the unit of $W_0$. 
     Thus $D(f) = f.\tau - \tau.f$ is valid for $f\in W_0$ and $D$ is inner.  

     \vspace{2mm}
     (2)$\Rightarrow$(4) is proved conversely: 

     Let $E$ be a $\ker\bar{\pi}$--$\ker\bar{\pi}$--bimodule whose left action is zero, 
     and $D:\ker\bar{\pi} \to E^*$ is a bounded derivation. 

     Then extend the bimodule structure to $W_0$ by defining 
     left action via $\pi$ and the right action by 
     \[
          v.f := v.(f - \pi(f)\delta_e) + \pi(f)v 
     \]
     noting that $f - \pi(f)\delta_e \in \ker\bar{\pi}$. 

     Moreover, by letting $D(\delta_e) := 0$, 
     one extends $D$ to $W_0$ 
     so that $D:W_0\to E^*$ is a derivation with respect to the above actions. 

     Thus $D$ is inner on $W_0$, and hense also on $\ker\bar{\pi}$. 

     \vspace{2mm}
     (2)$\Rightarrow$(1) is shown by constructing an explicit derivation on $W_0$ as follows: 

     \begin{itemize}

     \item Take $E := \ker\bar{\pi}^*$. Then  
     \[
          \ker\bar{\pi}^{**} \cong \{ \tau \in W_0 ^{**} \mid \tau(\pi) =0 \} \; .
     \]

     \item The right action $\ker\bar{\pi}^* \tca W_0$ is given by 
     \[
          \langle \Phi.  f_1  , f_2 \rangle := \langle \Phi , f_1 \ast f_2 \rangle  
     \]
     for $\Phi \in \ker\bar{\pi}^*$, $f_1 \in W_0$, $f_2 \in \ker\bar{\pi}$. 

     \item The left action $W_0 \act \ker\bar{\pi}^* $ is defined by 
     \[
     f .\Phi := \pi(f) \cdot \Phi \; .
     \] 

     \item Similarly, $W_0^*$ is a $W_0$--$W_0$--bimodule with the same structure.

     \item Take $\tau_0 \in W_0 ^{**}$ such that $\tau(\pi) =1$. 
     Then for each $f$, the element $f.\tau_0 -\tau_0 .f \in W_0^{**} $ actually lies in $\ker\bar{\pi}^{**}$. 
     Indeed, 

     \begin{align*}
          (f.\tau_0 -\tau_0 .f) (\pi) 
          &= \tau_0(\pi .f - f.\pi) \\
          &= \tau_0 [W_0^* \ni f' \mapsto \pi(f\ast f') - \pi(f) \pi(f') ] \\
          &= \tau_0 (0) = 0 \; .
     \end{align*}

     \item Define $D:W_0 \longrightarrow \ker\bar{\pi}^{**}$ by
     \[
          D(f) := f.\tau_0 - \tau_0 .f 
     \]
     and this is a bounded derivation on $W_0$. 

     \item Hense by (2) there exists $\tau_1 \in \ker\bar{\pi}^{**}$ 
     such that 
     \[
     f.\tau_0 - \tau_0 .f = f.\tau_1 - \tau_1 .f \; .
     \]. 

     \item This means  
     \begin{align*}
       f.(\tau_0 - \tau_1) 
       &= (\tau_0 - \tau_1 ).f \\
       &= \pi(f) (\tau_0 - \tau_1 )   \; .
     \end{align*}

     In particular $\tau_0 - \tau_1 \in W_0^{**}$ is $\Gamma$--invariant 
     and $(\tau_0 - \tau_1)(\pi) = 1$. 
     Thus it is an invariant mean for $\Gamma \act X$. 
     \end{itemize}

     (1)$\Rightarrow$(2) is shown as follows: 

     Let $E$ be a $W_0$--$W_0$--module with right action via $\pi$ 
     and $D:W_0 \to E^*$ be a bounded derivation. 

     By (1), there exists an approximate $\Gamma$--invariant mean $(f_i)_i \subset W_0$. 
     Since it is bounded, choose $\tau \in E^*$ as a weak--* accumulation point of $\{D(f_i)\}_i$. 
     Then one can show $D = \Ad_{\tau}$. 

     First, $(f_i)_i$ satisfies 
     \[
       \| p(\delta_g - \delta_e) \ast f_i \|_{A_0} \xrightarrow{i} 0   
     \]
     for any $g\in\Gamma$, $p\in CX$, since 

     \begin{align*}
          \| p(\delta_g - \delta_e) \ast f_i \|
          &\le \|p\|_{\infty} \cdot \| (\delta_g - \delta_e) \ast f_i \| \\
          &= \|p\|_{\infty} \cdot \| g.f_i - f_i \| \\
          &\xrightarrow{i} 0 \; .
     \end{align*}

     Hence  
     \begin{align*}
          D(p(\delta_g - \delta_e) \ast f_i) 
          &= D(p(\delta_g - \delta_e)).f_i +  p(\delta_g - \delta_e).D(f_i) \\
          &= D(p(\delta_g - \delta_e)) +  p(\delta_g - \delta_e).D(f_i) \\
          &\xrightarrow{i} \; 0 \; .
     \end{align*} 

     Passing to the limit at $\tau$, 

     \begin{align*}
          D(p(\delta_g - \delta_e)) 
          &= p(\delta_g - \delta_e).\tau \\
          &= p(\delta_g - \delta_e).\tau - \tau.p(\delta_g - \delta_e)
     \end{align*}
     for all $p\in CX$ and $g\in\Gamma$. 

     Since $\ker\bar{\pi}$ is generated by 
     $\{p(\delta_g - \delta_e) \mid p\in CX, \; g\in\Gamma\}$, 
     it follows that $D = \Ad_\tau$ on $\ker\bar{\pi}$. 

     Then $D = \Ad_{\tau}$ on all of $W_0$, since $D(\delta_e) = 0$. 

     \end{proof}
\section{Appendix: Topological/Measured Groupoids}\label{sec:appendix_topological_measured_groupoid}

     We denote a (discrete) groupoid by $\geh = (\geh,\geh_0,s,t,m)$, where 
     \begin{itemize}
     \item $\geh_0$ is the object space of $\geh$, 
     \item $s:\geh \to \geh_0$ is the source map,  
     \item $t:\geh \to \geh_0$ is the target map, 
     \item $m:\{(g,h)\in \geh \times \geh \mid s(g) = t(h) \} \to \geh$ is the multiplication map. 
     \end{itemize}

     We denote an arrow $g\colon x\to y$ in $\geh$ to mean $g\in\geh$ with $s(g)=x$ and $t(g)=y$.

     \subsection{Topological Groupoids}

     \begin{definition} \label{def:topological grpd} (Definition 2.2.8. of \cite{AD00})
     \begin{enumerate}

     \item We call $\geh$ a \textit{locally compact topological groupoid} ($lc$ groupoid for short) if 
     $\geh$ is equipped with a locally compact Hausdorff topology that makes $s,t,m$ continuous.

     \item Let $\geh$ be a $lc$ groupoid. 
     For each $x\in \geh_0$, let $\lambda_x$ be a Borel measure on $t\inv(x)$. 
     We say that the family $\lambda = \{\lambda_x\}_{x\in \geh_0}$ is a \textit{Haar system} when 
          \begin{itemize}
               \item (continuity) for each $f\in C_c(\geh)$, 
               \[
                    \geh_0 \ni x \mapsto \int_{t\inv(x)} f \dd{\lambda_x} 
               \]
               is continuous.

               \item (invariance) for each $f \in C_c(\geh)$ and arrow $(g:x\to y) \in \geh$, 
               \[
                    \int_{t\inv(x)} f(gh) \dd{\lambda_x (h)} = \int_{t\inv(y)} f(h) \dd{\lambda_y (h)} \; .
               \]
          \end{itemize}
     \end{enumerate}
     \end{definition}

     Haar systems need not exist and are not unique in general. 

     \begin{definition} \label{def:topo amen grpd} 
     Let $(\geh,\lambda)$ be $lc$ groupoid with a fixed Haar system. 

     We say $(\geh,\lambda)$ is \textit{topologically amenable} 
     if there exists a net $( f_i \in C_c^+(\geh))_i $ with normalization condition: 
     \[
          f_i |_{t\inv(x)} \text{ belongs to } \mathrm{Prob}(t\inv(x), \lambda_x) \text{ for each }x\in \geh_0 
     \]
     and approximate inavariance: 
     \[
          \sup_{x\in s(g)} \int_{t\inv(x)} |f_i(gh) - f_i(h)  | \dd{\lambda_x(h)} \xrightarrow{i} 0 \text{ for all }g \in\geh
     \]
     \end{definition}

     \vspace{4mm} 


\subsection{Measured groupoid} (Section 3.2 in \cite{Takesaki3}, Chapter 10. in \cite{Willi19})

     \begin{definition}\label{def:measured groupoid} \kaigyo
     \begin{enumerate}
     \item Let $\geh$ be a groupoid equipped with a standard Borel structure $\mathfrak{M}$. 
     That is, $\mathfrak M$ is the Borel $\sigma$--algebra of some Polish topology on $\geh$.  

     We call $\geh$ a \textit{measurable groupoid} if $s,t,m$ are measurable. 

     \item Let $(\geh,\mathfrak{M})$ be a measurable groupoid. 
     We say the pair $(\geh,\mathfrak{M}, \lambda =\{\lambda_x\}_{x\in \geh_0}, \mu)$ as a \textit{measured groupoid} 
     if it satisfies the following: 
          \begin{itemize}
               \item The $\mu$ is a probability measure on $(\geh_0, \mathfrak{M}|_{\geh_0})$.  

               \item The $\lambda_x$ is a positive measure on $(t\inv(x), \mathfrak{M}|_{t\inv(x)})$.

               \item (locally measurability of $\lambda$) for each $f \in C_c(\geh)$, 
               \[
                    x \ni \geh_0 \mapsto \int_{t\inv(x)} f \dd{\lambda_x} \text{ is measurable.}
               \]

               \item (invariance of $\lambda$) for each $f \in C_c(\geh)$ and $(g:x\to y) \in \geh$, 
               \[
                    \int_{t\inv(x)} f(gh) \dd{\lambda_x (h)} = \int_{t\inv(y)} f(h) \dd{\lambda_y (h)} \; .
               \] 
               \item (quasi--invariance of $\nu$) 
               Two measures on $\geh$; $\mu\circ\lambda$ and $\mu\circ\lambda\inv$ are equivariant 
               (i.e., have same null--sets) where 
               \[
                    \mu\circ\lambda (f) 
                    := \int_{x\in \geh_0} \int_{g\in t\inv(x)} f(g) \dd{\lambda_y(g)} \dd{\mu(x)}, 
               \]
               and 
               \[
                    \mu\circ\lambda\inv (f) 
                    := \int_{x\in \geh_0} \int_{g\in t\inv(x)} f(g\inv) \dd{\lambda_y(g)} \dd{\mu} \; .
               \] 
          \end{itemize}
     \end{enumerate}
     \end{definition}

     \begin{definition} \label{def:C_c(G)-action} 

     Let $(\geh,\lambda = \{\lambda_x\}_{x\in \geh_0},\mu)$ be a measured groupoid. 

     Then there exists an action $C_c(\geh) \act L^{\infty}(\geh, \mu\circ\lambda)$ defined by 

     \[
           f\ast\varphi (g) := \int_{h \in t\inv(tg)} f(h)\varphi(h\inv g) \dd{\lambda_{tg}(h)}
           \text{ for }f\in C_c(\geh),\; \varphi \in L^\infty(\geh,\mu\circ\lambda)  \; .
     \]
     (It is a contractive action when $C_c(\geh)$ is equipped with the $I$--norm $\|\|_I$. 
     See p.16. in \cite{Willi19}.) 

     Similarly, the action $C_c(\geh) \act L^\infty(\geh_0,\mu)$ is defined by 
     \[
          f\ast\phi (x) := \int_{h \in t\inv(tg)} f(h)\phi(s(h)) \dd{\lambda_x(h)}
           \text{ for }f\in C_c(\geh),\; \phi \in L^\infty(\geh_0,\mu)  \; .
     \] 
     In other words, 
     \[f\ast \phi = r (f\ast(\phi\circ s))\] 
     where $r:L^\infty(\geh)\to L^\infty(\geh_0)$ is the restriction map. 

     \end{definition} 

     \begin{definition}\label{def:amen of measured grpd and measurewise amen}   
     \kaigyo 

     \begin{enumerate} 
     \item 
          A measured groupoid $(\geh,\lambda = \{\lambda_x\}_{x\in \geh_0},\mu)$ is called \textit{(measured--)amenable} 
          if there exists a $\geh$--equivariant, unital conditional expectation 
          \[
               P: L^{\infty}(\geh, \mu\circ\lambda) \to L^{\infty}(\geh_0, \mu) \; .
          \] 

          That is, $P$ satisfies 
          \[
               P(f\ast \varphi) =  f \ast P(\varphi)
          \]
          for $f\in C_c(\geh)$, $\; \varphi \in L^\infty(\geh,\mu\circ\lambda)$ 
          and $P$ restricts to the identity on $L^\infty(\geh_0,\mu)$. 
     
     \item 
     Consider a \underline{second--countable} $lc$ groupoid $\geh$ 
     and a Haar system $\lambda = \{\lambda_x\}_{x\in \geh_0}$ on $\geh$. 
     We note that in this case $\geh$ is Polish space. 
     Then $\geh$ is called \textit{measurewise--amenable} 
     if for any quasi--invariant measure $\mu$ on $\geh_0$ (as in \autoref{def:measured groupoid}), 
     the measured groupoid ($\geh, \mathfrak{M}, \lambda, \mu $) is measured amenable. 
     \end{enumerate} 
     \end{definition}

     Then the following can be shown: 

     \begin{theorem} \label{thm:measurewise vs topo amen grpd} (Theorem 10.52., Theorem 10.22. of \cite{Willi19})

     Let $\geh$ be a second--countable $lc$ groupoid and 
     $\lambda = \{\lambda_x\}_{x\in \geh_0}$ be a Haar system on $\geh$. 
     Then topological amenability of $(\geh,\lambda)$ implies measurewise amenability. 

     Moreover, if the quotient space $\geh_0 / \geh$ is $T_0$, then the converse is holds.  
     This condition includes the following cases: 
     \begin{enumerate}
     \item $\geh$ is a \etale groupoid, 

     \item $\geh$ is a $lc$ groupoid with discrete orbits (Theorem 3.3.7. of \cite{Takesaki3}), 

     \item $\geh$ is a transitive groupoid. (Corollary 10.54. of \cite{Willi19}) 
     \end{enumerate}

     In case (3), metric amenability (coincidence of the full $C^*$--algebra and the reduced $C^*$--algebra) 
     is also equivalent. 

     \end{theorem}

     \begin{example}
     In the case of a discrete group action $\Gamma \act X$ on compact Hausdorff $X$, 
     the transformation groupoid $\Gamma \ltimes X$ is a \etale $lc$ groupoid 
     when equipped with the product of discrete topology on $\Gamma$ and the topology of $X$. 

     We adopt the following notation:  
     \begin{itemize}
          \item The element $(g,x) \in \Gamma \ltimes X$ denotes the arrow $g\inv .x\xrightarrow{g} x$.  
          \item Hence $s(g,x) = g\inv x$ and $t(g,x) = x$.  
          \item The product and inverse are given by 
          $(g,x)\cdot(h,g\inv .x) = (gh,x)$, $(g,x)\inv = (g\inv, g\inv .x)$. 
     \end{itemize}

     Each target fiber of $\Gamma \ltimes X$ is $\Gamma$, and 
     admits the Haar system $\lambda_c$ given by the counting measure on $\Gamma$. 

     Then, by definition and \autoref{thm:measurewise vs topo amen grpd}, the following are equivalent: 
     \begin{enumerate}
     \item The action $\Gamma \act X$ is amenable. 
     \item The topological groupoid $(\Gamma \ltimes X, \lambda_c)$ is topologically amenable. 
     \item The topological groupoid $(\Gamma \ltimes X, \lambda_c)$ is measurewise amenable. 
     \end{enumerate}

     \vspace{2mm}

     We now describe (3) more concretely.  

     For any Borel measure $\mu$ on $X$, the measure $\mu \circ \lambda_c$ of \autoref{def:measured groupoid} is 
     the product of the counting measure on $\Gamma$ and $\mu$ on $X$. 
     On the other hand, $\mu\circ\lambda_c\inv $ is given by 
     \[
       \mu \circ \lambda_c\inv (\{g\} \times A) = \mu\circ\lambda_c(\{(g\inv,g\inv .x)\mid g\inv .x \in A\}) = \mu(gA)   
     \]
     for $A\subset X$. 

     Therefore $\mu$ is quasi--invariant under $\lambda_c$ 
     if and only if $\mu$ and $g.\mu$ are equivalent for all $g\in \Gamma$. 

     \vspace{2mm} 
     Fix a quasi--invariant measure $\mu$ for $\lambda_c$. 
     The action $C_c(\Gamma\ltimes X)\act L^\infty(\Gamma\ltimes X, \lambda_c \circ \mu) $ 
     and $C_c(\Gamma\ltimes X)\act L^\infty(X, \mu) $ are given by: 

     For $\xi \in L^\infty(\Gamma\ltimes X)$, $\xi_X \in L^\infty(X)$, $f \in C_c(\Gamma\ltimes X)$, 
     \[
          (f\ast\xi)(g,x) 
          = \sum_h f(h,x)\xi((h,x)\inv \cdot (g,x)) 
          = \sum_h f(h,x)\xi(h\inv g,h\inv .x)
     \] and 
     \[
          (f\ast\xi_X)(x) = \sum_h f(h,x)\xi_X(h\inv .x) \; .
     \] 


     Thus, for a map $P:L^\infty(\Gamma\times X)\to L^\infty(X)$, 
     $C_c(\Gamma\rtimes X)$--equivariance is equivalent to: 

     \[
          P(g.\xi) = M(\xi)^g \quad  \text{where } g.\xi(h,x) := \xi(g\inv h,g\inv .x)
     \] 
      for any $\xi \in L^\infty(\Gamma\times X)$, and also 
     \[
          P(\xi_X.\xi) = \xi_X \cdot P(\xi) \quad \text{where } \xi_X.\xi (h,x) := \xi_X(x)\xi(h,x)
     \]for any $\xi_X \in L^\infty(X)$. 

     However, the latter condition follows from $P$ restricting to the identity on $L^\infty(X)$ and being contractive, 
     since $L^\infty(X\rtimes \Gamma)$ and $L^\infty(X)$ are $C^*$--algebras, 
     and one may apply Tomiyama's Theorem on conditional expectations (Theorem 1.5.10 of \cite{BO}). 

     Hence we obtain the following: 

     \begin{theorem}\label{thm:amen of G--measure space}

     Suppose $\Gamma \act (X,\mu)$ with $g.\mu \cong \mu$ for all $g\in\Gamma$. 

     Then the following are equivalent: 
     \begin{enumerate} 

     \item The measured groupoid $(\Gamma\ltimes X, \lambda_c,\mu)$ is amenable. 

     \item There exists a contractive linear map 
     \[
          P: L^\infty(\Gamma\times X)\to L^\infty(X)
     \] such that for any $\xi \in L^\infty(\Gamma\times X)$: 
     \[
          P(g.\xi) = M(\xi)^g \quad  \text{where } g.\xi(h,x) := \xi(g\inv h,g\inv .x)
     \] and for any $\xi_X \in L^\infty(X)$: 
     \[
          P(\xi_X) = \xi_X \; .
     \]

     \end{enumerate}
     \end{theorem}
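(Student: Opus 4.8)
The plan is to read the statement off the definition of amenability of the measured groupoid $(\Gamma\ltimes X,\lambda_c,\mu)$ in \autoref{def:amen of measured grpd and measurewise amen}(1), combined with the explicit formulas for the $C_c(\Gamma\ltimes X)$--actions derived above, and to supply the single genuinely analytic ingredient via Tomiyama's theorem. By definition, (1) means exactly that there is a $\Gamma\ltimes X$--equivariant unital conditional expectation $P\colon L^\infty(\Gamma\times X,\bar\mu)\to L^\infty(X,\mu)$, i.e. $P(f\ast\xi)=f\ast P(\xi)$ for all $f\in C_c(\Gamma\ltimes X)$ together with $P|_{L^\infty(X)}=\mathrm{id}$. (The translation actions $\xi\mapsto g.\xi$ and $\xi_X\mapsto\xi_X^g$ are well defined on these $L^\infty$--spaces precisely because of the standing hypothesis $g.\mu\cong\mu$.) So the whole content is to show that, for a map into $L^\infty(X)$, full $C_c$--equivariance is equivalent to the single group--equivariance condition $P(g.\xi)=P(\xi)^g$ of (2), under the assumptions that $P$ is contractive and fixes $L^\infty(X)$.

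First I would reduce $C_c$--equivariance to its behaviour on generators. Since $\Gamma\ltimes X$ is an \etale groupoid and $X$ is compact, every $f\in C_c(\Gamma\ltimes X)$ is a finite sum $f=\sum_g f_g\ast\delta_g$ with $f_g\in CX$, where $\delta_g=1_{\{g\}\times X}$ and $CX$ sits inside $C_c(\Gamma\ltimes X)$ via $p\mapsto 1_{\{e\}}\otimes p$ as functions supported on the unit space. Using $(f\ast\xi)(k,x)=\sum_h f(h,x)\xi(h\inv k,h\inv.x)$ one checks $\delta_g\ast\xi=g.\xi$ and $p\ast\xi=p\cdot\xi$, and likewise $\delta_g\ast\xi_X=\xi_X^g$, $p\ast\xi_X=p\cdot\xi_X$ on $L^\infty(X)$. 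Testing $P(f\ast\xi)=f\ast P(\xi)$ at $f=\delta_g$ gives $P(g.\xi)=P(\xi)^g$, and at $f=p\in CX$ gives the module identity $P(p\cdot\xi)=p\cdot P(\xi)$. Conversely, expanding a general $f=\sum_g f_g\ast\delta_g$ and using these two facts recovers $P(f\ast\xi)=\sum_g f_g\cdot P(\xi)^g=f\ast P(\xi)$; the sums are finite because $f$ is compactly supported and $\Gamma$ is discrete, so no density argument is needed. Thus full $C_c$--equivariance is equivalent to group--equivariance together with $L^\infty(X)$--linearity.

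The hard part, and the only point where the $C^*$--structure is used, is to see that the $L^\infty(X)$--linearity $P(\xi_X\cdot\xi)=\xi_X\cdot P(\xi)$ is automatic once $P$ is contractive and $P|_{L^\infty(X)}=\mathrm{id}$. I would identify $L^\infty(X)$ with the unital von Neumann subalgebra $1_\Gamma\otimes L^\infty(X)\subseteq L^\infty(\Gamma\times X)$ and post--compose $P$ with this isometric inclusion $\iota$. Then $\iota\circ P$ is a linear map on the commutative $C^*$--algebra $L^\infty(\Gamma\times X)$ with $\|\iota\circ P\|\le 1$, range the $C^*$--subalgebra $1_\Gamma\otimes L^\infty(X)$, and $(\iota\circ P)|_{1_\Gamma\otimes L^\infty(X)}=\mathrm{id}$; in particular it is an idempotent of norm one. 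Tomiyama's theorem then asserts that any norm--one projection onto a $C^*$--subalgebra is a conditional expectation, hence positive and a bimodule map over its range, which is exactly $P(\xi_X\cdot\xi)=\xi_X\cdot P(\xi)$ after translating back through $\iota$. Taking $\xi_X=1_X$ also shows $P$ is automatically unital.

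Assembling the two directions is then immediate. For (1)$\Rightarrow$(2): a conditional expectation is contractive and fixes $L^\infty(X)$, and $C_c$--equivariance specializes at $f=\delta_g$ to $P(g.\xi)=P(\xi)^g$. For (2)$\Rightarrow$(1): contractivity together with $P|_{L^\infty(X)}=\mathrm{id}$ yields, by Tomiyama, positivity and the module property, so $P$ is a unital conditional expectation; combining the module property with the assumed group--equivariance gives, by the expansion above, full $C_c(\Gamma\ltimes X)$--equivariance, so $P$ witnesses amenability of $(\Gamma\ltimes X,\lambda_c,\mu)$. I expect the only subtlety to be the careful bookkeeping in identifying $L^\infty(X)$ with $1_\Gamma\otimes L^\infty(X)$ so that Tomiyama's hypotheses are literally met; everything else is an unwinding of definitions.
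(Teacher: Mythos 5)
Your proposal is correct and follows essentially the same route as the paper: reduce $C_c(\Gamma\ltimes X)$--equivariance to the group--equivariance condition together with $L^\infty(X)$--linearity (via the explicit formulas $\delta_g\ast\xi=g.\xi$, $p\ast\xi=p\cdot\xi$), and then obtain the $L^\infty(X)$--module property for free from contractivity and $P|_{L^\infty(X)}=\mathrm{id}$ by Tomiyama's theorem. Your writeup is in fact more detailed than the paper's (which only asserts the equivalence without the finite generator decomposition of $f\in C_c(\Gamma\ltimes X)$), and you correctly read the paper's ``$M(\xi)^g$'' as the typo it is for $P(\xi)^g$.
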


     \end{example}

\section{Application to Exact Groups}\label{sec:application_to_exact_groups}

     In this section, we investigate the cannonical group actions of $\Gamma \act \beta\Gamma$ as a special case. 
     Recall $C(\beta\Gamma) \cong \ell^\infty(\Gamma)$. 
     In this case, the algebra $A_0(\Gamma,\beta\Gamma)$ coincides with the uniform convolution algebra $\ell_u\Gamma$ 
     introduced in Definition 2.1. of \cite{Dougl10}. 

     We have the following as a special case of \autoref{thm:main} using Theorem 3. of \cite{Ozawa00}. 

     \begin{corollary} \label{cor:exact group}
     For a discrete group $\Gamma$, the following are equivalent: 
     \begin{enumerate}
          \item The group $\Gamma$ is exact. 
          \item There exists a compact Hausdorff $\Gamma$--space $X$ 
          such that $A_0(\Gamma,X)$ is right--$CX$--$\ell^1$--amenable. 
          \item The Banach algebra $A_0(\Gamma,\beta\Gamma)$ is right--$\ell^\infty(\Gamma)$--$\ell^1$--amenable. 
     \end{enumerate}
     \end{corollary}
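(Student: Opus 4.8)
The plan is to read this off as a combination of \autoref{thm:main} with Ozawa's characterization of exactness \cite{Ozawa00}, organizing the three conditions into the cycle (1)$\Rightarrow$(3)$\Rightarrow$(2)$\Rightarrow$(1). The only genuinely new ingredient beyond those two inputs is the functoriality of amenability of actions under equivariant pullback, which is needed to close the cycle.

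For (1)$\Rightarrow$(3) I would first invoke \cite{Ozawa00}: exactness of $\Gamma$ is equivalent to amenability of the action $\Gamma \act \beta\Gamma$. Applying \autoref{thm:main} to this action gives that $A_0(\Gamma,\beta\Gamma)$ is right--$C\beta\Gamma$--$\ell^1$--amenable, and since $C\beta\Gamma \cong \ell^\infty(\Gamma)$ as $C^*$--algebras (so the $CX$--module structure is exactly the $\ell^\infty(\Gamma)$--module structure), this is precisely statement (3). The implication (3)$\Rightarrow$(2) is then immediate: the space $X := \beta\Gamma$ is a compact Hausdorff $\Gamma$--space, and (3) is exactly the assertion of (2) for this particular $X$.

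The substantive step is (2)$\Rightarrow$(1). Suppose $A_0(\Gamma,X)$ is right--$CX$--$\ell^1$--amenable for some compact Hausdorff $\Gamma$--space $X$; by \autoref{thm:main} the action $\Gamma \act X$ is amenable. I would then transport amenability to $\beta\Gamma$ along an equivariant map. Fixing any $x_0 \in X$, the orbit map $\Gamma \ni g \mapsto g.x_0 \in X$ extends, by the universal property of the Stone--\v{C}ech compactification, to a continuous map $\varphi : \beta\Gamma \to X$; since $\varphi$ agrees with the equivariant orbit map on the dense subset $\Gamma \subset \beta\Gamma$ and both $w \mapsto g.\varphi(w)$ and $w \mapsto \varphi(g.w)$ are continuous, $\varphi$ is $\Gamma$--equivariant. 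If $(m_i)_i \subset C(X,Prob(\Gamma))$ witnesses amenability of $\Gamma \act X$ in the sense of \autoref{def:amenable action}, then $(m_i \circ \varphi)_i \subset C(\beta\Gamma, Prob(\Gamma))$ witnesses amenability of $\Gamma \act \beta\Gamma$, because equivariance of $\varphi$ yields
\[
\sup_{w \in \beta\Gamma} \left\| (m_i\circ\varphi)(g.w) - g.\big((m_i\circ\varphi)(w)\big) \right\|_1 \;\le\; \sup_{x\in X}\left\| m_i(g.x) - g.(m_i(x)) \right\|_1 \xrightarrow{\ i\ } 0 .
\]
Hence $\Gamma \act \beta\Gamma$ is amenable, and \cite{Ozawa00} then gives that $\Gamma$ is exact.

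The main obstacle is thus concentrated entirely in (2)$\Rightarrow$(1), and within it in the pullback argument establishing that an amenable action on an arbitrary compact $\Gamma$--space forces amenability of the canonical action on $\beta\Gamma$; the remaining implications are direct applications of \autoref{thm:main} together with the identification $C\beta\Gamma \cong \ell^\infty(\Gamma)$ and Ozawa's theorem.
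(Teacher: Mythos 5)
Your proposal is correct and is essentially the paper's own (implicit) argument: the paper presents the corollary as a direct combination of \autoref{thm:main} with Ozawa's theorem that $\Gamma$ is exact if and only if $\Gamma \act \beta\Gamma$ is amenable, which is exactly the cycle you run. The pullback step you spell out for (2)$\Rightarrow$(1) --- extending the orbit map to an equivariant $\varphi:\beta\Gamma\to X$ and composing the means $m_i\circ\varphi$ --- is the standard fact the paper implicitly invokes in calling the statement a ``special case,'' so you have simply made the omitted details explicit.
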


\section{Further Directions}\label{sec:to_further_results}

     The main theorem \autoref{thm:main} can be extended to the following cases. 
     \begin{enumerate}
     \item Actions of topological groups. 
     \item Actions on $C^*$-algebras. 
     \item Topological groupoids. 
     \end{enumerate}

     \subsection{Characterization by Approximate Diagonals} 
     
     \vspace{1mm} 

     Ordinary amenability of Banach algebras admits characterizations 
     in terms of bounded approximate or virtual diagonals. 

     Let $A\projten A$ denote the projective tensor product of the Banach algebra $A$. 
     It is naturally an $A$--$A$--bimodule, and 
     there exists the diagonal operator map 
     \[
          \Delta : A\projten A \ni a\otimes b \mapsto ab \in A \; .
     \]

     Moreover $(A\projten A)^{**}$ carries an $A$--$A$--bimodule structure, 
     and $\Delta^{**}: (A\projten A)^{**} \to A^{**}$ is defined accordingly. 

     \begin{theorem} \label{thm:approx diagonals} (Theorem 2.2.5. of \cite{Runde20})

     For a Banach algebra $A$, the following are equivalent: 
     \begin{enumerate}
     \item The Banach algebra $A$ is amenable. 

     \item There exists a bounded net $(d_i)_i \; \subset A\projten A$ with 
     \[
          a.d_i - d_i.a \xrightarrow{i} 0 \quad \text{and} \quad a \cdot  \Delta(d_i) \xrightarrow{i} a \; .
     \] 

     This $(d_i)_i$ is called a \textit{bounded approximate diagonal} for $A$. 

     \item There exists $ D \in (A\projten A)^{**}$ with 
     \[
          a.D - D.a = 0 \quad \text{and} \quad a. \Delta^{**} (D) \xrightarrow{i} a \; .
     \]

     This $D$ is called a \textit{virtual diagonal} for $A$. 

     \end{enumerate}
     \end{theorem}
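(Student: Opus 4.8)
The plan is to prove the cycle $(1)\Rightarrow(3)\Rightarrow(2)\Rightarrow(1)$, treating $(2)\Leftrightarrow(3)$ as the soft functional-analytic part and $(2)\Rightarrow(1)$ as the substantive one. Throughout I equip $A\projten A$ with its canonical $A$--$A$--bimodule structure $a.(b\otimes c).a':=ab\otimes ca'$, under which $\Delta$ is a bimodule morphism, and I use the induced dual structures on $(A\projten A)^*$ and $(A\projten A)^{**}$, so that $\Delta^{**}$ is weak*--continuous and bimodular and the bidual actions $\tau\mapsto a.\tau$, $\tau\mapsto\tau.a$ are weak*--continuous.

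First I would settle $(2)\Leftrightarrow(3)$. Given a bounded approximate diagonal $(d_i)_i$, Banach--Alaoglu furnishes a weak*--accumulation point $D\in(A\projten A)^{**}$; weak*--continuity of the bidual actions and of $\Delta^{**}$ then turns $a.d_i-d_i.a\xrightarrow{i}0$ and $a\cdot\Delta(d_i)\xrightarrow{i}a$ into the equalities $a.D-D.a=0$ and $a\cdot\Delta^{**}(D)=a$, so $D$ is a virtual diagonal. Conversely, from a virtual diagonal $D$, Goldstine's theorem gives a bounded net in $A\projten A$ converging weak* to $D$; for each finite family $a_1,\dots,a_n$ the vectors $(a_k.d-d.a_k)_k$ and $(a_k\cdot\Delta(d))_k$ then converge weakly to $0$ and to $a_k$, and a standard Mazur--lemma passage to convex combinations (indexed by pairs consisting of a finite family and an $\e>0$) upgrades weak to norm convergence, yielding a bounded approximate diagonal.

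For $(1)\Rightarrow(3)$ I would first invoke \autoref{lem:approximate unit}(2) to obtain a bounded approximate identity $(e_j)_j$, whose weak*--cluster point $E\in A^{**}$ is a mixed identity, $a.E=a=E.a$. Picking a weak*--cluster point $N\in(A\projten A)^{**}$ of the bounded net $(e_j\otimes e_j)_j$ gives $\Delta^{**}(N)=E$, although $N$ need not be central. To correct this, note that $a\mapsto a.N-N.a$ is a bounded derivation landing in $\ker\Delta^{**}$, since $\Delta^{**}(a.N-N.a)=a.E-E.a=0$. As a weak*--closed sub-bimodule of the dual bimodule $(A\projten A)^{**}$, the space $\ker\Delta^{**}$ is itself a dual $A$--bimodule, so amenability of $A$ forces this derivation to be inner: there is $N'\in\ker\Delta^{**}$ with $a.N-N.a=a.N'-N'.a$. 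Then $D:=N-N'$ is central and, as $\Delta^{**}(N')=0$, satisfies $\Delta^{**}(D)=E$, whence $a\cdot\Delta^{**}(D)=a$; thus $D$ is a virtual diagonal.

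The substantive direction is $(2)\Rightarrow(1)$. Let $E$ be an $A$--$A$--bimodule and $D:A\to E^*$ a bounded derivation. The map $\rho:A\projten A\to E^*$, $\rho(x\otimes y):=x.D(y)$, is bounded, and the derivation identity yields the key identity $\rho(a.d_i-d_i.a)=a.\rho(d_i)-\rho(d_i).a-\Delta(d_i).D(a)$. Taking $\psi$ to be a weak*--cluster point of the bounded net $(\rho(d_i))_i$, the left-hand side tends to $0$ (since $a.d_i-d_i.a\to0$ in norm and $\rho$ is bounded), while $\Delta(d_i).D(a)\to D(a)$; hence $D(a)=a.\psi-\psi.a=\mathrm{ad}_\psi(a)$, so $D$ is inner. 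I expect the \textbf{main obstacle} to be precisely the step $\Delta(d_i).D(a)\to D(a)$: the net $(\Delta(d_i))_i$ is only a bounded approximate identity, and it acts as an approximate identity on $E^*$ (in the weak* sense needed here) only after reducing to a neo--unital, or pseudo--unital, module, which requires a Cohen factorization argument together with care about whether convergence holds in norm or merely weak*. Carrying out this reduction cleanly, and checking that the chosen weak*--cluster point is compatible with it, is the delicate heart of the argument.
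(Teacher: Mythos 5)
First, a point of reference: the paper never proves \autoref{thm:approx diagonals}; it is quoted from Runde's book \cite{Runde20} as background for Section 9, so your proposal can only be measured against the standard textbook proof (Johnson/Runde) --- which is in fact exactly the route you follow: $(1)\Rightarrow(3)$ via a weak*--cluster point $N$ of $(e_j\otimes e_j)_j$ and innerness of $a\mapsto a.N-N.a$ in the dual bimodule $\ker\Delta^{**}$, $(2)\Leftrightarrow(3)$ via Goldstine plus Mazur, and $(2)\Rightarrow(1)$ via the map $\rho(x\otimes y)=x.D(y)$ and the identity $\rho(a.d_i-d_i.a)=a.\rho(d_i)-\rho(d_i).a-\Delta(d_i).D(a)$. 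These parts are essentially correct. The only imprecision in $(1)\Rightarrow(3)$ is the claim $\Delta^{**}(N)=E$: along the chosen subnet, $\Delta^{**}(N)$ is a weak*--cluster point of $(e_j^2)_j$, not necessarily of $(e_j)_j$; this is harmless, since any such cluster point is again a mixed identity, which is all the argument uses.

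The genuine gap is the one you flag yourself but do not close: in $(2)\Rightarrow(1)$, the convergence $\Delta(d_i).D(a)\to D(a)$ in the weak*--topology of $E^*$ requires $\|v.\Delta(d_i)- v\|\to 0$ for every $v\in E$, i.e., that $E$ be neo--unital (essential) as a right module, and this fails for general bimodules. Concretely, if the right action of $A$ on $E$ is zero, then $v.\Delta(d_i)=0$ identically, so $\Delta(d_i).D(a)=0$, and your identity only yields $a.\psi-\psi.a=0$ in the weak* limit --- no innerness of $D$. (That degenerate case is in fact easy to treat directly with a bounded approximate identity, but it must be treated.) The reduction to neo--unital modules --- decompose $E$ using Cohen factorization, handle the degenerate pieces separately, and check the cluster point is taken in the right dual --- is Runde's Proposition 2.1.5 and is a genuinely necessary, nontrivial step, not a technicality one may ``expect'' to go through; your proposal stops exactly where it begins. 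As submitted, $(1)\Rightarrow(3)\Rightarrow(2)$ is complete, while $(2)\Rightarrow(1)$ is an accurate plan whose decisive step is missing.
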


     Hence define the amenability constant by 
     \[
     \text{AM}(A) := \inf\{ \sup_i d_i \mid (d_i)_i \text{ is a bounded approximate diagonal for }A \} \; .
     \]

     \vspace{2mm} 
     In the proof of \autoref{thm:approx diagonals} (1)$\Rightarrow$(3), 
     it is essential that a certain derivation $\Psi: A \to  _A \ker\Delta^{**} _A$ is inner. 
     However, for $A_0(\Gamma,X)$, 
     the module $A_0(\Gamma,X)\projten A_0(\Gamma,X)$ is not always right--$CX$--$\ell^1$--geometric, 
     and the same statement holds for $(A_0\projten A_0)^{**}$ and $\ker\Delta^{**}$. 

     This complicates any attempt to characterize amenability of $\Gamma \act X$ via approximate diagonals.  

\section{Acknowledgements}\label{sec:acknowledgements}

     The author is grateful to Narutaka Ozawa for his invaluable advice throughout this research and for teaching the proof techniques used in Sections 3 and 6.
     The author would also like to thank his advisor, Yasuyuki Kawahigashi, for his support and guidance in various aspects of this work. 
     In addition, the author appreciates the stimulating discussions with colleagues Ikhan Choi, Ayoub Hafid, and Miho Mukohara, as well as his friend Ken Sato --all of which greatly contributed to this research.

\bibliographystyle{plainurl} 
\bibliography{banachalg} 

\end{document}